\newtheorem{theorem}{Theorem}
\newtheorem{definition}[theorem]{Definition}
\newtheorem{proposition}[theorem]{Proposition}
\newtheorem{corollary}[theorem]{Corollary}
\newtheorem{lemma}[theorem]{Lemma}
\newtheorem{remark}[theorem]{Remark}
\newcommand{\RR}{\mathbb{R}}
\newcommand{\eps}{\epsilon}
\newcommand{\Id}{\mathrm{id}}
\newcommand{\lip}{\mathrm{Lip}\,}
\newcommand{\co}{\overline{\mathrm{co}}\,}
\newcommand{\un}{\mathbf{1}}
\newcommand{\R}{{\mathbb {R}}}
\newcommand{\la}{\left\langle}
\newcommand{\ra}{\right\rangle}
\newcommand{\q}{q}
\newcommand{\M}{\mathcal M}
\renewcommand{\S}{\mathcal S}
\renewcommand{\l}{L}
\newcommand{\f}{\frac}
\newcommand{\lb}{\lambda}
\newcommand{\al}{\alpha}
\DeclareMathOperator*{\argmax}{arg\,max}
\newcommand{\beq}{\begin{equation}}
\newcommand{\eeq}{\end{equation}}
\newcommand{\vc}[1]{{#1}}
\def\moverlay{\mathpalette\mov@rlay}
\def\mov@rlay#1#2{\leavevmode\vtop{%
   \baselineskip\z@skip \lineskiplimit-\maxdimen
   \ialign{\hfil$\m@th#1##$\hfil\cr#2\crcr}}}
\newcommand{\charfusion}[3][\mathord]{
    #1{\ifx#1\mathop\vphantom{#2}\fi
        \mathpalette\mov@rlay{#2\cr#3}
      }
    \ifx#1\mathop\expandafter\displaylimits\fi}
\newcommand{\cupdot}{\charfusion[\mathbin]{\cup}{\cdot}}
\title{Non-linear eigenvalue problems arising from growth maximization of positive linear dynamical systems}
\title{Non-linear eigenvalue problems arising from growth maximization of positive linear dynamical systems}
\author{Vincent Calvez$^1$ \ \and Pierre Gabriel$^2$ \ \and St\'ephane Gaubert$^3$
\thanks{$^{1}$ Ecole Normale Sup\'erieure de Lyon, 
UMR CNRS 5669 'UMPA', and project-team Inria NUMED, 46, all\'ee d'Italie 
69364 Lyon Cedex 07,
France.
E-mail: {\tt\footnotesize vincent.calvez@ens-lyon.fr}}%
\thanks{$^{2}$ Laboratoire de Math\'ematiques de Versailles
Universit\'e de Versailles St-Quentin-en-Yvelines, 
45 avenue des \'Etats-Unis, 78035 Versailles Cedex, France.
E-mail: {\tt\footnotesize pierre.gabriel@uvsq.fr }}%
\thanks{$^{3}$ Inria and CMAP, UMR CNRS 7641, Ecole Polytechnique, Route de Saclay, 91128 Palaiseau Cedex, France.
E-mail: {\tt\footnotesize stephane.gaubert@inria.fr}}%
}
\begin{document}

\maketitle

\begin{abstract}
We study a growth maximization problem for a continuous time positive
linear system with switches. This is motivated by a problem of
mathematical biology (modeling growth-fragmentation processes and the PMCA protocol).  We show that the growth rate is
determined by the non-linear eigenvalue of a max-plus analogue of the
Ruelle-Perron-Frobenius operator, or equivalently, by the ergodic
constant of a Hamilton-Jacobi (HJ) partial differential equation,
the  solutions or subsolutions of which yield Barabanov and extremal norms,
respectively. We
exploit contraction properties of order preserving flows, with respect to Hilbert's projective metric, to show that the non-linear eigenvector of the
operator, or the ``weak KAM'' solution of the HJ equation, does
exist. Low dimensional examples are presented, showing that the optimal control can lead to
a limit cycle.
\end{abstract}

\section{Introduction}



We investigate in this note the optimal control of time continuous positive linear dynamical systems in infinite horizon. 
We wish to compute the maximal growth rate
that can be obtained from infinitesimal combinations of a set of nonnegative matrices. 

More precisely, we consider a compact set $\M\subset \mathcal M_n(\R)$ 
of irreducible Metzler matrices. 
That is to say, we assume that for all $m\in \mathcal M$ and for all $i\neq j$,  $m_{ij}\geq 0$. 
In addition for every partition of indices $\{ 1\dots n \} = \vc{I \cupdot J}$ one can pick $i\in I$ and $j\in J$ such that $m_{ij} >0$. 
A direct consequence of compactness is {\it uniform irreducibility}: there exists a constant $\nu>0$ such that for all $m\in \M$, and every partition of indices one can pick $i\in I$ and $j\in J$ such that $m_{ij} \geq \nu$.

%
%
Let $K$ be the nonnegative orthant in $\R^n$, $K_+$ the positive orthant, and $K_0=K\setminus\{0\}$.
For $t>0$, $x\in K$ and a measurable control function $M:[0,t]\to \M$, we define $x_M\in W^{1,\infty}([0,t], \R^n)$ as the solution of the following linear problem with control $M$:
\begin{equation} \begin{cases}
\dot x_M(s) = M(s) x_M(s),\\
x_M(0) = x\,.
\end{cases}
\label{eq:ODE}
\end{equation}
We also denote $x_M(s) = R(s,M)x$, where $R$ is the resolvent. Finally \vc{we denote in short $L^\infty(0,t)$} the set of measurable (bounded by assumption) control functions $M:[0,t]\to \M$. We are interested in control functions maximizing the growth rate
\begin{align}\label{e-growth}
\limsup_{t\to \infty} \frac{1}{t}\log \|x_M(t)\| \enspace .
\end{align}

We assume w.l.o.g. that $\M$ is convex. The results presented here are still valid for nonconvex sets $\M$, provided the controls are replaced by  relaxed controls which take values in the closed convex hull $\co(\M)$.

For a constant control $M(s) \equiv m$ we have $R(t,m) = e^{tm}$. It is an immediate consequence of the Perron-Frobenius theorem that, being \vc{$\phi_m\in K_+$} a left Perron-Frobenius (PF) eigenvector of $m$, the linear function $\overline v(x) =  \la \phi_m,x\ra $ satisfies the following identity,
\[ (\forall t\in \RR_+)\; (\forall x\in K)\quad e^{\lambda(m) t}\overline v(x) =   \overline v(x_m(t))  \,,  \]
where $\lambda(m) \in \RR$ is the dominant eigenvalue of $m$.
The following result can be thought of as a non-linear extension of the Perron-Frobenius theorem. 

\begin{theorem}\label{thm:optimal growth}
Under previous assumptions there exist a real \vc{$\lambda(\mathcal M)$} and a function $\overline v: K\to \R_+$, homogeneous of degree 1, \vc{positive on $K_0$,} globally Lipschitz continuous,
which satisfy the following identity
\begin{equation}
(\forall t\in \RR_+)\; (\forall x\in K)\quad e^{\vc{\lambda(\M)} t} \overline{v}(x) =  \sup_{M\in L^\infty(0,t)}  \overline{v}(x_M(t)) \, ,
\label{eq:fixed point HJ}
\end{equation}
The scalar $\lambda(\mathcal M)$ is unique as soon as $\overline v$ belongs to the class of homogeneous functions of degree 1 which are locally bounded on $K$,
and it determines the optimal growth rate~\eqref{e-growth}.
Moreover, $\overline{u}=\log \overline{v}$ is 
characterized as a viscosity solution 
of an ergodic Hamilton-Jacobi PDE:
\begin{equation}
- \lambda(\mathcal M) +  H(D_y\overline u(y) , y ) = 0 \, , \vc{\quad y\in \S}\, ,
\label{eq:ergodic HJ:intro}
\end{equation}
where $\S$ is the standard simplex.
\end{theorem}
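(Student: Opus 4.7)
The plan is to recast~\eqref{eq:fixed point HJ} as a nonlinear eigenvalue problem for the max-plus evolution semigroup
\begin{equation*}
(T_t v)(x) := \sup_{M \in L^\infty(0,t)} v\bigl(R(t,M)x\bigr), \qquad t\ge 0,
\end{equation*}
acting on the cone $\mathcal C$ of positively $1$-homogeneous continuous functions $v\colon K\to\RR_+$ that are strictly positive on $K_0$. The operator $T_t$ inherits positive $1$-homogeneity and monotonicity (w.r.t.\ the pointwise order) from its building blocks $v\mapsto v\circ R(t,M)$, and the dynamic programming principle gives $T_{s+t}=T_s\circ T_t$. Solving \eqref{eq:fixed point HJ} then amounts to producing a common nonlinear eigenvector of the family $\{T_t\}_{t\ge 0}$ with eigenvalue $e^{\lambda(\mathcal M)t}$.

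Existence will be obtained by a fixed-point argument on the projective quotient of $\mathcal C$ equipped with Hilbert's projective metric
\begin{equation*}
d_H(v,w)=\log\sup_{x,y\in K_0}\frac{v(x)\,w(y)}{v(y)\,w(x)}.
\end{equation*}
Every monotone, positively $1$-homogeneous map is automatically non-expansive for $d_H$. To upgrade this to a genuine contraction I would use the \emph{uniform} irreducibility hypothesis: by concatenating, for each ordered pair of indices $(i,j)$, an elementary trajectory transferring mass at a rate at least $\nu$, one produces some $t_0>0$ and constants $0<c<C$ such that every suitably normalized $v\in\mathcal C$ satisfies $c\,\|x\|\le (T_{t_0}v)(x)\le C\,\|x\|$ on $K$. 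Hence $T_{t_0}\mathcal C$ lies in a $d_H$-bounded, equi-Lipschitz subset, and the nonlinear Birkhoff--Hopf theorem (valid for monotone, positively $1$-homogeneous maps, in the spirit of Nussbaum) promotes non-expansiveness to strict contraction on this invariant set. A Banach-style argument then produces $\overline v\in\mathcal C$ and $\Lambda>0$ with $T_{t_0}\overline v=\Lambda\,\overline v$; the semigroup relation forces $\Lambda=e^{\lambda(\mathcal M)t_0}$ for a unique real $\lambda(\mathcal M)$, and the two-sided bound transfers to $\overline v$, yielding positivity on $K_0$ and global Lipschitz continuity.

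Uniqueness of $\lambda(\mathcal M)$ within the class of positively $1$-homogeneous, $K_0$-positive, locally bounded functions follows by a sandwich argument: any such competitor is trapped between two multiples of $\overline v$ after time $t_0$, forcing its eigenvalue to agree with $\lambda(\mathcal M)$, and the same comparison identifies $\lambda(\mathcal M)$ with the optimal growth rate~\eqref{e-growth}. For the Hamilton--Jacobi characterization, setting $\overline u=\log\overline v$ and using $1$-homogeneity to descend to the simplex $\S$, the relation~\eqref{eq:fixed point HJ} becomes the dynamic programming principle for an infinite-horizon control problem whose Hamiltonian $H$ is the Legendre-type dual of the projected vector field $m\in\mathcal M\mapsto my-\langle\un,my\rangle y$, shifted by the instantaneous growth $\langle\un,my\rangle$; standard viscosity techniques for Bellman equations then yield~\eqref{eq:ergodic HJ:intro}. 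The main obstacle is the contraction step itself: because the supremum over controls makes $T_t$ genuinely nonlinear, the classical Birkhoff theorem for linear cone maps does not apply, and one must carefully combine the trajectory-level consequence of uniform irreducibility with its nonlinear Birkhoff--Hopf counterpart to secure both the existence of a fixed point and the Lipschitz regularity of $\overline v$.
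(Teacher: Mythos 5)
Your construction stalls at its central step: the ``nonlinear Birkhoff--Hopf theorem'' you invoke does not exist in the generality you need. The operator $T_t v=\sup_M v\circ R(t,M)$ is a Bellman (max-plus linear) operator on the function cone, and for such monotone, $1$-homogeneous maps a $d_H$-bounded image does \emph{not} upgrade non-expansiveness to strict contraction. A two-state example already kills the claim: on $v\in\mathbb{R}_{+}^2$ let $(Tv)(1)=\max(v(1),v(2))$, $(Tv)(2)=\max(v(1),2v(2))$; its image has Hilbert diameter $\le\log 2$, yet $d_H(Tv,Tw)=d_H(v,w)$ for $v=(1,1)$, $w=(2,1)$. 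This is not an accident: a Banach fixed-point argument would give uniqueness of the eigenfunction $\overline v$ up to scaling, which is precisely what one cannot expect in this weak KAM setting (the paper only claims uniqueness of $\lambda(\M)$; distinct solutions coexist whenever the Aubry set is not a single orbit). Moreover, even your preliminary claim that $T_{t_0}\mathcal C$ is $d_H$-bounded is suspect, because it would require a uniform overlap of the reachable sets from different initial states, i.e.\ a controllability property that the paper explicitly points out is lacking here (hence Arisawa-type ergodic sets, not controllability, drive the analysis). Finally, producing a common eigenvector of the whole semigroup from an eigenvector of a single $T_{t_0}$ would also need an argument, though that is minor by comparison.

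The paper's proof puts the Birkhoff--Hopf theorem where it actually applies: to the \emph{linear} flow maps $R(t,M)$ on the state cone $K\subset\R^n$. A compactness/irreducibility lemma shows that after time $\tau$ the flow sends $K$ into a subcone $K_\tau$ of the interior that is bounded in Hilbert's projective metric and invariant; Birkhoff--Hopf plus a subadditivity argument then gives uniform exponential contraction of trajectories, $d(R(t,M)x,R(t,M)y)\le e^{-\mu t}d(x,y)$ for $t\ge T$. Existence of $(\lambda,\mathring u)$ is then obtained by the Lions--Papanicolaou--Varadhan vanishing-discount scheme on the simplex: the trajectory-level contraction yields equi-Lipschitz bounds (in Hilbert's metric) for the discounted value functions $u_\eps$, so $\eps u_\eps\to\lambda$ and $u_\eps-\min u_\eps$ converges to a solution of the ergodic Hamilton--Jacobi equation; a separate argument upgrades the regularity of $\mathring u$ to global $\ell^1$-Lipschitz up to $\partial\S$, and uniqueness of $\lambda$ (and the identification with the growth rate) follows via calibrated trajectories. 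If you want to salvage a fixed-point-on-functions strategy, you would need the genuinely different machinery of infinite-dimensional max-plus spectral theory (kernel continuity or quasi-compactness conditions, as in Kolokoltsov--Maslov or Mallet-Paret--Nussbaum), which the paper mentions as an open alternative rather than a contraction argument.
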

The Hamiltonian $H$ will be given in Section~\ref{sec-sketch}.

\begin{corollary}[Ergodicity] \label{cor:ergodicity}
Let $v_0: K\to \R_+$ be a continuous function, homogeneous of degree 1, positive on $K_0$. Define
$v(t,x) = \sup_{M\in L^\infty(0,t)} v_0(x_M(t))$.
Then we have the following ergodicity result,
\[ (\forall x\in \vc{K_0})\quad  \lim_{t\to +\infty} \frac1t  \log (v(t,x))  = \lambda(\M)\, . \ \]
Moreover the convergence is locally uniform on $K_0$.
\end{corollary}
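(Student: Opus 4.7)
The plan is to sandwich $v_0$ between two multiples of the Barabanov-type function $\overline v$ produced by Theorem~\ref{thm:optimal growth}, and then to transport this sandwich through the supremum to obtain the ergodic limit.

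First I would exploit the fact that both $v_0$ and $\overline v$ are continuous on $K$, homogeneous of degree $1$, and strictly positive on $K_0$. Restricting to the standard simplex $\S$, which is compact and contained in $K_0$, the quotient $v_0/\overline v$ is a continuous positive function on $\S$, hence bounded above and below by positive constants $0<c\le C<\infty$. By homogeneity of degree $1$, the double inequality $c\,\overline v(x)\le v_0(x)\le C\,\overline v(x)$ then extends to all $x\in K$.

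Next I would plug this sandwich into the definition of $v(t,x)$ and use the fixed-point identity \eqref{eq:fixed point HJ}. Since $\sup$ is monotone with respect to pointwise inequalities between nonnegative functions,
\begin{equation*}
c\, e^{\lambda(\M)t}\,\overline v(x)
= c\sup_{M} \overline v(x_M(t))
\le v(t,x)
\le C\sup_{M} \overline v(x_M(t))
= C\, e^{\lambda(\M)t}\,\overline v(x).
\end{equation*}
Taking logarithms, dividing by $t$, and sending $t\to\infty$, the bounds $(\log c + \lambda(\M)t + \log\overline v(x))/t$ and $(\log C + \lambda(\M)t + \log\overline v(x))/t$ both tend to $\lambda(\M)$ for every fixed $x\in K_0$ (using that $\overline v(x)>0$ there). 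This gives the pointwise ergodic limit.

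For the local uniform statement, I would first observe that $v(t,\cdot)$ is itself homogeneous of degree $1$, as a consequence of the linearity of \eqref{eq:ODE} in the initial data and the homogeneity of $v_0$. Consequently, $\frac{1}{t}\log v(t,\alpha x) = \frac{1}{t}\log v(t,x) + \frac{\log\alpha}{t}$, so it suffices to establish uniform convergence on the simplex $\S$. But on $\S$ the function $\log \overline v$ is bounded (since $\overline v$ is continuous and positive on the compact $\S$), so the two sandwich bounds converge to $\lambda(\M)$ uniformly in $x\in\S$. Rescaling gives uniform convergence on every compact subset of $K_0$. There is no real obstacle here beyond checking the global Lipschitz/positivity properties already provided by Theorem~\ref{thm:optimal growth}; the argument is essentially the fact that, for operators with spectral gap in the Hilbert metric sense, the eigenfunction dominates the asymptotics of any positive initial datum.
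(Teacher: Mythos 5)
Your argument is correct, but it takes a different route from the one the authors had in mind. You lean on the full conclusion of Theorem~\ref{thm:optimal growth}: since $\overline v$ is continuous, $1$-homogeneous and positive on $K_0$, the quotient $v_0/\overline v$ is pinched between constants $c,C>0$ on the compact simplex $\S$, the bound extends to $K$ by homogeneity (both functions vanish at the origin), and then the monotonicity of the supremum together with the exact identity \eqref{eq:fixed point HJ} gives $c\,e^{\lambda(\M)t}\overline v(x)\le v(t,x)\le C\,e^{\lambda(\M)t}\overline v(x)$; this yields the limit with an explicit $O(1/t)$ error which is uniform on compact subsets of $K_0$ because $\log\overline v$ (or, after your reduction by homogeneity, $\log\overline v$ on $\S$) is bounded there. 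The paper's own (omitted) proof instead projects onto the simplex via $\log v(t,x)=\log\langle\un,x\rangle+w(t,y)$, uses the dynamic programming representation of $w$ with running reward $\l$, notes that the terminal term $w_0(y_M(t))$ stays bounded since trajectories enter the compact set $K_1\cap\S$, and then invokes the uniform convergence of the discounted values $\eps u_\eps\to\lambda(\M)$ (an intermediate step of the proof of Theorem~\ref{thm:optimal growth}, obtained from the Hilbert-metric contraction) through an Abelian/Tauberian comparison of discounted and finite-horizon averages. Your eigenfunction-sandwich argument is shorter and gives a quantitative rate, at the price of using the existence of the genuine eigenvector $\overline v$; the authors' route only needs the uniform vanishing-discount limit and the invariance of $K_1$, so it would survive in settings where one controls $\eps u_\eps$ but cannot (or does not wish to) produce the eigenfunction itself. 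Both proofs are valid derivations of Corollary~\ref{cor:ergodicity} from the material of Section~\ref{sec-sketch}.
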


Theorem~\ref{thm:optimal growth} is closely related to results belonging to the theory of stability of linear inclusions.
There, matrices are not necessarily assumed to be Metzler matrices.
The non-linear eigenvalue $\lambda({\mathcal M})$
coincides with the joint spectral radius \cite{rota_note_1960}. In his seminal paper \cite{barabanov_absolute_1988}, Barabanov proved the existence of extremal norms in $\R^n$ which saturates \eqref{eq:fixed point HJ},
under a different irreducibility condition.
Later the same author investigated the behaviour of extremal trajectories in the three-dimensional case $n=3$, first when $\M$ has the specific structure of a segment with a rank one matrice for direction \cite{barabanov_ui_1993}, secondly under a uniqueness condition for extremal trajectories verifying the Pontryagin Maximum Principle (PMP) \cite{barabanov_asymptotic_2008} (see also the recent improvement by Gaye et al \cite{gaye_properties_2013}). 
We also refer to \cite{wirth_generalized_2002} for an alternative proof of the existence of Barabanov extremal norms, and to the work of Chitour, Mason and Sigalotti~\cite{chitour} for the analysis of situations in which there are obstructions to the existence of such norms.

Several authors have analyzed specially the stability of positive linear systems. Very recently, Mason and Wirth~\cite{mason_extremal_2014} have established the existence of
an extremal norm, that is, a viscosity subsolution of the spectral problem~\eqref{eq:fixed point HJ} (the equality relation being replaced by $\geq$), corresponding
to a critical subsolution of the ergodic Hamilton-Jacobi equation.
They use an irreducibility condition which is milder than our,
but which does not guarantee the existence of a viscosity solution.
Conditions for the existence of subsolutions are typically
less restrictive. It is an interesting
issue to see whether the assumptions of Theorem~\ref{thm:optimal growth} could
be relaxed.

We emphasize that we take advantage of an illuminating connection between problem \eqref{eq:fixed point HJ} and the weak KAM theory in Lagrangian dynamics \cite{Fathi-book}. In particular long-time dynamics of optimal trajectories appear to be encoded in the so-called Aubry sets.
Such eigenproblems have
been widely studied in ergodic control, and also by dynamicians
in the setting of the weak KAM theory, where the eigenfunction
is known as a weak KAM solution. 
However, basic existence
results for eigenvectors 
rely on controllability conditions which are not satisfied
in our setting.

We exploit tools from the theory of Hamilton-Jacobi PDE to prove Theorem \ref{thm:optimal growth}, combined with techniques from Perron-Frobenius theory. In particular,
we use the Birkhoff-Hopf theorem in a crucial way.
The latter states that a linear map leaving invariant the
interior of a closed, convex and pointed cone is a strict contraction
in Hilbert's projective metric. The contraction of the controlled
flow turns out to entail the existence of the eigenvector. 
We note that tools from Lagrangian dynamics
(Mather sets) have been recently applied by Morris to study joint spectral radii~\cite{morris}. This deserves to be further studied in the present setting.

The same type of equations has been studied
in the context of infinite dimensional max-plus spectral theory.
In particular, the existence of continuous eigenfunctions
for max-plus operators with a continuous kernel is established
in~\cite{maslovkolokoltsov95}. More general conditions, exploiting
quasi-compactness techniques, can be found 
in~\cite{Nuss-Mallet}. It would be interesting to see
whether such techniques to the present problems.


A natural question that arises in the literature is whether the knowledge of $\{\lambda(m)\}_{m\in \M}$, say $(\forall m \in \M)\; \lambda(m)<0$ guarantees the stability of the differential inclusion \eqref{eq:ODE}. A positive answer has been given in \cite{gurvits_stability_2007} in dimension $n=2$. A negative answer has been given in (possibly) high dimension in the same work. Soon after, Fainshil et al give a counter-example in dimension $n=3$ \cite{fainshil_stability_2009}. It is a pair of matrices such that every convex combination has a negative spectral radius but the associated joint spectral radius is positive.

We address similar questions in the present note, namely whether $\lambda(\M) = \max_m  \lambda(m)$ or $\lambda(\M) > \max_m  \lambda(m)$. We give a new and self-contained proof of the positive answer in dimension $n=2$. We also give three dimensional numerical examples with positive and negative answers. The case where $\lambda(\M) > \max_m \lambda(m)$ is of particular interest. To find such a numerical example we restrict to the case where $\M$ is a segment, and the maximum of $\lambda(m)$ is attained at an interior point. We investigate  periodic perturbations of the optimal constant control in the spirit of \cite{Clairambault,Lepoutre}. More precisely we compute the second order directional derivative of the Floquet eigenvalue. We derive a criterion about the local optimality of the constant control with respect to periodic perturbations. We exhibit a numerical example for which this condition is satisfied. Numerical simulations of the full optimal control problem clearly shows the convergence of the optimal trajectory towards a limit cycle, suggesting that the optimal control in infinite horizon is indeed a BANG-BANG periodic control. It is worth noticing that the criterion that we derive is the exact opposite of a so-called Legendre condition in geometric optimal control theory  \cite{AgrachevSachkov,BonnardCaillauTrelat}. The latter condition ensures the local optimality of the extremal trajectory (here the trajectory corresponding to the maximal Perron eigenvalue) for short times.


\if{
In Section~\ref{sec-qualitative}, we investigate the qualitative properties of the nonlinear eigenvalue $\lambda(\mathcal M).$
First we prove that in dimension 2, this value corresponds to the maximal Perron eigenvalue among the matrices $m\in\mathcal M.$
Then we investigate in higher dimensions the possible situations where the optimal growth rate is not provided by a constant control. Using the idea of comparing Floquet and Perron eigenvalues, as it is done in [Clairambault-Gaubert-Perthame, Clairambault-Gaubert-Lepoutre] to compare the growth rates of cell populations in the context of cyrcadian rhythms, we make periodic variations of the control around the matrix with the optimal Perron eigenvalue. The computation of the directional second derivative of the Floquet eigenvalue makes appear a sufficient condition to ensure that $\lambda(\mathcal M)$ is strictly larger than the maximal Perron eigenvalue of $\mathcal M.$ We exhibit numerical examples in dimension 3 for which this condition is satisfied, leading to the convergence of the optimal trajectory toward a limit cycle.
Surprisingly enough, we remark that the condition on the second derivative of the Floquet eigenvalue is the exact opposite of a so-called Legendre condition in the geometric theory of optimal control (see~\cite{AgrachevSachkov,BonnardCaillauTrelat} for recent works on this theory). The latter condition ensures the local optimality of the best constant trajectory (i.e. the trajectory corresponding to the maximal Perron eigenvalue) for short times, while the former relies on the asymptotic behaviour for large time.}\fi

%
%
\if{
Let $K$ be the nonnegative orthant in $\R^n$, $K_+$ the positive orthant, and $K_0=K\setminus\{0\}$. 
For $t>0$, $x\in K$ and a measurable control function $M:[0,t]\to \M$, we define $x_M\in W^{1,\infty}([0,t], \R^n)$  the solution of the following linear problem with control $M$:
\begin{equation} \begin{cases}
\dot x_M(s) = M(s) x_M(s),\\
x_M(0) = x\,.
\end{cases}
\label{eq:ODE}
\end{equation}
We also denote $x_M(s) = R(s,M)x$, where $R$ is the resolvent. Finally \vc{we denote in short $L^\infty(0,t)$} the set of measurable (bounded by assumption) control functions $M:[0,t]\to \M$. 

We assume w.l.o.g. that $\M$ is convex. The results presented here are still valid for nonconvex sets $\M$, provided the controls are replaced by  relaxed controls which take values in the closed convex hull $\co(\M)$.

For a constant control $M(s) \equiv m$ we have $R(t,m) = e^{tm}$. It is an immediate consequence of the Perron-Frobenius theorem that, being \vc{$\phi_m\in K_+$} a left Perron-Frobenius (PF) eigenvector of $m$, the linear function $\overline v(x) =  \la \phi_m,x\ra $ satisfies the following identity,
\[ (\forall t\in \RR_+)\; (\forall x\in K)\quad e^{\lambda(m) t}\overline v(x) =   \overline v(x_m(t))  \,,  \]
where $\lambda(m) \in \RR$ is the dominant eigenvalue of $m$. 
The following result is a non-linear extension of the Perron-Frobenius theorem for optimal control in infinite horizon.

\begin{theorem}\label{thm:optimal growth}
Under previous assumptions there exist a real \vc{$\lambda(\mathcal M)$} and a function $\overline v: K\to \R_+$, homogeneous of degree 1, \vc{positive on $K_0$,} globally Lipschitz continuous,
which satisfies the following identity 
\begin{equation} 
(\forall t\in \RR_+)\; (\forall x\in K)\quad e^{\vc{\lambda(\M)} t} \overline{v}(x) =  \sup_{M\in L^\infty(0,t)}  \overline{v}(x_M(t)) \, , 
\label{eq:fixed point HJ}
\end{equation}
Moreover $\lambda(\mathcal M)$ is unique as soon as $\overline v$ belongs to the class of homogeneous functions of degree 1 which are locally bounded on $K$.
\end{theorem}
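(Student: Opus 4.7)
The plan is to realize $\overline v$ as a non-linear eigenvector of the Bellman semigroup
\[
T_t[w](x) := \sup_{M\in L^\infty(0,t)} w(x_M(t)), \qquad t\geq 0,
\]
acting on the cone $\mathcal C$ of continuous functions $w\colon K\to \R_+$ that are positively homogeneous of degree $1$ and strictly positive on $K_0$. Since $T_{t+s} = T_t\circ T_s$ and each $T_t$ is positively homogeneous of degree $1$, it is enough to find one $t_0>0$ and one $\overline v\in\mathcal C$ with $T_{t_0}[\overline v] = \mu_0\,\overline v$ for some $\mu_0>0$; the eigenvalue at other times is then forced by the semigroup law to be $e^{\lambda(\M)t}$ with $\lambda(\M) = (\log\mu_0)/t_0$.

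The key tool is Hilbert's projective metric $d_H$. Each linear resolvent $R(t, M)$, being a positive linear map, is a $d_H$-non-expansion; by the Birkhoff--Hopf theorem it is a strict contraction as soon as $R(t,M)K_0 \subset K_+$ with finite projective diameter. Uniform irreducibility and compactness of $\M$ produce a time $t_0>0$ and a constant $\nu'>0$ such that every entry of $R(t_0, M)$ is at least $\nu'$ uniformly in $M\in L^\infty(0,t_0)$; Birkhoff's explicit formula then yields a uniform Hilbert contraction ratio $c<1$ for all pullbacks $w\mapsto w\circ R(t_0, M)$. The supremum over $M$ preserves this: a supremum of order-preserving, positively homogeneous $c$-contractions in $d_H$ is again an order-preserving, positively homogeneous $c$-contraction on $\mathcal C$. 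Hence $T_{t_0}$ is a strict $d_H$-contraction.

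Passing to the quotient of $\mathcal C$ by positive multiplicative constants yields a genuine metric space, and completeness follows from compactness of the standard simplex $\S$ together with the fact that $d_H$ controls sup-norm ratios of representatives normalized on $\S$. The Banach contraction principle then supplies a unique fixed point $[\overline v]$, and lifting gives $\overline v\in\mathcal C$ together with $\lambda(\M)$ satisfying~\eqref{eq:fixed point HJ}. Global Lipschitz continuity and positivity on all of $K_0$ follow from the $d_H$-contraction combined with uniform equicontinuity of the family $\{x\mapsto x_M(t_0)\}_M$ and homogeneity, which extends all estimates from $\S$ to $K$. The Hamilton--Jacobi characterization~\eqref{eq:ergodic HJ:intro} is then obtained by setting $\overline u = \log\overline v$ and passing to the viscosity limit $t\to 0^+$ in~\eqref{eq:fixed point HJ} via the dynamic programming principle; identification of $\lambda(\M)$ with the optimal growth rate~\eqref{e-growth} follows by iterating~\eqref{eq:fixed point HJ}, and uniqueness of $\lambda(\M)$ among homogeneous, locally bounded eigenvectors is a direct consequence of the uniform $d_H$-contraction.

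The main obstacle is making the Hilbert contraction \emph{genuinely uniform} over measurable controls $M\colon[0,t_0]\to\M$ rather than merely over constant controls: one needs an $M(\cdot)$-independent lower bound on the entries of $R(t_0, M)$, which is exactly where the uniform irreducibility constant $\nu$ enters. A secondary difficulty is to verify invariance of $\mathcal C$ under $T_{t_0}$ and completeness of the quotient, both of which require careful boundary control of candidate eigenfunctions on $\partial K\setminus\{0\}$.
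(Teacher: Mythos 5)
Your argument hinges on the claim that the Bellman operator $T_{t_0}[w](x)=\sup_M w(x_M(t_0))$ is a \emph{strict} contraction in Hilbert's projective metric on the cone $\mathcal C$ of continuous, degree-one homogeneous functions, and this claim does not hold. The Birkhoff--Hopf theorem applies to $R(t_0,M)$ acting on \emph{vectors} of $K$: since $R(t_0,M)K_0$ has finite projective diameter, the map $x\mapsto R(t_0,M)x$ contracts Hilbert's metric on $K_+$. It does not follow that the pullback (Koopman) operator $w\mapsto w\circ R(t_0,M)$ contracts Hilbert's metric on the function cone: the image of $\mathcal C$ under this pullback has infinite projective diameter, and in fact the pullback is only non-expansive. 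Concretely, write $w_1=e^{g}w_2$ with $g$ homogeneous of degree $0$; then the projective distance between $w_1\circ R$ and $w_2\circ R$ is the oscillation of $g$ over the subcone $R(t_0,M)K$, which equals the full oscillation of $g$ whenever $g$ varies only inside that subcone --- so no uniform factor $c<1$ exists. The auxiliary claim that a supremum of order-preserving homogeneous $c$-contractions is again a $c$-contraction in Hilbert's metric is also false: from $e^{a_M}T^{(M)}w_2\leq T^{(M)}w_1\leq e^{b_M}T^{(M)}w_2$ with $b_M-a_M\leq c\,d(w_1,w_2)$ one only gets $d(\sup_M T^{(M)}w_1,\sup_M T^{(M)}w_2)\leq \max_M b_M-\min_M a_M$, which can be as large as $d(w_1,w_2)$ because the anchoring constants depend on $M$. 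Note also that if $T_{t_0}$ were a strict contraction, the eigenfunction would be unique up to a positive factor --- a much stronger statement than what is asserted (only $\lambda(\M)$ is claimed unique), and one that the lack of controllability makes implausible: the reachable sets at time $t_0$ do not cover the simplex (there are invariant subcones $K_\tau$), so the operator cannot mix the values of $w$ enough to damp oscillations concentrated near invariant regions. This obstruction is precisely why the problem is not a routine application of Banach's fixed point theorem.

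The paper's route uses the Birkhoff--Hopf contraction in the opposite direction: the \emph{forward} flow $R(t,M)$ contracts Hilbert's metric on $K_+$ uniformly for $t\geq T$ (Lemmas~\ref{lem:stable cone} and~\ref{lem:contraction}), and this is used to prove that the discounted value functions $u_\eps$ of the projected problem on $\S$ are bounded and equi-Lipschitz with respect to Hilbert's metric, uniformly in $\eps$. The pair $(\lambda,\mathring u)$ is then obtained by the Lions--Papanicolaou--Varadhan vanishing-discount limit (a weak KAM argument), after which a separate argument upgrades the regularity of $\mathring u$ to global Lipschitz continuity on $\S$ in the $\ell^1$ norm (using that trajectories at time $1$ land in the compact set $K_1\cap\S$), and uniqueness of $\lambda$ is proved via calibrated curves. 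If you want to salvage a fixed-point strategy, you would need a contraction property of $T_{t_0}$ on the quotient space that genuinely uses the reward term $\l$ and the structure of the reachable sets, not merely positivity of $R(t_0,M)$; as written, the key step fails and the existence of the eigenvector is exactly the hard point the discounted approximation is designed to handle.
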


\begin{corollary}[Ergodicity] \label{cor:ergodicity}
Let $v_0: K\to \R_+$ be a continuous function, homogeneous of degree 1, positive on $K_0$. Define 
$v(t,x) = \sup_{M\in L^\infty(0,t)} v_0(x_M(t))$.
Then we have the following ergodicity result,
\[ (\forall x\in \vc{K_0})\quad  \lim_{t\to +\infty} \frac1t  \log (v(t,x))  = \lambda(\M)\, . \ \]
Moreover the convergence is locally uniform on $K_0$.
\end{corollary}

Theorem \ref{thm:optimal growth} is a special case of more general results stated in the theory of stability of linear inclusions. There matrices are not assumed to be Metzler matrices. The non-linear eigenvalue coincides with the joint spectral radius \cite{Rota-Strang}. In its seminal paper \cite{Barabanov}, Barabanov proved the existence of extremal norms in $\R^n$ which saturates \eqref{eq:fixed point HJ}. Later the same author investigated the behaviour of extremal trajectories in the three-dimensional case $n=3$, first when $\M$ has the specific structure of a segment with a rank one matrice for direction \cite{barabanov_1993}, secondly under a uniqueness condition for extremal trajectories verifying the Pontryagin Maximum Principle (PMP) \cite{barabanov_2008} (see also the recent improvement by Gaye et al \cite{gaye_2014}). We also refer to \cite{Wirth} for an alternative proof of the existence of Barabanov extremal norms. In parallel, several authors have analyzed stability of positive linear systems (see \cite{Mason-Wirth} and references therein). A natural question that arises in the literature is whether the knowledge of $\{\lambda(m)\}_{m\in \M}$, say $(\forall m \in \M)\; \lambda(m)<0$ guarantees the stability of the differential inclusion \eqref{eq:ODE}. A positive answer has been given in \cite{gurvits_shorten_mason_2007} in  dimension $n=2$. A negative answer has been given in (possibly) high dimension in the same work. Soon after, Fainshil et al give a counter-example in dimension $n=3$. It is a pair of matrices such that every convex combination has a negative spectral radius but the associated joint spectral radius is positive.  

We address similar questions in the present note, namely whether $\lambda(\M) = \max_m  \lambda(m)$ or $\lambda(\M) > \max_m  \lambda(m)$. We give a new and self-contained proof of the positive answer in dimension $n=2$. We also give three dimensional numerical examples with positive and negative answers. The case where $\lambda(\M) > \max_m  \lambda(m)$ is of particular interest. To find such a numerical example we restrict to the case where $\M$ is a segment, and the maximum of $\lambda(m)$ is attained at an interior point. We investigate  periodic perturbations of the optimal constant control in the spirit of \cite{Clairambault-Gaubert-Perthame, Clairambault-Gaubert-Lepoutre}. More precisely we compute the second order directional derivative of the Floquet eigenvalue. We derive a criterion about the local optimality of the constant control with respect to periodic perturbations. We exhibit a numerical example in for which this condition is satisfied. Numerical simulations of the full optimal control clearly shows the convergence of the optimal trajectory toward a limit cycle, suggesting that in this case the optimal control in infinite horizon is indeed a BANG-BANG periodic control. It is worth noticing that the criterion that we derive is the exact opposite of a so-called Legendre condition in geometric optimal control theory  \cite{AgrachevSachkov,BonnardCaillauTrelat}. The latter condition ensures the local optimality of the best constant trajectory (here the trajectory corresponding to the maximal Perron eigenvalue) for short times.

We emphasize that we develop here a new approach which takes advantage of an illuminating connection between problem \eqref{eq:fixed point HJ} and the weak KAM theory in Lagrangian dynamics \cite{fathi_book}. We apply tools from Hamilton-Jacobi PDE to prove Theorem \ref{thm:optimal growth}. We postulate that long-time dynamics of optimal trajectories are encoded in the so-called Aubry sets. In particular we give an example where the Aubry set is a limit cycle.  

\section{Non-linear eigenvectors}
Let $\M$ be a compact subset of $\mathcal P\subset \mathcal M_n(\R)$ the set of matrices which are both nonnegative off-the-diagonal and irreducible. A direct consequence of compactness is {\it uniform irreducibility}: it exists a constant $\nu>0$ such that for all $m\in \M$, and every partition of indices $\{ 1\dots n \} = \vc{I \cupdot J}$ one can find $i\in I$ and $j\in J$ such that $m_{ij} \geq \nu$. 

%
%
Let $K$ be the nonnegative orthant in $\R^n$, $K_+$ the positive orthant, and $K_0=K\setminus\{0\}$. 
For $t>0$, $x\in K$ and a measurable control function $M:[0,t]\to \M$, we define $x_M\in W^{1,\infty}([0,t], \R^n)$  the solution of the following linear problem with control $M$:
\begin{equation} \begin{cases}
\dot x_M(s) = M(s) x_M(s),\\
x_M(0) = x\,.
\end{cases}
\label{eq:ODE}
\end{equation}
We also denote $x_M(s) = R(s,M)x$, where $R$ is the resolvent of the flow. Finally \vc{we denote in short $L^\infty(0,t)$} the set of measurable (bounded) control functions $M:[0,t]\to \M$. 

\vc{We make the simplifying assumption that $\M$ is convex for the sake of presentation. Results contained in this work are still valid for nonconvex sets $\M$, provided the optimal controls are replaced with optimal relaxed controls which take values in the closed convex hull $\co(\M)$.}

For a constant control $M(s) \equiv m$ we have $R(t,m) = e^{tm}$. It is an immediate consequence of the Perron-Frobenius Theorem that, being \vc{$\phi_m\in K_+$} a left Perron eigenvector of $m$, the linear function $\overline v(x) =  \la \phi_m,x\ra $ satisfies the following identity,
\[ (\forall t\in \RR_+)\; (\forall x\in K)\quad e^{\lambda(m) t}\overline v(x) =   \overline v(x_m(t))  \,,  \]
where $\lambda(m) \in \RR$ is the dominant eigenvalue of $m$. 

The following result may be thought of as an extension
of the Perron-Frobenius theorem to the present
growth maximization problem.

\begin{theorem}\label{thm:optimal growth}
Under previous assumptions there exist a real \vc{$\lambda(\mathcal M)$} and a function $\overline v: K\to \R_+$, homogeneous of degree 1, \vc{positive on $K_0$,} Lipschitz continuous,
satisfying the following identity 
\begin{equation} 
\!(\forall t\in \RR_+)\; (\forall x\in K)\quad e^{\vc{\lambda(\M)} t} \overline{v}(x) = \!\!\! \!\!\!\sup_{M\in L^\infty(0,t)} \!\!\! \!\!\overline{v}(x_M(t)) , 
\label{eq:fixed point HJ}
\end{equation}
Moreover the "eigenvalue" \vc{$\lambda(\mathcal M)$} is unique with respect to the class of homogeneous functions of degree 1 which are locally bounded on $K$.
\end{theorem}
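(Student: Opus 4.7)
My starting point would be to lift \eqref{eq:fixed point HJ} to a nonlinear semigroup problem. On the cone $\mathcal{C}$ of continuous functions $v:K\to\R_+$ which are homogeneous of degree one and strictly positive on $K_0$, define the Bellman operator
\[ T_t v(x) = \sup_{M\in L^\infty(0,t)} v(x_M(t))\,. \]
Concatenation of controls together with uniqueness of solutions of \eqref{eq:ODE} show that $(T_t)_{t\geq 0}$ is a semigroup of \emph{monotone}, \emph{positively homogeneous of degree one} operators on $\mathcal{C}$. Equation \eqref{eq:fixed point HJ} then reads $T_t\overline v=e^{\lambda(\M) t}\overline v$, and by the semigroup property it is enough to produce a single nontrivial fixed ray $\R_+\overline v$ of $T_{t_0}$ for some fixed $t_0>0$.

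The core step is to prove that $T_{t_0}$ strictly contracts Hilbert's projective metric on $\mathcal{C}$ for $t_0$ sufficiently large. For every fixed control $M\in L^\infty(0,t_0)$, the linear resolvent $R(t_0,M)$ leaves $K$ invariant and, thanks to uniform irreducibility (off-diagonal entries bounded below by $\nu$ on every bipartition), sends $K_0$ into the open cone $K_+$ as soon as $t_0$ is large enough. By the Birkhoff--Hopf theorem applied to each $R(t_0,M)$, combined with compactness of $\M$, the Birkhoff contraction ratios $\kappa(R(t_0,M))$ can be uniformly bounded by some $\kappa^\ast<1$. Transporting this to the dual, the nonlinear operator $T_{t_0}$ becomes a strict contraction on the quotient $\mathcal{C}/\R_+$ equipped with the induced Hilbert metric. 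A Banach fixed-point argument then delivers a normalized eigenfunction $\overline v$ with eigenvalue $e^{\lambda(\M) t_0}$, and the semigroup property promotes this to all $t\in\R_+$.

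Next I would harvest the analytic consequences. Global Lipschitz regularity of $\overline v$ follows from homogeneity together with bounded slope on the simplex $\S$, itself a consequence of the uniform bounds on the flow $R(t_0,\cdot)$ combined with the strict positivity of $\overline v$ on $K_0$. Uniqueness of $\lambda(\M)$ within the class of homogeneous, locally bounded functions is obtained by a classical Perron-type sandwich: any candidate pair $(\mu,w)$ can be trapped between $\alpha\overline v\leq w\leq\beta\overline v$ on $\S$, and iterating $T_t$ forces $e^{\mu t}=e^{\lambda(\M) t}$. The coincidence with the growth rate \eqref{e-growth} then follows, yielding Corollary~\ref{cor:ergodicity} through a last sandwich between multiples of $\overline v$ and $v_0$. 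The viscosity Hamilton--Jacobi characterization of $\overline u=\log\overline v$ on $\S$ is obtained from the dynamic programming principle for $T_t$ and the chain rule in log and radial coordinates: on the simplex the Hamiltonian reads
\[ H(p,y)=\sup_{m\in\M}\Bigl\{\langle my,\un\rangle+\langle p,\, my-\langle my,\un\rangle y\rangle\Bigr\}\,, \]
and the sub/supersolution inequalities follow from standard test function arguments applied to $\overline u$.

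The principal obstacle I anticipate is the quantitative control of the Birkhoff contraction ratio \emph{uniformly over measurable controls} $M\in L^\infty(0,t_0)$. While Birkhoff--Hopf applied to a single positive linear map is classical, here one must bound from below the smallest positive entry of $R(t_0,M)$ uniformly in $M$, which amounts to showing that products of exponentials of uniformly irreducible Metzler matrices fill the interior of $K$ in a controlled way. This is precisely where the strong uniform irreducibility assumption, rather than mere irreducibility of each $m\in\M$, is decisive, and this step will require the most care.
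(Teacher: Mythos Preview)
Your plan contains a genuine gap at the ``transporting this to the dual'' step. Birkhoff--Hopf gives you, for each fixed control $M$, a contraction of the \emph{state} Hilbert metric on $K_+$:
\[
d_K\bigl(R(t_0,M)x,\,R(t_0,M)y\bigr)\leq \kappa^\ast\, d_K(x,y).
\]
This does \emph{not} imply that the Bellman operator $T_{t_0}$ contracts the \emph{function-space} Hilbert metric on $\mathcal C$. Indeed, for a single linear map $R$ the precomposition $v\mapsto v\circ R$ satisfies
\[
d_{\mathcal C}(v_1\circ R,\,v_2\circ R)=\operatorname{osc}_{z\in R(\S)}\log\frac{v_1(z)}{v_2(z)},
\]
and there is no reason this oscillation over the image $R(\S)$ should be $\leq\kappa^\ast$ times the oscillation over $\S$: the extrema of $\log(v_1/v_2)$ may well lie inside $R(\S)$. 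Your ``dual'' heuristic is valid only on the cone of \emph{linear} functionals $x\mapsto\langle\phi,x\rangle$, where precomposition by $R$ becomes the adjoint $R^\ast$ acting on $\phi$; it does not extend to the nonlinear cone $\mathcal C$. Put differently, strict contraction of $T_{t_0}$ on $\mathcal C/\R_+$ would force uniqueness of the eigenfunction $\overline v$ up to scaling, i.e.\ uniqueness of the weak KAM solution up to constants---something the paper neither claims nor needs, and which is known to fail in closely related ergodic HJ problems.

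The paper circumvents this by a different mechanism. It still uses Birkhoff--Hopf, but only on the state space (Lemma~\ref{lem:contraction}): the uniform exponential contraction of $R(t,M)$ on $K_+$ is fed into the \emph{vanishing discount} (Lions--Papanicolaou--Varadhan) scheme of Step~\#4. Concretely, the discounted value function $u_\eps$ is shown to be Lipschitz in the Hilbert metric \emph{uniformly in $\eps$}, by comparing two trajectories driven by the same near-optimal control and integrating the contraction estimate against the running cost (this is where Lemma~\ref{lem:lip} enters). One then extracts a limit $(\lambda,\mathring u)$ by Arzel\`a--Ascoli along $\eps\to0$, and upgrades regularity to the boundary $\partial\S$ in Step~\#6. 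So the contraction is used to produce \emph{compactness}, not a Banach fixed point. The obstacle you flagged at the end---uniform positivity of $R(t_0,M)$ over all measurable $M$---is real but is handled by the $K_\tau$ construction of Lemma~\ref{lem:stable cone}; the obstacle you did not flag, namely the passage from state-space to function-space contraction, is the one that actually breaks your argument.
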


\begin{corollary}[Ergodicity] \label{cor:ergodicity}
Let $v_0: K\to \R_+$ be a continuous function, homogeneous of degree 1, positive on $K_0$. Define 
$v(t,x) = \sup_{M\in L^\infty(0,t)} v_0(x_M(t))$.
Then we have the following ergodicity result,
\[ (\forall x\in \vc{K_0})\quad  \lim_{t\to +\infty} \frac1t  \log (v(t,x))  = \lambda(\M)\, . \ \]
Moreover the convergence is locally uniform on $K_0$.
\end{corollary}

}\fi
\section{Techniques of proof of Theorem \ref{thm:optimal growth}}\label{sec-sketch}
We present in this section the main elements of the proof of Theorem~\ref{thm:optimal growth}. In this Section we write in short $\lambda = \lambda(\M)$.

%
\medskip
\noindent{\bf Step \#1.} {\bf Homogeneity and projection of the dynamics onto the simplex.} 
The infinitesimal version of \eqref{eq:fixed point HJ} writes as a Hamilton-Jacobi equation in the viscosity sense, 
\begin{equation} \lambda \overline v(x) = \max_{m\in \mathcal M} \la D_x \overline v(x),m x \ra\, . \label{eq:infinitesimal} \end{equation}
Using the homogeneity of the function $\overline v$ we can project \eqref{eq:infinitesimal} onto the simplex $\S = \{x\in K: \langle\un,x\rangle = 1\}$. We write 
\[
\overline v(x) = \langle\un,x\rangle \mathring v\left(\frac x{\langle\un,x\rangle}\right)\, ,
\]
where $\mathring v$ is defined on $\S$. Then problem \eqref{eq:infinitesimal} is equivalent to finding $(\lambda, \mathring v)$ such that 
\begin{equation} 
\lambda \mathring v(y) = \max_{m\in \M} \left(  \l(y,m) \mathring v(y) + \la D_y \mathring v(y) , b(y,m)\ra \right)\, , \label{eq:HJ v} \end{equation}
where the pay-off $\l$ and the vector fields $ b$ are given by
\[ \l(y,m) = \la \un , m y\ra \, , \quad b(y,m) = m y - \l(y,m) y\, . \]
Note that each vector field $b(\cdot,m)$ is tangent to the simplex $\S$. It gives indeed the projected dynamics on  the simplex: if $x_M$ is solution to \eqref{eq:ODE} then $y_M = \frac{x_M}{\langle\un,x_M\rangle}$ is solution to the non-linear ODE
\begin{equation}\label{eq:simplex dyn}\dot y_M(s) = b(y_M(s),M(s))\,.\end{equation} 

\medskip

\noindent{\bf Step \#2.} {\bf Computation of a Lipschitz constant with respect to Hilbert's projective metric}
Recall that {\em Hilbert's (projective) metric}
is defined, for all $x,y\in K_+$,
by \begin{align}
d(x,y) = \log \max_{1\leq i,j\leq n} \frac{x_iy_j}{x_jy_i} \enspace .
\label{e-def-hilbert}
\end{align}
It is a metric in the set of half-lines included in the interior
of $K$. In particular, $d(x,y)=0$ iff $x$ and $y$ are proportional.
It is known to be a weak Finsler structure~\cite{nussbaum94}, obtained
by thinking of $K_+$ as a manifold with the
seminorm $\|h\|_x= \max_i h_ix_i^{-1} - \min _j h_j x_j^{-1}$ in the
tangent space at point $x$. Then, 
\[
d(x,y) = \inf \int_0^1 \|\dot{\gamma}(s)\|_{\gamma(s)} ds 
\]
where the infimum is taken over all differentiable paths $\gamma$ contained
in the interior of $K$, such that $\gamma(0)=x$ and $\gamma(1)=y$. 

Let $\q\in K_+$ and $m\in \mathcal P$. We aim to compute the Lipschitz constant of the function $l(x) = \frac{\la \q,mx\ra}{\la \q,x\ra}$, when the source set is endowed with the Hilbert metric. \vc{For a given matrix $m = (m_{ij})$ we denote $|m| = (|m_{ij}|)$ the matrix obtained by taking absolute values of the coefficient pointwise.} 

The following lemma is established by exploiting the Finsler's nature of Hilbert projective metric, along the lines of~\cite{nussbaum94}. See also~\cite{zhengstephane13}.
\begin{lemma}\label{lem:lip}
Let $l: (K,d)\to (\R,|\cdot|)$ defined as $l(x) = \frac{\la \q,mx\ra}{\la \q,x\ra}$. It is Lipschitz continuous with the following bound on the Lipschitz constant,
\[ \lip  l  \leq \sup_{x\in \vc{K_0}} \inf_{a\in \R}\dfrac{\la \q,\left|m  - a. \Id  \right|x \ra}{\la \q, x\ra}\, . \] 
\end{lemma}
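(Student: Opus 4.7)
The plan is to exploit the Finsler characterization of Hilbert's metric recalled above. Since $d(x,y)$ is obtained by minimising the length $\int_0^1 \|\dot\gamma(s)\|_{\gamma(s)}\,ds$ over smooth paths $\gamma$ lying in the interior $K_+$, it suffices to establish an infinitesimal pointwise bound
\[
|\la Dl(x), h\ra| \,\leq\, \|h\|_x \cdot \inf_{a\in\R}\frac{\la \q,|m-a\,\Id|\,x\ra}{\la \q,x\ra}
\]
for every $x\in K_+$ and every tangent vector $h\in \R^n$, and then integrate along a length-minimising path. Taking the supremum over $x$ of the right-hand coefficient will deliver the asserted bound on $\lip l$.

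A direct computation gives $Dl(x) = (m^T\q - l(x)\,\q)/\la \q, x\ra$, and the degree-$0$ homogeneity of $l$ yields $\la Dl(x), x\ra = 0$. This provides the freedom to replace $h$ by $h-\alpha x$ for any $\alpha\in\R$ without affecting $\la Dl(x), h\ra$. I would then choose $\alpha = \la \q, h\ra/\la \q, x\ra$, for which $\la \q, h-\alpha x\ra = 0$. Decomposing $m - a\,\Id = (m - l(x)\,\Id) + (l(x)-a)\,\Id$, the contribution $(l(x)-a)\la \q, h-\alpha x\ra$ vanishes, leaving
\[
\la Dl(x), h\ra \,=\, \frac{\la \q, (m-a\,\Id)(h-\alpha x)\ra}{\la \q, x\ra}, \qquad \forall\, a\in\R.
\]

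The crucial ingredient is then that this $\alpha$ is a convex combination of the ratios $h_i/x_i$ with positive weights $\q_i x_i$, so it lies in the interval $[\min_j h_j/x_j,\,\max_i h_i/x_i]$; consequently $|h_i - \alpha x_i|\leq \|h\|_x\, x_i$ componentwise. Combined with the entrywise inequality $|(m-a\,\Id)(h-\alpha x)|\leq |m-a\,\Id|\cdot|h-\alpha x|$ and the nonnegativity of $\q$, this yields the infinitesimal bound; optimising over $a\in\R$ and integrating along paths then completes the proof. The main (and essentially the only) obstacle is identifying this correct value of $\alpha$: it must simultaneously make $\la \q, h-\alpha x\ra$ vanish (so that the shift by $a\,\Id$ leaves the expression invariant) and lie in the interval spanned by the $h_i/x_i$ (so the componentwise bound by $\|h\|_x\,x$ holds). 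The weighted-average interpretation of $\la \q,h\ra/\la \q,x\ra$ reconciles these two requirements, following the approach of Nussbaum~\cite{nussbaum94}.
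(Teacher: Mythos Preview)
Your argument is correct and follows the same Finsler strategy as the paper, but the algebraic execution differs. The paper writes $v_i = u_i x_i$ and expands $Dl(x)v$ directly as the symmetrized double sum
\[
Dl(x)v \;=\; \frac{\sum_{i,j,k} q_i q_k\,(m_{ij}-a\delta_{ij})(u_j-u_k)\,x_j x_k}{\langle q,x\rangle^2},
\]
which is valid for every $a\in\R$ because the $a$-terms cancel by symmetry in $(j,k)$; it then bounds $|u_j-u_k|\leq \max u-\min u=\|v\|_x$ and sums. This bypasses your choice of a specific shift $\alpha$: the double-sum form makes the invariance in $a$ automatic and the Finsler seminorm appear immediately. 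Your route---shifting $h$ by the weighted average $\alpha=\langle q,h\rangle/\langle q,x\rangle$ so as to simultaneously kill $\langle q,h-\alpha x\rangle$ and obtain the componentwise bound $|h_i-\alpha x_i|\leq \|h\|_x\,x_i$---is a valid alternative, and makes explicit why both requirements (freedom in $a$ and control by $\|h\|_x\,x$) can be met at once. The paper's version is a touch shorter; yours is more transparent about the role of $\alpha$.
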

\if{

\begin{proof}[Proof of Lemma \ref{lem:lip}]
The metric space $(K,d)$ possesses a Finsler structure (Nussbaum 1994). The norm on the tangent space is given by 
\[ \|v\|_x = \max_j \left(\dfrac {v_j} {x_j}\right) - \min_k \left(\dfrac {v_k} {x_k}\right) \, .\] 
The Lipschitz constant of $l$ on $K$ is given by
\[ \lip  l = \sup_{\vc{\begin{subarray}{c}(x,v)\in K\times \R^n \\ \|v\|_x\neq 0\end{subarray}}} \dfrac{|Dl(x) v|}{\|v\|_x}\,. \]
We write $v_i = u_i x_i$, and $m  = (m_{ij})$. 
We have for all $a\in \R$,
\begin{align*}  
&|Dl(x) v|  = \\
&\dfrac{\left|\left(\sum \q_im_{ij} u_j x_j\right)\left(\sum \q_k x_k\right) - \left(\sum \q_im_{ij} x_j\right)\left(\sum \q_k u_k x_k\right)\right|}{ \la \q,x\ra^2}\\
& = \dfrac{\left| \sum \q_i\q_k\left(m_{ij} - a \delta_{ij}\right) \left(u_j - u_k\right) x_j x_k\right|}{ \la \q,x\ra^2} \\
& \leq \dfrac{\la \q,\left|m - a.\Id \right|x \ra}{\la \q,x\ra} \left(\max u_j - \min u_k\right)
\end{align*}
This concludes the proof of Lemma \ref{lem:lip}. 
\end{proof}}\fi

\medskip

\noindent{\bf Step \#3.} {\bf Exponential contraction of the flow (after some time).}
A key technical ingredient is the following lemma, which shows that
for a fixed time $\tau>0$, the flow maps the closed cone to its interior.
\begin{lemma}\label{lem:stable cone}
Let $\tau>0$. Define the cone $K_\tau\subset K$ as the convex closure of images of $K$ by the flow after a time step $\tau>0$,
\[ K_\tau = \co\left(\bigcup_{M\in \mathcal{C}(\tau)} R(\tau,M)K  \right)\, .\]
It satisfies the following properties, 
\begin{itemize}
\item $K_\tau$ is stable with respect to every flow $R(s,M)$, $s\geq 0$, $M \in L^\infty(0,s)$. 
\item \vc{$K_\tau$ is included in the interior of the cone, closed, and bounded in Hilbert's projective metric.}
\end{itemize}
\end{lemma}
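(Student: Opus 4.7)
The plan is to verify the two bullet points in turn; closedness is built into the definition since $\co$ denotes the closed convex hull.

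For invariance of $K_\tau$ under every flow $R(s,M)$, I would use time-concatenation of controls. By linearity and continuity of $R(s,M)$ it is enough to treat a single generator $R(\tau,M')x$ with $x\in K$, $M'\in L^\infty(0,\tau)$. Writing $M''$ for the concatenation of $M'$ on $[0,\tau]$ and $M$ on $[\tau,\tau+s]$, one has $R(s,M)R(\tau,M')x=R(s+\tau,M'')x=R(\tau,\widetilde M)y$, where $\widetilde M$ is $M''$ translated to $[0,\tau]$ and $y=R(s,M''|_{[0,s]})x$. Since the Metzler flow preserves $K$, one has $y\in K$, so $R(s,M)R(\tau,M')x\in R(\tau,L^\infty(0,\tau))K\subset K_\tau$. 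The full inclusion $R(s,M)K_\tau\subset K_\tau$ then follows by continuity of $R(s,M)$ and invariance of closed convex hulls under continuous linear maps.

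For the inclusion of $K_\tau\setminus\{0\}$ in $K_+$, I would first establish that $R(\tau,M)$ maps $K_0$ into $K_+$ for every measurable control. After an exponential shift making each $M(s)+\alpha\Id$ entrywise nonnegative, the Peano--Baker series for the resolvent produces, between any two indices $i,j$, a strictly positive contribution from time-ordered paths through the strongly connected graph supplied by the $\nu$-bound of uniform irreducibility. Coupled with the convexity of $K_+\cup\{0\}$, this ensures every finite convex combination of generators stays in $K_+\cup\{0\}$. To rule out that the closure picks up boundary points, I would couple this with the Hilbert bound below: a Hilbert-ball centered inside the simplex is closed in $K_+$, so boundary approach in the Euclidean sense would violate the uniform Hilbert estimate.

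For boundedness in Hilbert's projective metric, I would invoke the Birkhoff--Hopf theorem, which guarantees that a linear map sending $K_0$ into $K_+$ has image of finite Hilbert diameter $\Delta(R(\tau,M))$. The essential strengthening is a \emph{uniform} quantitative positivity estimate: using the compactness of $\M$, the constant $\nu$, and the fixed horizon $\tau$, I would produce $\varepsilon=\varepsilon(\tau,\nu,\M)>0$ such that, for every $x\in K_0$ and every $M$, the normalized image $R(\tau,M)x/\la\un,R(\tau,M)x\ra$ has all coordinates at least $\varepsilon$. This bounds the Hilbert distance from a fixed reference point in the interior of $\S$ uniformly in $M$ and $x$, and since the Hilbert metric is geodesically convex on $K_+$, such a bound is preserved by finite convex combinations and hence by closure.

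The hardest step is exactly this uniform quantitative positivity estimate: converting the purely combinatorial hypothesis of uniform irreducibility (a $\nu$-edge in every index partition, with the active indices possibly depending on time) into a uniform entrywise lower bound on the normalized resolvent $R(\tau,M)$. A careful Peano--Baker expansion, selecting a graph path of length at most $n-1$ and absorbing the time-variability of the active edges into sufficiently fine subintervals of $[0,\tau]$, should deliver such a bound, with the constant depending only on $\tau$, $\nu$, $n$ and $\sup_{m\in\M}\|m\|$.
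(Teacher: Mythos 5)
Your treatment of the first bullet (concatenation of controls, then pushing through the closed convex hull by linearity and continuity) is essentially the paper's argument, which phrases the same idea via Carath\'eodory's theorem. For the second bullet you take a genuinely different route. The paper argues qualitatively, by contradiction and compactness: from sequences $x_n$, $M_n$ whose normalized images approach $\partial\S$ it extracts, via Ascoli and weak-$*$ convergence of the controls, a limit trajectory $y(s)$ on the simplex; the set $I$ of coordinates vanishing at time $\tau$ is shown to vanish identically on $[0,\tau]$, and uniform irreducibility applied to the partition $I\cupdot I^c$ yields the contradiction. You instead propose a direct quantitative estimate: a uniform $\varepsilon(\tau,\nu,n,\sup_{m\in\M}\|m\|)>0$ below every coordinate of the normalized image $R(\tau,M)x/\la\un,R(\tau,M)x\ra$, from which both the inclusion in $K_+$ and the Hilbert boundedness of the closed convex hull follow at once (your closing observation is right, and can be stated even more simply: every point of $K_\tau\cap\S$ is a convex combination of normalized generators, so its coordinates are again $\geq\varepsilon$, and $\{y\in\S:\min_i y_i\geq\varepsilon\}$ is closed). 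Your route buys explicit constants --- hence, through Birkhoff--Hopf, an explicit contraction rate in Lemma~\ref{lem:contraction} --- and it needs neither the weak-$*$ limiting argument nor the convexity of $\M$; the paper's route avoids the combinatorial bookkeeping. The one point that genuinely requires care in your sketch is the positivity estimate itself: since the $\nu$-edge supplied by uniform irreducibility may change with time, there is no fixed graph in which to ``select a path of length at most $n-1$,'' and a Peano--Baker bound along a fixed chain of indices would fail; the chain must be built adaptively (start from a coordinate $\geq 1/n$, use that coordinates decay at most at a rate controlled by $\sup_m\|m\|$, and at each of at most $n-1$ stages apply the partition hypothesis to the set of already-large coordinates, pigeonholing over the finitely many index pairs to find one pair active on a subset of the current subinterval of measure bounded below in terms of $\tau$ and $n$). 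Your remark about absorbing the time-variability into fine subintervals indicates you intend exactly this, and with that step made precise the argument closes.
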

\if{
\begin{proof}
The cone $K_\tau$ is clearly \vc{positively} invariant by the flow $(R(s,M),s\geq0)$ for any control function $M$. Indeed by Caratheodory's Theorem any $z\in K_\tau$ is a linear convex combination of at most $n+1$ points in $\bigcup_{M\in \mathcal{C}(\tau)} R(\tau,M)K$: $z = \sum_i c_i R(\tau,M_i) x_i$. By linearity of the resolvent we have $R(s,N) z = \sum_i c_i R(\tau+s,M_i\sqcup N) x_i$, where $M_i\sqcup N$ denotes the concatenation of $M_i$ then $N$.  

We denote $\S_+ = \S\cap K_+$ the interior of $\S$. 
We shall ensure that $\S_\tau = \S\cap K_\tau $ is a compact subset of $\S_+$, in particular it does not touch the boundary $\partial \S$. It is sufficient to prove that $\bigcup_{M\in \mathcal{C}(\tau)} R(\tau,M)K $ satisfies this property. This is a consequence of uniform irreducibility of $\M$. 

We assume by contradiction that there are sequences of initial conditions $x_n$ and control functions $M_n$ such that $y_n(\tau) =  \frac{R(\tau,M_n)x_n}{\la \un, R(\tau,M_n)x_n\ra}$ converges towards $y_\tau\in   \partial \S$. First we observe that both the initial condition $y_n$ and the derivative $\dot y_n(s)$ are uniformly bounded by \eqref{eq:simplex dyn}. From Ascoli's Theorem we obtain uniform convergence of $y_{n}(s)$ towards a Lipschitz function $y(s)$ for $s\in [0,\tau]$, , up to extraction of a subsequence. Second we can extract another subsequence such that $M_n(s)$ converges weakly-$*$, towards a measurable function $M(s)\in L^\infty([0,\tau])$. The weak limit is uniformly irreducible by convexity of $\mathcal P$ and compactness of $\M$. Finally, the function $y_n(s)$ converges uniformly towards $y(s)$ which satisfies
\begin{equation} 
y(s) = y + \int_0^s \left(M(s')y(s') -\la \un,M(s')y(s')) \ra y(s')\right)\, ds'\, ,
\label{eq:strong limit}
\end{equation} 
for all $s\in [0,\tau]$.

Since $y(\tau)\in \partial \S$ there is an index $i$ such that $y_i(\tau) = 0$. We introduce $I$ the set of such indices. We deduce from the properties of the ODE system \eqref{eq:strong limit} and matrices $M(s)$ (namely nonnegative off-the-diagonal) that $(\forall i\in I)(\forall s \in [0,\tau])\; y_i(s) = 0$. By uniform irreducibility we can find a positive transition rate from the set $J = I^c$ to $I$, which contradicts the identity
\[ (\forall s\in [0,\tau]) \quad 0 = y_i(s) = \int_0^s  \sum_{j\in J} M_{ij}(s')y_j(s') \, ds'\, . \]  
This concludes the proof of  Lemma \ref{lem:stable cone}. 
\end{proof}
}\fi
%

A classical result of Birkhoff and Hopf shows that a linear map sending a (closed, convex, and pointed) cone to its interior is a strict contraction in Hilbert's projective metric, see for instance~\cite{lemmensnussbaum} for more information. We deduce from the Birkhoff-Hopf theorem and from Lemma~\ref{lem:stable cone} the following contraction result for the flow.
\begin{lemma}\label{lem:contraction}
There exist a time $T>0$ and a positive rate $\mu>0$  such that the flow $R(t,M)$ is uniformly exponentially contractive for $t\geq T$:
\begin{align}
(\forall t\geq  T)\; (\forall M \in L^\infty(0,t))\; (\forall (x,y)\in K_+\times K_+) \quad \nonumber\\ 
d(R(t,M)x, R(t,M)y) \leq e^{-\mu t} d(x,y)\, .
\end{align}
\end{lemma}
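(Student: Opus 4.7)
The plan is to combine Lemma~\ref{lem:stable cone} with the classical Birkhoff--Hopf contraction theorem and then iterate over successive time blocks of length $\tau$ to upgrade the one-shot contraction into an exponential decay.

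First, I would fix a $\tau>0$ as given by Lemma~\ref{lem:stable cone} and let
\[
\Delta \;:=\; \sup_{x,y\in K_\tau} d(x,y) \;<\; \infty,
\]
which is finite by the second bullet of that lemma. By construction, for every control $M\in L^\infty(0,\tau)$ the linear map $R(\tau,M)$ sends $K_0$ into $K_\tau$. The Birkhoff--Hopf theorem then yields a uniform contraction ratio
\[
c \;:=\; \tanh(\Delta/4) \;<\; 1,
\]
such that $d(R(\tau,M)x,R(\tau,M)y)\le c\,d(x,y)$ for all $x,y\in K_+$ and all $M\in L^\infty(0,\tau)$. The key point is that $c$ depends only on $\Delta$ and is therefore uniform in the control.

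Next, I would use the semigroup property of the controlled flow to chain these one-step contractions. For $t\ge \tau$ and $M\in L^\infty(0,t)$, write $t=k\tau+r$ with $k=\lfloor t/\tau\rfloor\ge 1$ and $0\le r<\tau$, and set $M_i(s):=M(s+i\tau)$. Then
\[
R(t,M) \;=\; R(r,M_k)\circ R(\tau,M_{k-1})\circ\cdots\circ R(\tau,M_0).
\]
Each of the $k$ factors $R(\tau,M_i)$ is a contraction of ratio at most $c$ (by the previous paragraph applied to $M_i$), while the residual factor $R(r,M_k)$ is at worst a $d$-nonexpansion, since any linear map preserving $K$ is nonexpansive in Hilbert's metric by Birkhoff's theorem. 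Composing,
\[
d\bigl(R(t,M)x,R(t,M)y\bigr) \;\le\; c^{k}\, d(x,y) \;\le\; c^{-1}\,e^{-\mu_0 t}\,d(x,y),
\]
with $\mu_0:=-\log(c)/\tau>0$.

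Finally, to absorb the multiplicative constant $c^{-1}$ into the exponential I would fix any $\mu\in(0,\mu_0)$ and choose $T$ large enough that $c^{-1}\le e^{(\mu_0-\mu)T}$; then for all $t\ge T$ one obtains the claimed uniform bound $d(R(t,M)x,R(t,M)y)\le e^{-\mu t}\,d(x,y)$. The main obstacle here is not in the iteration itself, which is straightforward, but rather in the one-step statement: it is crucial that the Birkhoff--Hopf ratio $c$ be uniform over the entire family of controls, which in turn rests on the fact that the enlarged cone $K_\tau$ from Lemma~\ref{lem:stable cone} has \emph{finite} Hilbert diameter independent of $M$. All subsequent work is packaged into that lemma.
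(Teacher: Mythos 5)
Your proof is correct and follows essentially the same route as the paper: Lemma~\ref{lem:stable cone} combined with the Birkhoff--Hopf theorem gives a uniform strict contraction over one time block of fixed length, which is then propagated to all large times. The only cosmetic difference is that the paper packages the iteration as subadditivity of the optimal contraction exponent $k(t)=\sup\log\bigl(d(R(t,M)x,R(t,M)y)/d(x,y)\bigr)$, so that $k(t)/t\to\inf_{t>0}k(t)/t\le k(1)<0$, whereas you chain the $\tau$-blocks explicitly and absorb the multiplicative constant $c^{-1}$ by taking $T$ large; the substance is identical.
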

\if{
\begin{proof}
We introduce $k(t)$ the best contraction constant with respect to the Hilbert distance,
\begin{align*}
k(t) &= 
\sup  \log\left( \dfrac{d(R(t,M) x,R(t,M)y)}{d(x,y)}\right) ;\\
&
 (x,y)\in K_+\times K_+\, ,\; x\neq y \, , \; M \in L^\infty(0,t) \enspace .
 \end{align*}
The flows $R(t,M)$ are nonexpansive, thus $k(t)$ is nonpositive. In addition, Lemma \ref{lem:stable cone} for $\tau = 1$ guarantees that for all $M \in L^\infty(0,1)$,  $R(1,M)K_+  \subset K_1$, where  $K_1$ is compact for the topology induced by the Hilbert metric. Therefore, diameters are comparable: $\Delta(R(1,M))\leq \Delta(K_1) := \sup\{ d(x,y): (x,y)\in K_1\times K_1 \}<+\infty$. Birkhoff's theorem asserts that 
\[ k(1) \leq \log\left( \tanh\left(\frac{\Delta(K_1)}4 \right)\right) <0\, . \]
On the other hand, we deduce from a classical semi-group argument that the function $k(t)$ is subadditive [Hille-Phillips]. Therefore 
\[ \lim_{t\to +\infty} \dfrac{k(t)}{t} = \inf_{t>0} \dfrac{k(t)}{t}\leq k(1) < 0\, .  \]
As a consequence, there exists $T>0$ and $\mu>0$ such that $k(t)\leq - \mu t$ for $t\geq T$. 
\end{proof}
}\fi
\begin{remark}
If
$\inf_{m\in \M} \min_{i\neq j} m_{ij} >0$, one can choose $T=0$ in the Lemma \ref{lem:contraction}, and accordingly, 
\begin{equation}
\mu  = \inf_{m\in\M} \left(\min_{i\neq j} \left(  2 (m_{ij} m_{ji} )^{1/2}\right)\right)>0\, .
\label{eq:hyp mu0}
\end{equation}
See also~\cite{zhengstephane13}.
\end{remark}

\medskip

\noindent{\bf Step \#4.} {\bf Weak KAM Theorem.}
As suggested by the expected exponential growth, we make a logarithmic transformation. Let introduce $\mathring u =  \log \mathring v$. The original problem \eqref{eq:fixed point HJ} writes equivalently: find a real $\lambda$ and a function $\mathring u$, defined on the simplex $\S$, 
such that 
\begin{equation} \lambda t + \mathring u(y) \!= \!\!\sup_{M\in L^\infty(0,t)}\!\left\{ \int_0^t \l(y_M(s),M( s ))\, ds + \mathring u(y_M(t))\right\}\, , \label{eq:fixed point}  \end{equation}
for all $t\geq 0$, 
or in its infinitesimal setting: find a real $\lambda$ and a function $\mathring u$ such that $\mathring u$ is the viscosity solution of the stationary Hamilton-Jacobi equation 
\begin{equation}
- \lambda +  H(D_y\mathring u(y) , y ) = 0 \, , \vc{\quad y\in \S}\, ,
\label{eq:ergodic HJ}
\end{equation}
where the Hamiltonian is defined as $H(p,y) = \max_m\left(  \l (y,m) + \la p,b(y,m)  \ra \right)$.

The existence of a solution $(\lambda,u)$ is known
as a weak KAM Theorem in the context of dynamical systems, see the work by Fathi~\cite{Fathi,Fathi-book}.
Here, we follow the now classical  argument of Lions-Papanicolaou-Varadhan to prove the existence of such a pair $(\lambda,\mathring u)$, the vector $\mathring u$ 
being obtained as a rescaled limit of the solution $u_\epsilon$ of a
Hamilton-Jacobi PDE with discount rate $\epsilon>0$.
In doing so, we make use
of the contraction property of Lemma~\ref{lem:contraction}
with respect to Hilbert's projective metric.

\if{
%
We introduce some discount factor $\eps>0$. We define the function \vc{$u_\eps: \S\to \R$} as
\[u_\eps(y) = \sup_{M\in L^\infty(0,\infty)} \left\{  \int_0^\infty e^{-\eps t}\l (y_M(t), M(t))\, dt \right\}\, . \]
Classical dynamic programming principle states that $u_\eps$ is the viscosity solution of the stationary Hamilton-Jacobi-Bellman equation (Bardi, Capuzzo-Dolcetta),
\[ - \eps u_\eps(y) + H(D  u_\eps(y),y) = 0\, , \vc{\quad y\in \S}\, . \]

First we notice that $\eps u_\eps$ is bounded, with $\|\eps u_\eps\|_{L^\infty} \leq \|\l\|_{L^\infty}$. Next we prove that \vc{the restriction $u_\eps: \S_+\to \R$} is Lipschitz continuous \vc{with respect to the Hilbert metric}, uniformly with respect to $\eps>0$. We choose a close-to-optimal control function $M:[0,\infty)\to \mathcal{M}$, associated with the initial point \vc{$x\in \S_+$}. We choose the same control function for the initial point \vc{$y\in \S_+$},
\begin{dmath*}
u_\eps(y) - u_\eps(x) 
\leq \delta + \int_0^\infty e^{-\eps t}\left(  \l(y_M(t), M(t)) -\l(x_M(t), M(t))\right)\, dt \\
 \leq \delta +  (\sup_m \lip_x \l) \int_0^\infty e^{-\eps t}d(y_M(t),x_M(t)) \, dt \\
 \leq \delta +  (\sup_m \lip_x \l) \left(  \int_0^T e^{-\eps t}d(y_M(t),x_M(t)) \, dt + \int_T^\infty e^{-\eps t}d(y_M(t),x_M(t)) \, dt \right) \\
 \leq \delta +  (\sup_m \lip_x \l) \left(  \int_0^T e^{-\eps t}d(y,x ) \, dt + \int_T^\infty e^{-(\eps + \mu ) t}d(y,x ) \, dt \right) \\
 \leq \delta + (\sup_m \lip_x \l) \left(   T  + \dfrac{e^{-\mu T}}{ \mu } \right) d(y,x)\, .
\end{dmath*}
As $\delta$ is arbitrarily small, we conclude that $\lip u_\eps \leq \frac{(\sup_m \lip_x \l)}{\mu}$. Recall from \vc{Lemma~\ref{lem:lip}} that 
\[ \sup_m \lip_x\l \leq \sup_m \left( \sup_{x\in \vc{K_0}}\dfrac{\la \un,|m| x \ra}{\langle\un,x\rangle}\right)\leq \sup_m \max_{i,j} |m_{ij}|\, . \]
Therefore (up to extracting a subsequence) the function $\eps u_\eps$ converges towards a constant $\lambda$, locally uniformly over $\S_+$ (recall that the Hilbert metric is singular at the boundary $\partial \S$). Moreover the function $u_\eps - \min  u_\eps$ converges towards a Lipschitz continuous function \vc{$\mathring{u}:\S_+\to \R$} such that $\mathring{u}$ is the viscosity solution of the Hamilton-Jacobi equation \eqref{eq:ergodic HJ}, \vc{and satisfies the fixed point equation \eqref{eq:fixed point}}.

We can extend the uniform convergence of $\eps u_\eps$ 
up to the boundary $\partial \S$ by weakening the previous argument. We write
\begin{dmath*}
u_\eps(y) - u_\eps(x) 
 \leq \delta + \int_0^\infty e^{-\eps t}\left(  \l(y_M(t), M(t)) -\l(x_M(t), M(t))\right)\, dt \\
 \leq \delta + 2\|\l\|_\infty T +  (\sup_m \lip_x \l) \int_T^\infty e^{-\eps t}d(y_M(t),x_M(t)) \, dt \\
 \leq \delta + 2\|\l\|_\infty T + (\sup_m \lip_x \l) \dfrac{e^{-\mu T}}{ \mu } \Delta(K_T)\, .
\end{dmath*}
We deduce the uniform convergence of $\eps u_\eps$ \vc{towards $\lambda$} on the whole simplex $\S$.

}\fi

\medskip

\noindent{\bf Step \#5.} {\bf Calibrated trajectories.}
Before we proceed with the end of the proof (boundedness of $\mathring u$ and uniqueness of $\lambda$), we recall some definitions from \cite{Fathi-book} adapted to our context. 

\begin{definition}[Calibrated trajectories]
A Lipschitz curve $\gamma:I\to \vc{\S_+}$ defined on the interval $I\subset \R$, associated to some control $M\in L^\infty(I)$, $\gamma = y_M$, is calibrated if for every $t\leq t'\in I$, we have
\[ \mathring u(\gamma(t')) - \mathring u(\gamma(t)) = \int_t^{t'} \left( \l(y_M(s),M( s )) - \lambda\right)\, ds    \]
\end{definition}

\if{
\vc{
\begin{proposition}[Existence of calibrated curves]
\end{proposition}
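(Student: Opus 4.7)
The plan is to construct calibrated curves as subsequential limits of finite-horizon optimizers of the variational problem encoded in \eqref{eq:fixed point}. First I would show that for every $y \in \S_+$ and every $T > 0$, the supremum defining $\mathring u(y)$ on $[0, T]$ is attained. Since $\M$ is compact and convex, the set of admissible controls $L^\infty(0,T)$ is sequentially compact in the weak-$*$ topology. Because the dynamics \eqref{eq:simplex dyn} is linear in $M$ and trajectories are uniformly bounded in $W^{1,\infty}$, the map $M \mapsto y_M$ is continuous from this weak-$*$ topology into $C([0, T]; \R^n)$ equipped with the uniform topology. Combined with the continuity of $\mathring u$ on $\S$ and the affine dependence of $\l$ on $M$, this renders the Bolza functional $J_T(M) = \int_0^T \l(y_M(s), M(s))\, ds + \mathring u(y_M(T))$ weak-$*$ continuous on the set of admissible controls, so the supremum is attained by some $M_T \in L^\infty(0,T)$ with associated trajectory $\gamma_T$.

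Next, by Bellman's dynamic programming principle applied to \eqref{eq:fixed point}, $\gamma_T$ is automatically calibrated on every subinterval of $[0, T]$: were the calibration equation to fail strictly on some subinterval, splicing in a locally better control there would contradict the global optimality of $M_T$. To pass to the infinite horizon starting at $y_0 \in \S_+$, I would choose $T_n \to \infty$, extract by a diagonal argument a weak-$*$ limit $M_\infty \in L^\infty_{\mathrm{loc}}([0, \infty))$ of the restrictions $M_{T_n}|_{[0, T]}$ for each fixed $T$, and obtain $\gamma_\infty = y_{M_\infty}$ as the uniform-on-compacts limit of the $\gamma_{T_n}$. The calibration identity then passes to the limit, yielding a calibrated curve $\gamma_\infty : [0, \infty) \to \S_+$; moreover, Lemma~\ref{lem:stable cone} ensures $\gamma_\infty(s) \in K_\tau \subset \S_+$ for every $s \geq \tau > 0$, so the curve remains uniformly away from $\partial \S$.

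The main difficulty is twofold: ensuring that the limit trajectory remains in $\S_+$, where $\mathring u$ and Hilbert's metric are well behaved, and ensuring that the calibration identity survives the weak-$*$ passage to the limit. The first point is handled by the invariance and interiority of the cones $K_\tau$ established in Lemma~\ref{lem:stable cone}, together with the exponential contraction of Lemma~\ref{lem:contraction}, which would also be needed should one wish to extend the construction to a two-sided curve $\gamma : \R \to \S_+$ by iterating the argument in reverse time and controlling the backward endpoints in Hilbert's metric. The second difficulty is resolved by the linearity of the dynamics, which upgrades weak-$*$ convergence of controls to uniform convergence of trajectories and hence makes both $\int_0^t \l(\gamma_{T_n}(s), M_{T_n}(s))\, ds$ and $\mathring u(\gamma_{T_n}(t))$ converge continuously to the corresponding quantities for $\gamma_\infty$.
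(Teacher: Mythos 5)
Your proposal is correct and is essentially the argument the paper intends but does not spell out (it only refers to the construction ``along the lines of'' Fathi's weak KAM theory): existence of finite-horizon maximizers by weak-$*$ compactness of relaxed controls together with strong convergence of the associated trajectories, calibration of optimal trajectories from the dynamic programming identity behind \eqref{eq:fixed point}, a diagonal limit to the infinite horizon, and Lemma~\ref{lem:stable cone} to keep the limiting curve in a compact subset of $\S_+$ where $\mathring u$ is Lipschitz for Hilbert's metric. One caveat worth noting: the identity your maximizing trajectories satisfy is $\mathring u(\gamma(t'))-\mathring u(\gamma(t))=\int_t^{t'}\bigl(\lambda-\l(y_M(s),M(s))\bigr)\,ds$, which is the equality actually used later (e.g.\ in the uniqueness of $\lambda$); the displayed definition of calibration carries the opposite sign, so your curves realize the intended notion rather than the literal formula.
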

}}\fi

Along the lines of~\cite{Fathi-book}, we show that calibrated
trajectories do exist. 

\medskip

\noindent{\bf Step \#6.} {\bf Regularity of $\mathring u$ up to the boundary $\partial \S$ and uniqueness of $\lambda$.}
First of all we deduce from the fixed point formulation \eqref{eq:fixed point} that $\mathring u$ is Lipschitz continuous on the whole $\S$ with the respect to the \vc{$\ell^1$} norm $|\cdot |_1$. Notice that the previous argument only yields local Lipschitz continuity due to the singularity \vc{of  the Hilbert metric at the boundary $\partial \S$}. From the fixed point formulation \eqref{eq:fixed point} we have in particular,
\begin{equation}\label{eq:fixed point T=1} \lambda 
+ \mathring u(y) = \!\!\!\!\sup_{M\in L^\infty(0,1)}\left\{ \int_0^1 \l(y_M(s),M( s ))\, ds + \mathring u(y_M(1))\right\}. \end{equation}
It suffices to observe that for all $M\in L^\infty(0,1)$, $y_M(1) = R(1,M)y\in K_1$  which is a compact subset of $\S$ with respect to the Hilbert metric. Thus $K_1$ is at uniform positive distance from the boundary $\partial \S$ and there exists a constant $C(K_1)$ such that for all $(x,y)\in K_1\times K_1$, $d(x,y) \leq C(K_1) |x-y|_1$. Finally we observe that \eqref{eq:fixed point T=1} is a supremum of Lipschitz functions as it is the case for $\mathring u(y_M(1))$:
\begin{dmath*} 
\left|\mathring u(y_M(1)) - \mathring u(x_M(1))\right| 
\leq \left(\lip \mathring u{|_{K_1}}\right) d (R(1,M)y,R(1,M)x) \\
\leq  \left(\lip \mathring u{|_{K_1}}\right) C(K_1) | R(1,M)y - R(1,M)x|_1 \\
 \leq \left(\lip \mathring u{|_{K_1}}\right) C(K_1) \left(\sup_{M\in L^\infty(0,1)} \|R(1,M)\|_1\right) |y-x|_1\, . 
\end{dmath*}
Therefore $\mathring u$ is globally Lipschitz on $\S$ with respect to the \vc{$\ell^1$} norm $|\cdot |_1$. \vc{As a consequence we can uniquely extend $\mathring u$ to a continuous function defined on $\S$.}
 

The uniqueness of $\lambda$ is then deduced from a classical argument, that
we skip, as well as the proof of Corollary~\ref{cor:ergodicity}.

\if{We repeat it here for the sake of completeness. 
Let $\lambda_1,\lambda_2$ associated to two bounded functions $\mathring u_1, \mathring u_2$ solutions of \eqref{eq:fixed point}. Take a calibrated curve associated to $\mathring u_1$ and some $y_0\in \S$ up to a time $T$ arbitrarily large. Then 
\begin{dmath*}
\lambda_1 T + \mathring u_1(y_0) = \int_0^T  \l(y_M(t),M(t ))\, dt + \mathring u_1(y_M(T)) \leq \lambda_2 T + \mathring u_2(y_2) - \mathring u_2(y_M(T)) + \mathring u_1(y_M(T))\, .
\end{dmath*}
Dividing by $T\to +\infty$ we obtain $\lambda_1\leq \lambda_2$. Obviously the same is true when exchanging the roles of $\lambda_1$ and $\lambda_2$. Therefore $\lambda_1 = \lambda_2$. 
}\fi

\if{

\begin{proof}[Proof of Corollary \ref{cor:ergodicity}]

First we perform the same change of unknown as in the proof of Theorem~\ref{thm:optimal growth}:  $\log v(t,x) = \log \langle\un,x\rangle + w(t,y)$. The latter satisfies the following Hamilton-Jacobi equation in the viscosity sense,
\begin{equation} \label{eq:HJB} 
\left\{\begin{array}{l}
\dfrac{\partial}{\partial t}(- w(t,y)) + H(D_y w(t,y),y) = 0\, , \quad t>0\, , \, y\in \S\, ,  \medskip \\
w(0,y) = w_0(y) := \log v_0(y) \, ,
\end{array}\right.
\end{equation}
By the dynamic programming principle, the value function reads also
\[ w(t,y) = \sup_{M\in L^\infty(0,t)} \left\{ \int_0^t \l(y_M(s),M(s))\, ds + w_0(y_M(t)) \right\}\, .  \]
Since $y_M(t)\in K_1$ for $t\geq 1$, we have $w_0(y_M(t))\in L^\infty(1,T)$, uniformly with respect to $T\geq 1$. Therefore we are reduced to proving that  
\[ \lim_{t\to +\infty} \dfrac{1 }{t} \sup_{M\in L^\infty(0,t)} \left\{ \int_0^{t} \l(y_M(s),M(s))\, ds \right\} = \lambda\]
uniformly with respect to $y\in \S.$
This statement is a classical consequence of the uniform convergence of $\eps u_\eps$~\cite{Arisawa1,Arisawa2,Bardi-Capuzzo}.
}\fi

\if{
 We repeat the argument below for the sake of completeness. 

Let $\delta>0$ be fixed, and $T = \frac \delta \eps$. We deduce from the dynamic programming principle that the optimal reward can be splitted as follows:
\begin{dmath}\label{eq:cor}
\eps u_\eps(y) = \sup_{M\in L^\infty(0,T)} \left\{ \eps\int_0^{T} \l(y_M(t),M(t))\, dt + \eps \int_0^{T} \left( e^{-\eps t} - 1 \right)\l(y_M(t),M(t))\, dt  + \eps e^{-\eps T}   u_\eps(y_M(T)) \right\}\, .
\end{dmath}
We have for the second contribution,
\begin{equation*}
\left| \eps \int_0^{T} \left( e^{-\eps t} - 1 \right)\l(y_M(t),M(t))\, dt \right| \leq \|\l\|_\infty \left( e^{-\delta } - 1 + \delta \right)\, .
\end{equation*}
Therefore, dividing \eqref{eq:cor} by $\delta$ we get as $\eps\to 0$ (or equivalently $T\to +\infty$),
\begin{dmath*}
 \liminf_{T\to +\infty} \dfrac{1 }{T} \sup_{M\in L^\infty(0,T)} \left\{ \int_0^{T} \l(y_M(t),M(t))\, dt \right\} +  \dfrac{\left( e^{-\delta} - 1\right)}\delta \lambda = O(\delta)\, , \\
 \limsup_{T\to +\infty} \dfrac{1 }{T} \sup_{M\in L^\infty(0,T)} \left\{ \int_0^{T} \l(y_M(t),M(t))\, dt \right\} +  \dfrac{\left( e^{-\delta} - 1\right)}\delta \lambda = O(\delta)\, ,
\end{dmath*}
Convergence occurs uniformly with respect to $y\in \S$ because of the uniform convergence $\epsilon u_\epsilon$ toxards $\lambda.$
Since $\delta>0$ can be chosen arbitrarily small, Corollary \ref{cor:ergodicity} is proven.
}\fi

\section{Qualitative properties of the optimal exponent $\lambda$.}\label{sec-qualitative}
\subsection{Optimality of stationnary controls in dimension $2$}

\begin{proposition}[Optimality and relaxed control]
The optimal growth rate $\lambda(\M)$ is greater or equal than any Perron eigenvalue $\lambda(m)$ for $m\in \M$.
\end{proposition}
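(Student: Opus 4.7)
The plan is to use the fixed-point identity \eqref{eq:fixed point HJ}, restricted to the trivial choice of a constant control, and then to exploit the classical Perron-Frobenius asymptotics of $e^{tm}$ for an irreducible Metzler matrix.

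Fix $m\in\M$ and let $\psi_m\in K_+$ denote the (right) Perron-Frobenius eigenvector of $m$, associated with the dominant eigenvalue $\lambda(m)$. Select an arbitrary $x\in K_+$. The first step is to plug the constant control $M(s)\equiv m$ into the supremum that appears in \eqref{eq:fixed point HJ}, which yields the one-sided bound
\[
e^{\lambda(\M)t}\,\overline{v}(x)\;\geq\;\overline{v}\bigl(e^{tm}x\bigr),\qquad\forall t\geq 0.
\]
This is the only place where the characterization of $\lambda(\M)$ from Theorem~\ref{thm:optimal growth} is used.

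The second step is to identify the exponential growth rate of the right-hand side. Since $m$ is an irreducible Metzler matrix, $\lambda(m)$ is a simple, strictly dominant eigenvalue and, for $x\in K_+$, one has the standard asymptotics
\[
e^{-\lambda(m)t}\,e^{tm}x\;\xrightarrow[t\to\infty]{}\;c(x)\,\psi_m,
\]
for some scalar $c(x)>0$ (obtained by pairing $x$ with the left Perron eigenvector). Homogeneity of degree one of $\overline{v}$, together with its continuity and positivity on $K_0$, then gives
\[
e^{-\lambda(m)t}\,\overline{v}(e^{tm}x)\;\xrightarrow[t\to\infty]{}\;c(x)\,\overline{v}(\psi_m)\;>\;0.
\]

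The third and final step combines the previous two displays. Multiplying the bound by $e^{-\lambda(m)t}$ gives
\[
e^{(\lambda(\M)-\lambda(m))t}\,\overline{v}(x)\;\geq\;e^{-\lambda(m)t}\,\overline{v}\bigl(e^{tm}x\bigr),
\]
and letting $t\to\infty$ the right-hand side converges to the positive constant $c(x)\,\overline{v}(\psi_m)$. This is only compatible with $\lambda(\M)\geq\lambda(m)$, since otherwise the left-hand side would tend to $0$. The inequality being valid for every $m\in\M$, the proposition follows.

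There is essentially no obstacle here: once one accepts \eqref{eq:fixed point HJ} as the defining property of $\overline{v}$ and $\lambda(\M)$, and once the positivity of $\overline{v}$ on $K_0$ is invoked to rule out degeneracies in the limit, the whole argument reduces to testing the supremum against the constant-in-time control $m$. Alternatively, exactly the same reasoning can be phrased through Corollary~\ref{cor:ergodicity} by choosing any $v_0$ positive on $K_0$ and comparing $v(t,x)\geq v_0(e^{tm}x)$ with the Perron-Frobenius asymptotics of $e^{tm}x$; this variant avoids explicit reference to $\overline{v}$ but is otherwise identical.
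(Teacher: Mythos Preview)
Your proof is correct and rests on the same core idea as the paper: plug the constant control $M\equiv m$ into the fixed-point identity and compare growth rates. The only difference is that the paper short-circuits your asymptotic step by choosing from the start $x=\psi_m$ (equivalently, $y=z_m$ on the simplex): since $\psi_m$ is an eigenvector, $e^{tm}\psi_m=e^{\lambda(m)t}\psi_m$, and homogeneity of $\overline v$ gives $e^{\lambda(\M)t}\overline v(\psi_m)\geq e^{\lambda(m)t}\overline v(\psi_m)$ directly, without any limit. Your detour through the Perron--Frobenius asymptotics of $e^{tm}x$ for a generic $x\in K_+$ is unnecessary but harmless; it would matter only if one did not already know that $\overline v$ is continuous and positive on $K_0$, which you correctly invoke.
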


\begin{proof}
An immediate proof of this statement is obtained by choosing a constant control $M\equiv m $ in \eqref{eq:fixed point}. We denote by $z_{m}\in \S$ the corresponding eigenvector. Since $z_m$ is a stationary point for the dynamics, we have 
\begin{align*}
\lambda(\M) t  + \mathring u(z_{m }) & \geq \int_0^t  \l(z_{m } ,m )\, ds + \mathring u(z_{m }) \\
& \geq    \lambda(m) t +  \mathring u(z_{m }) \, . \end{align*}
Therefore $\lambda(\M) \geq \lambda(m)$. A similar proof is obtained by noticing that $u_m(y) = \log \la \phi_m, y \ra$ is a supersolution  of \eqref{eq:ergodic HJ}. 
\end{proof}

Our next result shows that in dimension $2$, the optimal growth is achieved
by constant controls.
\begin{theorem} \label{th:dim n=2}
Assume that $n=2$. Then 
\[\lambda(\M) = \max_{m\in \M } \lambda(m)\, .\]
\end{theorem}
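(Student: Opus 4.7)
The plan is as follows. The proposition above already yields $\lambda(\M)\geq \max_m \lambda(m)$, so only the reverse inequality needs to be proved. The essential feature of $n=2$ is that the simplex $\S$ is one-dimensional. Parametrizing $\S$ by $y\in[0,1]$ via $y\mapsto(y,1-y)$, the projected vector field $b(\cdot,m)$ reduces to a single scalar $f_m(y):=(my)_1 - \l(y,m)\,y$, which satisfies $f_m(0)=m_{12}>0$, $f_m(1)=-m_{21}<0$ (both strict by irreducibility) and admits a unique interior zero at the Perron direction $z_m\in(0,1)$. Thus every $b(\cdot,m)$ points strictly inward at the boundary of $\S$ and has a single attracting interior equilibrium.

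I plan to exploit time averaging along a calibrated trajectory. By Step~5 of the proof of Theorem~\ref{thm:optimal growth}, fix a calibrated trajectory $\gamma:[0,\infty)\to\S$ with an associated control $M^*\in L^\infty_{\mathrm{loc}}([0,\infty);\M)$, and form the empirical measures
\[
\mu_T := \frac{1}{T}\int_0^T \delta_{(\gamma(s),M^*(s))}\,ds
\]
on the compact product space $\S\times\M$. Extract a weak-$\ast$ cluster point $\mu$ of $(\mu_T)$ as $T\to\infty$. Two facts must then be verified. First, the calibration identity combined with the boundedness of $\mathring u$ on $\S$ yields in the limit $\int \l\, d\mu = \lambda(\M)$. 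Second, for any $\phi\in C^1(\S)$, telescoping $\phi(\gamma(T))-\phi(\gamma(0)) = \int_0^T \nabla\phi(\gamma)\cdot b(\gamma,M^*)\,ds$ and dividing by $T$ produces the infinitesimal invariance $\int \nabla\phi(y)\cdot b(y,m)\,d\mu(y,m)=0$.

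Next I disintegrate $\mu = \nu(dy)\otimes\kappa_y(dm)$ and introduce the averaged matrix $\bar m(y) := \int m\,\kappa_y(dm)\in\M$, which belongs to $\M$ by convexity. Since both $b(y,m)$ and $\l(y,m)$ are affine in $m$, the invariance reduces in the one-dimensional parametrization to $\int \phi'(y)\,F(y)\,d\nu(y)=0$ for every $\phi\in C^1([0,1])$, where $F(y):=f_{\bar m(y)}(y)$. As $\phi'$ ranges over all of $C([0,1])$, the Riesz representation theorem forces the signed measure $F\cdot\nu$ to vanish, and hence $F=0$ $\nu$-almost everywhere. This is the decisive step: in one dimension, invariance squeezes the support of $\nu$ onto the equilibrium locus $\{y = z_{\bar m(y)}\}$. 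On this set $\l(y,\bar m(y)) = \lambda(\bar m(y))\leq \max_m\lambda(m)$, whence
\[
\lambda(\M) = \int \l(y,\bar m(y))\,d\nu(y) = \int \lambda(\bar m(y))\,d\nu(y) \leq \max_{m\in\M}\lambda(m).
\]

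The main obstacle is arguably ensuring the existence of a calibrated trajectory on $[0,\infty)$ and handling the (possibly only measurable) selection $\bar m(y)$; the former is supplied by Step~5, while the latter enters only through integrals in which the linearity of $b$ and $\l$ in $m$ absorbs any measurability issue. It is worth emphasizing that the argument is dimension-sensitive: in dimension $n\geq 3$, invariant measures of the projected flow can be supported on nontrivial limit cycles, opening the door to $\lambda(\M) > \max_m\lambda(m)$, as illustrated by the three-dimensional examples discussed later in Section~\ref{sec-qualitative}.
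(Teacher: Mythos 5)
Your proposal is correct in substance, but it takes a genuinely different route from the paper. The paper's (skipped, only sketched) proof goes through the Pontryagin maximum principle and a qualitative study of the projected one-dimensional dynamics: every field $b(\cdot,m)$ on the segment $\S$ points toward its unique rest point $z_m$, so an extremal trajectory is monotone, must accumulate on some Perron direction, and evaluating the (constant) Hamiltonian along the calibrated/extremal curve there gives $\lambda(\M)=\lambda(m)$. You instead run an occupation-measure (Mather-measure / ergodic linear-programming) argument: take a forward calibrated ray from Step~5, form empirical measures, pass to an invariant limit $\mu$ with $\int \l \, d\mu=\lambda(\M)$ (using boundedness of $\mathring u$ from Step~6), average the control fibre-wise via convexity of $\M$ and linearity of $b$ and $\l$ in $m$, and observe that in one dimension the invariance relation $\int \phi'(y)F(y)\,d\nu(y)=0$ for all $\phi\in C^1$ forces the averaged drift $F$ to vanish $\nu$-a.e., so $\nu$ is carried by Perron equilibria and $\lambda(\M)\le\max_{m\in\M}\lambda(m)$; this cleanly isolates where $n=2$ enters, since in higher dimension invariant measures may sit on limit cycles, consistent with the examples of Section~IV. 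What each approach buys: yours is softer, avoiding PMP, differentiability of $\mathring u$ and switching-point issues, at the price of relying on the existence of calibrated curves on an interval of infinite length and on measurable disintegration; the paper's PMP route yields finer dynamical information (monotone convergence of optimal trajectories to the optimal Perron eigenvector) that the measure argument does not see. One point you should make explicit: Step~5 asserts existence of calibrated curves without specifying the time domain, so you need the forward version on $[0,\infty)$ — obtained by attaining the supremum in the fixed-point relation (compactness of relaxed controls, continuity of $\mathring u$) and concatenating via the dynamic programming principle; also note that the bound $\lambda(\bar m(y))\le\max_{m\in\M}\lambda(m)$ uses the standing convexity assumption on $\M$, since $\bar m(y)$ is a barycenter.
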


We skip the proof of this result, which exploits the Pontryagin maximum
principle, but rather give an heuristic argument.
The weak KAM statement, {\em i.e.} the existence of a pair $(\lambda, \mathring u)$ solution of the stationary Hamilton-Jacobi equation, generates an optimal vector field $b^*$. It is determined by the rule $b^*(y) = b(y, m^*)$ where $m^*\in \M$ realizes the maximum of the Hamiltonian $H$ in~\eqref{eq:ergodic HJ}.
Since Equation 
\eqref{eq:ergodic HJ}
 is stationary, the vector field $b^*$ is autonomous. However it is not defined everywhere on the simplex. For instance it cannot be defined on the points where $\mathring u$ is not differentiable nor on the points where the maximum value of $H$ is attained for several $m^*\in \M$. Anyway, up to this regularity issue, an autonomous vector field on the one-dimensional simplex is expected to exhibit fairly simple dynamics, {\em e.g.} convergence towards an equilibrium point. Simple arguments show that equilibria are in fact Perron eigenvectors. By optimality they have to be associated with the maximal possible eigenvalue for $m\in \M$.

A stronger result (where the unique optimal control is exhibited) can be found in~\cite{CoronGabrielShang} in a particular case coming from the modelling of the PMCA.

\if{
For any $m\in\M$ we denote by $z_m\in \S$ the projection of its Perron eigenvector.
By continuity of the eigenelements, the subset $Z = \{z_m\,  ,\, m\in \M\}$ is a segment of $\S$.
Consider the optimal control problem with the initial condition $y(0)=\min Z$ and let $M^*(t)\in\M$ be an optimal relaxed control.
We have from the Pontryagin maximum principle that the  associated trajectories $y(t)$ and $p(t)$ satisfy
\begin{align*} 
\dot y(t)  &= b(y(t), M^*(t)) \\
\dot p(t)  &= \la p(t), D_y b(y(t), M^*(t) )\ra  + D_y \l(y(t),M^*(t))\\
&\!\!\!\l (y(t),M^*(t)) + \la p(t),b(y\vc{(t)},M^*(t))\ra   =
\\
& \max_{m\in \M} \left( \l (y(t),m) + \la p(t),b(y(t),m)\ra  \right) 
 \end{align*}
Define the lower semi-continuous function
\begin{dmath*}\overline y(t)=\min_{m\in\M}\left\{ z_m ,\ m\in\argmax_{m'\in\M}(\l(y(t),m')+\la p(t),b(y(t),m')\ra)\right\}\end{dmath*}

By definition of $\overline y$ and from the choice of $y(0),$ we have $y(0)\leq\overline y(0).$
In the case $y(0)=\overline y(0),$ for any $m$ such that $y(0)=\overline y(0)=z_m$ we have
\[ \l (y(0),m) + \la p(0),b(y(0),m)\ra =  \l (z_m,m) = \lambda(m)\, . \] 
Recalling that the calibrated curve $y(t)$ travels with constant Hamiltonian 
\[ (\forall t\geq 0)\quad \l (y(t),m(t)) + \la p(t),b(y(t),m(t))\ra  = \lambda\,, \]
we conclude that $\lambda=\lambda(m).$

Assume $y(0)<\overline y(0),$ 
and define the first crossing time $T:=\sup\{t: (\forall s<t)\ y(s)<\overline y(s)\}\in(0,+\infty]$. If $T = +\infty$
then $y(t)$ is increasing and converges to a limit $y_\infty$ when $t\to+\infty$. Monotonicity of $y(t)$ is a consequence of the simple features of the   vector fields  $b(y,m)$ (for fixed $m\in \M$)  on the one-dimensional simplex $\S$: they all point to the corresponding eigenvector $z_m$. The latter is greater than $y(t)$ for all $t\geq 0$ by definition of $T = +\infty$. We have in addition $y_\infty\leq \max Z$ because $(\forall t\geq 0)$ $y(t)< \overline y(t) \leq \max Z$.
So there exists $m\in\M$ such that $y_\infty=z_m$. Assume by contradiction that $\liminf \overline y(t)> y_\infty = z_m$. Then by the same argument as above concerning the vector fields $b(y,m)$  (for fixed $m\in \M$), we deduce that the optimal trajectory $y(t)$ cannot be asymptotically stationary at $z_m$. Therefore $\overline y(t)$ accumulates at $z_m$ for a subsequence $(t_n)$. 
Passing to the limit $t_n\to+\infty$ in the Hamiltonian, we get as previously that $\lambda=\lambda(m)$.

It remains to consider the case where $0<T<+\infty$.
We have $y(T)\geq\overline y(T).$
If $y(T)=\overline y(T)$ we conclude as in the case  $y(0)=\overline y(0).$
On the contrary if $y(T)>\overline y(T)$ then $\overline y$ is discontinuous at $t=T$.
Denote by $m^-$ (resp. $m^+$) an element of $\M$ such that $z_{m^-}= \liminf_{t\to T^-}\overline y(t)$ (resp. $z_{m^+}=\overline y(T)$).
The whole segment $[m^-,m^+]$ is included in $\argmax_{m\in\M}(\l(y(T),m)+\la p(T),b(y(T),m)\ra)$. On the other hand  there exists $m\in[m^-,m^+]$ such that $y(T)=z_m$, simply because $y(T)\in[z_{m^+},z_{m^-}]$.
We deduce that $\lambda=\lambda(m)$.
}\fi

\

\subsection{Floquet perturbations of the maximal Perron eigenvalue.}
In this subsection, we give a few insights why we 
cannot hope 
for 
$\lambda(\M) = \max_{m\in  \M } \lambda(m)$ in dimension $n\geq 3$. We shall focus on the possible existence of limit cycles on the simplex which have a better reward than the maximal Perron eigenvalue. 

The arguments used to justify Theorem~\ref{th:dim n=2} 
cannot be transposed to a higher dimension.
\if{
First let us comment on the proof of Theorem \ref{th:dim n=2}. One intuition to rule out limit cycles in dimension $n = 2$ (one-dimensional simplex) is the following argument. From the relation $p(t) = D_y \mathring u(y(t))$ we notice that the optimal control at some given position $y\in \S_+$ is uniquely characterized by the Pontryagin maximum principle under the two following conditions: (i) $y$ belongs to an optimal trajectory for which it is not the starting point, and (ii) $y$ is not a "switching point" where the optimal control discontinuously jumps from one extremal of $\M$ to another. In fact condition (i) justifies the differentiability of $\mathring u$ at $y$~\cite{Fathi-book}.
We believe that conditions (i) and (ii) are generically true for almost all $y\in \S_+$. Therefore optimal trajectories "pick up" a unique control almost everywhere. This clearly rules out limit cycles on the one-dimensional simplex because such a cycle should go back and forth using different controls. The same argument cannot be transposed to higher dimension. }\fi
Another way to attack the problem is to test the optimal Perron eigenvalue against periodic perturbations. The question goes as follows: is it possible to find a larger Floquet eigenvalue in the neighbourhood of the maximal Perron eigenvalue? To address this issue we consider a simplified framework where $\M$ is a segment. We denote $\M = \{ G + \alpha F\, , \, \alpha \in [a,A]  \}$, and $\lambda(\alpha) = \lambda(G + \alpha F)$. We assume that there exists $\alpha^*\in (a,A)$ such that $\lambda(\alpha^*)$ is a local maximum of $\lambda(\alpha)$.

We assume for the sake of simplicity that the matrix $G + \alpha^* F$ is diagonalizable. 
We  denote by $(e_1^*,\cdots,e_n^*)$ and $(\phi_1^*,\cdots,\phi_n^*)$ the bases of right- and left- eigenvectors associated to the eigenvalues $\lambda_1^*>\lb_2^*\geq\cdots\geq \lb_n^*$ for the the best constant control $\alpha^*$, where $\lambda_1^* = \lambda(\alpha^*)$ is the Perron eigenvalue.
We recall the first order condition for $\lambda(\alpha^*)$ being a local maximum, 
\[ \phi_1^* F e_1^* = 0\, . \]

We consider small periodic perturbations of the best constant control: $\alpha(t) = \alpha^* + \epsilon \gamma(t)$, where $\gamma$ is a given $T$-periodic function. There exists a periodic eigenfunction $e_{\alpha^* + \epsilon \gamma}(t)$ associated to the Floquet eigenvalue $\lambda_F(\alpha^* + \epsilon \gamma)$ such that 
\begin{dmath*}\frac{\partial}{\partial t} e_{\alpha^* + \epsilon \gamma}(t) + \lb_F(\alpha^* + \epsilon\gamma ) e_{\alpha^* + \epsilon \gamma}(t) = (G+(\alpha^*+ \epsilon \gamma(t))F) e_{\alpha^* + \epsilon \gamma}(t)\, .\end{dmath*}
The following Proposition gives the second order condition for $\lambda(\alpha^*)$ being a local maximum relatively to periodic perturbations of the control. 
We denote by $\langle f\rangle_T$ the time average over one period,
\[\langle f\rangle_T=\f1T\int_0^T f(t)\,dt\, .\]

\begin{proposition}\label{prop:per:firstderiv}
The directional derivative of the Floquet eigenvalue vanishes at $\epsilon = 0$:
\begin{equation} \left.\dfrac{d \lb_F(\alpha^* + \epsilon \gamma)}{d\epsilon}\right|_{\epsilon = 0}  = 0\, . \label{eq:1st order}\end{equation}
Hence, $\alpha^*$ is also a critical point in the class of periodic controls. The second directional derivative of the Floquet eigenvalue writes at $\epsilon = 0$: 
\begin{equation}\label{eq:secondFloquet}
\left.\dfrac{d^2 \lb_F(\alpha^* + \epsilon \gamma)}{d\epsilon^2}\right|_{\epsilon = 0} = 2\sum_{i=2}^{n}  \langle\gamma_i^2\rangle_T\frac{(\phi_1^* F e_i^*)(\phi_i^* F e_1^*)}{ \lambda_1^*-\lambda_i^* }\, ,\end{equation}
where $\gamma_i(t)$ is the unique $T$-periodic solution of the relaxation ODE
\[
\dfrac{\dot{\gamma_i}(t)}{ \lambda_1^* - \lambda_i^* }  +  \gamma_i(t) =  \gamma(t) \, .
\]
\end{proposition}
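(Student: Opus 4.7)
The plan is a formal Floquet perturbation expansion in powers of $\epsilon$, combined with Fredholm-type solvability conditions enforcing $T$-periodicity at each order. I would write
\[ \lambda_F(\alpha^* + \epsilon\gamma) = \lambda_1^* + \epsilon\lambda^{(1)} + \epsilon^2\lambda^{(2)} + O(\epsilon^3)\]
and $e_{\alpha^*+\epsilon\gamma}(t) = e_1^* + \epsilon\, e^{(1)}(t) + \epsilon^2 e^{(2)}(t) + O(\epsilon^3)$, insert these expansions into the Floquet equation, and identify coefficients of each $\epsilon^k$. The order zero equation simply recovers $(G+\alpha^* F)e_1^* = \lambda_1^* e_1^*$, while the order one equation reads
\[ \dot e^{(1)}(t) + (\lambda_1^*\Id - G - \alpha^* F)e^{(1)}(t) = \gamma(t)\, F e_1^* - \lambda^{(1)} e_1^*. \]

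Next I would expand $e^{(1)}(t) = \sum_{i=1}^n c_i(t) e_i^*$ in the eigenbasis of $G+\alpha^* F$ and project on $\phi_j^*$, using the biorthonormality $\phi_j^* e_i^* = \delta_{ij}$ and $\phi_j^*(G+\alpha^*F)=\lambda_j^*\phi_j^*$. For $i=1$ one obtains $\dot c_1 = (\phi_1^* F e_1^*)\gamma(t) - \lambda^{(1)} = -\lambda^{(1)}$ by the first order optimality condition $\phi_1^* F e_1^* = 0$, so the periodicity of $c_1$ forces $\lambda^{(1)}=0$, which is \eqref{eq:1st order}. The free integration constant of $c_1$ is fixed to zero by a normalization $\phi_1^* e_\epsilon(0)=1$. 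For $i\geq 2$, the positive spectral gap $\lambda_1^* - \lambda_i^* > 0$ ensures that $\dot c_i + (\lambda_1^*-\lambda_i^*)c_i = (\phi_i^* F e_1^*)\gamma(t)$ has a unique $T$-periodic solution, which I would rewrite as $c_i(t) = \frac{\phi_i^* F e_1^*}{\lambda_1^* - \lambda_i^*}\gamma_i(t)$, where $\gamma_i$ is precisely the $T$-periodic function defined in the statement.

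The formula \eqref{eq:secondFloquet} then comes from the order $\epsilon^2$ equation. Using $\lambda^{(1)}=0$, it reads $\dot e^{(2)} + (\lambda_1^*\Id - G - \alpha^*F)e^{(2)} = \gamma(t)\,F e^{(1)}(t) - \lambda^{(2)} e_1^*$; projecting on $\phi_1^*$ and using $\phi_1^*(G+\alpha^*F-\lambda_1^*\Id)=0$ yields the scalar relation $\dot c_1^{(2)}(t) = \gamma(t)\phi_1^* F e^{(1)}(t) - \lambda^{(2)}$, so the solvability condition (periodicity of $c_1^{(2)}$) forces
\[ \lambda^{(2)} = \langle \gamma(t)\,\phi_1^* F e^{(1)}(t)\rangle_T = \sum_{i=2}^{n}\frac{(\phi_1^* F e_i^*)(\phi_i^* F e_1^*)}{\lambda_1^* - \lambda_i^*}\langle\gamma\gamma_i\rangle_T. \]
The short but crucial identity $\langle\gamma\gamma_i\rangle_T = \langle\gamma_i^2\rangle_T$, obtained by multiplying the defining ODE for $\gamma_i$ by $\gamma_i$ and averaging (the derivative term integrates to zero by $T$-periodicity), converts this into the stated form after using $\frac{d^2\lambda_F}{d\epsilon^2}\big|_0 = 2\lambda^{(2)}$.

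The main obstacle is less in any given algebraic step than in rigorously justifying the underlying analytic perturbation picture: one has to know that the Floquet pair depends smoothly on $\epsilon$ and that each order admits a periodic correction. I would address this by applying the analytic implicit function theorem in the Banach space of $T$-periodic continuous $\R^n$-valued functions, viewing the Floquet equation together with the normalization $\phi_1^* e_\epsilon(0)=1$ as a single operator equation in $(\lambda_F,e_\epsilon)$. The simplicity of $\lambda_1^*$ and the spectral gap $\lambda_1^* > \lambda_2^*$ ensure that the linearization at $\epsilon=0$ has a one-dimensional kernel spanned by $e_1^*$ and is invertible on the transverse complement, which is exactly what made the order $\epsilon$ equations uniquely solvable above.
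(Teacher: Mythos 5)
Your proposal is correct and follows essentially the same route as the paper's own argument: differentiate (equivalently, expand) the periodic Floquet eigenproblem in $\epsilon$, project on the left eigenvectors $\phi_1^*,\phi_i^*$, use periodicity to eliminate the time-derivative upon averaging, identify $\gamma_i$ through the relaxation ODE, and conclude with the identity $\langle\gamma\gamma_i\rangle_T=\langle\gamma_i^2\rangle_T$. The only addition is your implicit-function-theorem justification of smooth dependence of the Floquet pair on $\epsilon$, which the paper leaves implicit.
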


The idea of computing directional derivatives has been used in a similar context in~\cite{M2} for optimizing the Perron eigenvalue in a continuous model for cell division. See also~\cite{Clairambault}
for a more general discussion on the comparison between Perron and Floquet eigenvalues. 
\if{
\vc{
\begin{proof}[Proof of Proposition~\ref{prop:per:firstderiv}]
First we derive a formula for the first derivative of the Perron eigenvalue:
By definition we have
\[(G+\al F)e_\alpha=\lambda_P(\alpha)e_\alpha\,.\]
Deriving with respect to $\al$ we get
\[
\frac {d\lambda_P}{d\alpha}(\alpha) e_\alpha+\lambda_P(\alpha)\frac {d e_\alpha}{d\alpha} =Fe_\alpha+(G+\al F)\frac {d e_\alpha}{d\alpha}\, .
\]
Testing against the left- eigenvector $\phi_\alpha$ we obtain
\[\frac {d\lambda_P}{d\alpha}(\alpha)    =  \phi_\alpha Fe_\alpha \, .\]
Second, we write the Floquet eigenvalue problem corresponding to the periodic control $\alpha = \alpha^* + \epsilon\gamma$:
\[\frac{\partial}{\partial t} e_\alpha(t) + \lb_F(\alpha^* + \epsilon\gamma ) e_\alpha(t) = (G+(\al^*+ \epsilon \gamma(t))F) e_\alpha(t)\, .\]
Deriving this ODE with respect to $\epsilon$, we get
\begin{dmath}\label{eq:firstderiv_periodic} 
\frac{\partial}{\partial t} \dfrac{\partial e_\alpha}{\partial\epsilon}(t) + \dfrac{d \lb_F(\alpha^* + \epsilon\gamma )}{d\epsilon} e_\alpha(t) + \lb_F(\alpha^* + \epsilon\gamma ) \dfrac{\partial e_\alpha}{\partial\epsilon}(t)  =  \gamma(t)F e_\alpha(t) + (G+(\al^*+ \epsilon \gamma(t))F)\dfrac{\partial e_\alpha}{\partial\epsilon}(t)  \, .
\end{dmath}
Testing this equation against $\phi_1^*$ and evaluating at $\epsilon = 0$, we obtain
\[ \frac{\partial}{\partial t} \left( \phi_1^* \left.\dfrac{\partial e_\alpha}{\partial\epsilon}\right|_{\epsilon = 0}(t) \right) + \left.\dfrac{d \lb_F(\alpha^* + \epsilon \gamma)}{d\epsilon}\right|_{\epsilon = 0} = \gamma(t) \phi_1^*F e_1^*\, .\]
After integration over one period, we get
\begin{dmath*} \left.\dfrac{d \lb_F(\alpha^* + \epsilon \gamma)}{d\epsilon}\right|_{\epsilon = 0} = \left(\frac1\theta\int_0^\theta \gamma(t)\,dt\right)\phi_1^*F e_1^*= \la \gamma \ra_\theta \dfrac{d\lambda_P}{d\alpha} (\al^*) = 0\, ,\end{dmath*}
which is the first order condition \eqref{eq:1st order}.

Next, we test~\eqref{eq:firstderiv_periodic} against another left-eigenvector $\phi_i^*$ and we evaluate at $\epsilon = 0$.
We obtain the following equation satisfied by $\gamma_i(t) = (\lambda_1^* - \lambda_i^*)\phi_i^* \dfrac{\partial e_\alpha}{\partial\epsilon}(t)(\phi_i^* Fe_1^*)^{-1}$:
\begin{equation} 
\frac1{\lambda_1^* - \lambda_i ^*} \frac \partial {\partial t}{\gamma_i}(t)  +  \gamma_i(t) =  \gamma(t)  \, .
\label{eq:gamma_i}
\end{equation}
We differentiate~\eqref{eq:firstderiv_periodic} with respect to $\epsilon$. This yields
\begin{dmath*}
\frac{\partial}{\partial t} \dfrac{\partial^2 e_\alpha}{\partial\epsilon^2}(t) + \dfrac{d^2 \lb_F(\alpha^* + \epsilon\gamma )}{d\epsilon^2} e_\alpha(t) + 2 \dfrac{d \lb_F(\alpha^* + \epsilon\gamma )}{d\epsilon} \dfrac{\partial  e_\alpha}{\partial\epsilon }(t)   +  \lb_F(\alpha^* + \epsilon\gamma ) \dfrac{\partial^2 e_\alpha}{\partial\epsilon^2}(t) 
\\ = 2 \gamma(t)F \dfrac{\partial  e_\alpha}{\partial\epsilon }(t) + (G+(\al^*+ \epsilon \gamma(t))F)\dfrac{\partial^2 e_\alpha}{\partial\epsilon^2}(t)  \, .
\end{dmath*}
Testing this equation against $\phi_1^*$ and evaluating at $\epsilon = 0$, we find
\begin{dmath}
\frac{\partial}{\partial t} \left( \phi_1^* \left.\dfrac{\partial^2 e_\alpha}{\partial\epsilon^2}\right|_{\epsilon = 0}(t) \right) +  \left.\dfrac{d^2 \lb_F(\alpha^* + \epsilon \gamma)}{d\epsilon^2}\right|_{\epsilon = 0}    = 2\gamma(t) \phi_1^* F   \left.\dfrac{\partial e_\alpha}{\partial\epsilon}\right|_{\epsilon = 0}(t)  \, . 
\label{eq:extremal Floquet}
\end{dmath}
We decompose the unknown $\frac{\partial e_\alpha}{\partial\epsilon}(t)$ along the basis $(e_1^*,e_2^*,e_3^*)$
\[  \frac{\partial e_\alpha}{\partial\epsilon} (t) = \sum_{i = 1}^3 \gamma_i(t)\frac{(\phi_i^* Fe_1^*)}{\lambda_1^* - \lambda_i^*}  e_i^*  \, .\]
In particular, we have
\[ \phi_1^* F    \dfrac{\partial e_\alpha}{\partial\epsilon} (t) = \sum_{i=2}^{3}\gamma_i(t)\frac{(\phi_i^* F e_1^*)}{\lambda_1^* - \lambda_i^*}(\phi_1^* F e_i^*)\, ,   \]
since $\phi_1^* Fe_1^* = 0$ by optimality.
To conclude, we integrate \eqref{eq:extremal Floquet} over one period,
\begin{equation*}
\left.\dfrac{d^2 \lb_F(\alpha^* + \epsilon \gamma)}{d\epsilon^2}\right|_{\epsilon = 0}  =
 2 \sum_{i=2}^{3}\langle\gamma\gamma_i\rangle_\theta \dfrac{(\phi_i^* F e_1^*)( \phi_1^* F e_i^*)}{\lambda_1^* - \lambda_i^*} \,.
\end{equation*}
We conclude thanks to the following identity derived from \eqref{eq:gamma_i}: $\la \gamma_i ^2\ra_\theta = \langle\gamma\gamma_i\rangle_\theta$.
\end{proof}
}
}\fi

Taking $\gamma\equiv1$ in Equation~\eqref{eq:secondFloquet}, we get the second derivative of the Perron eigenvalue at $\alpha^*$,
\begin{equation}\label{eq:secondPerron}\dfrac{d^2\lambda}{d\alpha^2}(\alpha^*)=2\sum_{i=2}^{n}\dfrac{(\phi_1^* F e_i^*)(\phi_i^* F e_1^*)}{\lambda_1^*-\lambda_i^*} \, ,\end{equation}
which is nonpositive since $\alpha^*$ is a maximum point. 
Therefore we are led to the following question: is it possible to construct counter-examples such that the sum 
\eqref{eq:secondFloquet} is positive for some periodic control $\gamma$, whereas the sum \eqref{eq:secondPerron} is nonpositive? This is clearly not possible in dimension $n = 2$ because the sum in \eqref{eq:secondFloquet} is reduced to a single nonpositive term by \eqref{eq:secondFloquet}. 
For $n\geq3,$ considering periodic perturbations $\gamma(t)=\cos(\omega t),$ we get the formula
\[\left.\dfrac{d^2 \lb_F(\alpha^* + \epsilon \gamma)}{d\epsilon^2}\right|_{\epsilon = 0}  \!\!\!\!=
 \sum_{i=2}^{n} \dfrac{\lambda_1^* - \lambda_i^*}{\omega^2+(\lambda_1^* - \lambda_i^*)^2}(\phi_i^* F e_1^*)( \phi_1^* F e_i^*) \,.\]
An asymptotic expansion when $\omega\to+\infty$ indicates that if the condition
\begin{equation}\label{eq:secondFloquetomega}
\sum_{i=2}^{n} (\lambda_1^* - \lambda_i^*)(\phi_i^* F e_1^*)( \phi_1^* F e_i^*)>0
\end{equation}
is satisfied, then~\eqref{eq:secondFloquet} is positive for some $\omega$ large enough.




\subsection{Legendre type condition for local optimality on short times}

Within the framework described in the previous section, we introduce the endpoint mapping
\[F_T:\left\{\begin{array}{rcl}
L^\infty(0,T)&\to& K,\\
\alpha(\cdot)&\mapsto& x(T),
\end{array}\right.\]
which maps a control $\alpha\in L^\infty(0,T)$ to the terminal value $x(T)$ of the corresponding trajectory, i.e. the solution of the ODE
\[\dot x(s)=(G+\alpha(s) F)x(s)\,.\]
We analyse in the following the behaviour of this mapping in the neighbourhood of the best constant control $\alpha^*$ and its associated trajectory $x(t)=e^{\lambda_1^*t}e_1^*,$. Moreover  we make the link with the computations on the Floquet eigenvalue in the previous section.
Consider a variation $\alpha(\cdot)=\alpha^*+\epsilon\,\gamma(\cdot)\in L^\infty(0,T)$ (not necessarily periodic) and define the quadratic form
\[Q(\gamma):=\phi_1^*(D^2_{\alpha^*}F_T)(\gamma,\gamma).\]
\if{The corresponding trajectory $y$ is the solution to the equation
\[\dot y=(G+\alpha^*F)y+\epsilon\,\gamma Fy,\qquad y(0)=e_1^*,\]
and we deduce the asymptotic expansion of the endpoint mapping
\begin{align}
F_T(\alpha)&=x(T)+\epsilon\int_0^T\gamma(t)e^{(G+\alpha^*F)(T-t)}Fx(t)\,dt
\nonumber%
\\
&\!\!\!\!\!\!\!\!\!\!\!\!\!\!\!\!\!\!\!+\epsilon^2\!\!\!\int_0^T\!\!\!\!\int_0^t\!\!\!\!\gamma(t)\gamma(s)e^{(G+\alpha^*F)(T-t)}Fe^{(G+\alpha^*F)(t-s)}Fx(s)\,dsdt\nonumber\\ 
& +O(\epsilon^3).
\label{eq:expansion}
\end{align}
Then the first variation of $F_T$ is given by
\[(D_{\alpha^*}F_T)\gamma=\int_0^T\gamma(t)e^{(G+\alpha^*F)(T-t)}Fx(t)\,dt.\]
The control $\alpha^*$ is said to be a critical point of $F_T$ if there exists a Lagrange multiplier $\psi\in\R^n$ such that
\[\psi(D_{\alpha^*}F_T)\gamma=0,\quad\forall\gamma,\]
i.e.,
\begin{equation}\label{first_order_condition}\psi e^{(G+\alpha^*F)(T-t)}Fx(t)=0,\quad\forall t\in[0,T].\end{equation}
Equivalently we can say that $\alpha^*$ is a critical point if and only if there exists a solution $p(t)$ to the adjoint equation
\[\dot p=-p(G+\alpha^* F)\]
such that
\begin{equation}\label{first_order_condition_PMP}\forall t\in[0,T],\quad p(t)Fx(t)=0.\end{equation}
The dual Perron eigenvector $\phi_1^*$ associated to $\alpha^*$ satisfies condition~\eqref{first_order_condition}
\[\phi_1^* e^{(G+\alpha^*F)(T-t)}Fx(t)=e^{\lambda_1^*t}\phi_1^*Fe_1^*=0.\]
In terms of our optimal control problem, that means that  the control $\alpha^*$ is critical when the objective we want to maximize is $\phi_1^*x(T).$
Condition~\eqref{first_order_condition_PMP} is satisfied with $p(t)=e^{\lambda_1^*(T-t)}\phi_1^*.$

We go further to second order condition.
The asymptotic expansion~\eqref{eq:expansion} yields the expression for the second differential
\begin{dmath*}(D^2_{\alpha^*}F_T)(\gamma,\gamma)=
2\!\!\int_0^T\!\!\!\!\int_0^t\!\!\!\!\!\gamma(t)\gamma(s)e^{(G+\alpha^*F)\!(T-t)}\!F\!e^{(G+\alpha^*F)\!(t-s)}\!F\!x(s)ds\!dt.\end{dmath*}
Take the Lagrange multiplier $\phi_1^*$ and define the corresponding quadratic form
\begin{align}
Q(\gamma)&:=\phi_1^*(D^2_{\alpha^*}F_T)(\gamma,\gamma)\nonumber\\
&\!\!\!\!\!\!\!\!\!\!\!\!\!\!\!=2\!\!\int_0^T\!\!\!\!\!\int_0^t\!\!\!\!\gamma(t)\gamma(s)\,\!\phi_1^*\!e^{(G+\alpha^*F)\!(T-t)}\!F\!e^{(G+\alpha^*F)\!(t-s)}\!F\!x(s)dsdt\nonumber\\
&\!\!\!\!\!\!\!\!\!\!\!\!\!\!\!=2e^{\lambda_1^*T}\!\!\!\int_0^T\!\!\!\!\!\int_0^t\!\!\!\!\gamma(t)\gamma(s)\,\phi_1^*F\!e^{(G+\alpha^*F-\lambda_1^*I)\!(t-s)}F\!e_1^*dsdt.\label{eq:quadratic_form}
\end{align}}\fi
A straightforward computation gives the expression
\[Q(\gamma)=2e^{\lambda_1^*T}\!\!\!\int_0^T\!\!\!\!\!\int_0^t\!\!\!\!\gamma(t)\gamma(s)\,\phi_1^*F\!e^{(G+\alpha^*F-\lambda_1^*I)\!(t-s)}F\!e_1^*dsdt\]
Looking at the leading terms in $Q$ when $T$ is small, we get a sufficient condition for the control $\alpha^*$ to be locally optimal for small times on the hyperplane
\[L^\infty_0:=\Bigl\{\gamma\in L^\infty(0,T),\ \int_0^T\gamma(t)\,dt=0\Bigr\}.\]

\begin{proposition}\label{prop:Legendre}
If the condition
\begin{equation}\label{legendre_condition}\phi_1^*F(G+\alpha^*F-\lambda_1^*I)Fe_1^*>0.\end{equation}
is satisfied, then the quadratic form $Q$ restricted to $L^\infty_0$ is negative definite for short times with respect to the negative Sobolev space $H^{-1}(0,T)$, that is to say
\[\exists\delta>0,\ \exists\epsilon>0,\ \forall T\in(0,\epsilon),\quad Q|_{L^\infty_0}(\gamma)\leq-\delta\|\gamma\|_{H^{-1}}^2.\]
\end{proposition}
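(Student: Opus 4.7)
The plan is to reduce $Q$ to an explicit quadratic form in the primitive $\Gamma(t) := \int_0^t \gamma(s)\,ds$, which lies in $H^1_0(0,T)$ because $\gamma \in L^\infty_0$ has zero mean. Setting $A := G + \alpha^* F - \lambda_1^* I$ and introducing the scalar kernel $k(\tau) := \phi_1^* F\, e^{A\tau}\, F e_1^*$, I would first symmetrize the double integral defining $Q$ (exchanging the half-squares $\{s\leq t\}$ and $\{t\leq s\}$) to rewrite
\[ Q(\gamma) \;=\; e^{\lambda_1^* T} \int_0^T \!\int_0^T \gamma(t)\,\gamma(s)\, k(|t-s|)\, ds\, dt. \]
Observe that $k'(0) = \phi_1^* F A F e_1^*$, so the hypothesis \eqref{legendre_condition} is precisely the positivity $k'(0) > 0$. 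Note also that the would-be $k(0)$-contribution $k(0)(\int_0^T\gamma)^2$ vanishes automatically on $L^\infty_0$.

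The heart of the argument is then a double integration by parts, transferring both derivatives $\gamma = \Gamma'$ onto the kernel. Since $\Gamma(0) = \Gamma(T) = 0$, all boundary terms vanish. Splitting the inner $s$-integral at $s=t$ to avoid the absolute value, integrating by parts in $s$, then in $t$, and summing the Leibniz boundary contributions along the diagonal $\{s=t\}$ yields the identity
\[ \int_0^T \!\int_0^T \gamma(t)\gamma(s)\, k(|t-s|)\, ds\, dt \;=\; -2 k'(0)\, \|\Gamma\|_{L^2}^2 \;-\; \int_0^T \!\int_0^T \Gamma(t)\Gamma(s)\, k''(|t-s|)\, ds\, dt. \]
The first term isolates the sign $-k'(0) < 0$; the remainder involves $k''(\tau) = \phi_1^* F A^2 e^{A\tau} F e_1^*$, which is uniformly bounded on $[0,T_0]$ for any fixed $T_0>0$.

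Next, I would estimate the remainder by $\|k''\|_{L^\infty([0,T])} \cdot T \cdot \|\Gamma\|_{L^2}^2$, using the elementary Cauchy--Schwarz bound $\int_0^T \!\int_0^T |\Gamma(t)\Gamma(s)|\, ds\, dt = \bigl(\int_0^T |\Gamma|\bigr)^2 \leq T \|\Gamma\|_{L^2}^2$. For $T$ small enough the leading term dominates and
\[ Q(\gamma) \;\leq\; - e^{\lambda_1^* T}\, k'(0)\, \|\Gamma\|_{L^2}^2. \]
To convert this into an $H^{-1}$ estimate, recall that for every $\phi \in H^1_0(0,T)$, integration by parts and Cauchy--Schwarz give $|\langle \gamma,\phi\rangle| = |\int_0^T \Gamma\, \phi'| \leq \|\Gamma\|_{L^2}\, \|\phi\|_{H^1_0}$, hence $\|\gamma\|_{H^{-1}(0,T)} \leq \|\Gamma\|_{L^2(0,T)}$ on $L^\infty_0$. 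Combining the two inequalities delivers the desired bound $Q(\gamma) \leq -\delta\, \|\gamma\|_{H^{-1}}^2$ for $\delta = k'(0)/2$ (say), uniformly for $T$ small enough.

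The main technical obstacle I anticipate is the bookkeeping of the double integration by parts: differentiating $k(|t-s|)$ in $t$ produces $+k'$ on the region $\{s<t\}$ and $-k'$ on $\{s>t\}$, and the factor $-2k'(0)$ in the leading term emerges precisely as the sum of these two diagonal Leibniz terms. By contrast, the remainder estimate and the comparison $\|\gamma\|_{H^{-1}} \leq \|\Gamma\|_{L^2}$ on $L^\infty_0$ are standard and robust.
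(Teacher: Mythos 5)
Your argument is correct and is essentially the paper's own proof: the paper likewise isolates the leading behaviour of $Q$ for small $T$ by two integrations by parts, with the zeroth-order term $\phi_1^*F^2e_1^*\bigl(\int_0^T\gamma(t)\,dt\bigr)^2$ vanishing on $L^\infty_0$ and the first-order term producing $-\,\phi_1^*F(G+\alpha^*F-\lambda_1^*I)Fe_1^*\,\|\Gamma\|_{L^2}^2$ with $\Gamma(t)=\int_0^t\gamma(s)\,ds$, identified with the squared $H^{-1}$ norm of $\gamma$. The only difference is that you keep the full kernel $k(|t-s|)$ and derive an exact identity whose $k''$-remainder you bound explicitly by $T\,\|k''\|_{L^\infty}\|\Gamma\|_{L^2}^2$, whereas the paper Taylor-expands the kernel and leaves the domination of the higher-order terms implicit --- a welcome extra bit of rigour, not a different method.
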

We skip the proof of this result.
\if{
\begin{proof}
The leading term (when $T\to0$) in~\eqref{eq:quadratic_form} vanishes on $L^\infty_0$
\[2\int_0^T\int_0^t\beta(t)\beta(s)\,\phi^*F^2e_1^*\,ds\,dt
=\biggl(\int_0^T\beta(t)\,dt\biggr)^2\phi^*F^2e_1^*.\]
The following dominating term is
\[2e^{\lambda^*T}\int_0^T\int_0^t\beta(t)\beta(s)(t-s)\,\phi^*F(G+\alpha^*F-\lambda^*I)Fe_1^*\,ds\,dt\]
and two integrations by parts give
\begin{dmath*}\int_0^T\beta(t)\int_0^t\beta(s)(t-s)\,ds\,dt=\int_0^T\beta(t)\,dt\int_0^T B(t)\,dt-\int_0^TB^2(t)\,dt\end{dmath*}
where $B(t)=\int_0^t\beta(s)\,ds.$
We get the result since the first term on the r.h.s. vanishes on $L^\infty_0$ and the second term is nothing but the squared $H^{-1}$ norm of $\beta.$ 
\end{proof}}\fi


Condition~\eqref{legendre_condition} is the so-called {\it generalized Legendre condition} of our problem.
The generalized Legendre condition appears in the study of optimality for totally singular extremals, i.e. when the second derivative of the Hamiltonian is identically zero along the trajectory.
A typical example is provided by the single-input affine control systems, namely, $\dot x(t)=f_0(x(t))+\alpha(t)f_1(x(t)),$ where $\alpha(t)\in\R,$ and $f_0,f_1$ are smooth vector fields.
In this case the generalized Legendre condition writes
\[\langle p(\cdot)[f_1,[f_0,f_1]]x(\cdot)\rangle>0\]
where $[\cdot,\cdot]$ is the Lie bracket of vector fields.
Our linear control system belongs to this class of problems, and straightforward computations show that for $x(t)=e^{\lambda_1^*t}e_1^*$ and $p(t)=e^{-\lambda_1^*t}\phi_1^*,$ we have $p(\cdot)[F,[G,F]]x(\cdot)=\phi_1^*F(G+\alpha^*F-\lambda_1^*I)Fe_1^*.$
The generalized Legendre condition ensures that the quadratic form $Q|_{\text{Ker}D_{\alpha^*}F_T}$ is definite negative.
Then it allows to deduce that the trajectory $x(\cdot)$ is locally optimal for short final times $T$ in the $C^0$ topology (we refer to~\cite{AgrachevSachkov}
 and~\cite{BonnardCaillauTrelat} for details).

Here we proved the negativity of $Q$ on $L^\infty_0$ instead of $\text{Ker}D_{\alpha^*}F_T$ under the condition \eqref{prop:Legendre}. Our aim is to make clearer the link with the computation of the second derivative of the Floquet eigenvalue.
It follows from the previous section that the second derivative of $\lambda_F$ is positive for periodic controls $\cos(\omega t)$ when $\omega$ is large if condition~\eqref{eq:secondFloquetomega} is satisfied.
Considering $T$ small and $\omega=k\frac{2\pi}{T}$ with $k\geq1,$ we have that $\cos(\omega t)\in L^\infty_0$ and $\omega\to+\infty$ when $k\to+\infty.$
The following proposition points out the consistency between conditions~\eqref{legendre_condition} and~\eqref{eq:secondFloquetomega}.
\begin{proposition}\label{prop:equivalence_Legendre_Floquet}
We have
\[\phi_1^*F(G+\alpha^*F-\lambda_1^*I)Fe_1^*=-\sum_{i=2}^n(\lambda_1^*-\lambda_i^*)(\phi_1^*Fe_i^*)(\phi_i^*Fe_1^*).\]
\end{proposition}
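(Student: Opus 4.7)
The plan is to insert the spectral resolution of $G+\alpha^*F$ between the two copies of $F$ on the left-hand side, and then invoke the first-order optimality condition $\phi_1^*Fe_1^*=0$ to discard the diagonal term.

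First, I would exploit the diagonalizability assumption: since $G+\alpha^*F$ has eigenvalues $\lambda_1^*,\dots,\lambda_n^*$ with right eigenvectors $e_i^*$ and left eigenvectors $\phi_i^*$, one can normalize so that $\phi_i^*e_j^*=\delta_{ij}$. This yields the resolution of identity $\sum_{i=1}^n e_i^*\phi_i^*=I$ and the spectral decomposition
\[
G+\alpha^*F=\sum_{i=1}^n \lambda_i^* \, e_i^*\phi_i^*.
\]
Subtracting $\lambda_1^*I=\sum_i \lambda_1^* e_i^*\phi_i^*$ gives
\[
G+\alpha^*F-\lambda_1^*I=\sum_{i=1}^n(\lambda_i^*-\lambda_1^*)\,e_i^*\phi_i^*,
\]
where the $i=1$ term is already zero.

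Next, I would plug this identity into $\phi_1^*F(G+\alpha^*F-\lambda_1^*I)Fe_1^*$. The sandwich form immediately factors as
\[
\phi_1^*F(G+\alpha^*F-\lambda_1^*I)Fe_1^*=\sum_{i=1}^n(\lambda_i^*-\lambda_1^*)(\phi_1^*Fe_i^*)(\phi_i^*Fe_1^*).
\]
The term $i=1$ drops out, either because of the factor $\lambda_1^*-\lambda_1^*=0$ or, alternatively, because the first-order optimality condition at $\alpha^*$ already forces $\phi_1^*Fe_1^*=0$. Rearranging the sign of the remaining terms yields exactly the desired formula
\[
\phi_1^*F(G+\alpha^*F-\lambda_1^*I)Fe_1^*=-\sum_{i=2}^n(\lambda_1^*-\lambda_i^*)(\phi_1^*Fe_i^*)(\phi_i^*Fe_1^*).
\]

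There is essentially no obstacle here: the identity is purely algebraic once the spectral decomposition is available. The only delicate issue is that the proof as written requires diagonalizability of $G+\alpha^*F$, an assumption that has already been made explicitly for this subsection; if one wished to remove it, one would need to replace the spectral decomposition by a Jordan form, in which case the off-diagonal nilpotent part would contribute additional terms that still cancel against $\phi_1^*Fe_1^*=0$, but the bookkeeping would be heavier.
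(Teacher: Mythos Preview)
Your proof is correct and follows essentially the same route as the paper's: the paper expands $Fe_1^*=\sum_{i}(\phi_i^*Fe_1^*)e_i^*$ in the eigenbasis, applies $G+\alpha^*F$ and $\lambda_1^*I$ separately, and then subtracts, which is just a slightly different bookkeeping of the same spectral decomposition you insert directly. Both arguments reduce immediately to the identity $\sum_{i}(\lambda_i^*-\lambda_1^*)(\phi_1^*Fe_i^*)(\phi_i^*Fe_1^*)$ with the $i=1$ term vanishing.
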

This relation is instructive since it emphazises the relation between a condition for optimality for small times and a condition for optimality with high frequences.
\if{
\begin{proof}
We decompose $Fe_1^*$ on the basis $(e_i^*)_{1\leq i\leq n}$
\[Fe_1^*=\sum_{i=1}^n(\phi_i^*Fe_1^*)\,e_i^*.\]
Then we get
\[\lambda^*\phi_1^*F^2e_1^*=\lambda^*\sum_{i=1}^n(\phi_i^*Fe_1^*)(\phi_1^*Fe_i^*)\]
and
\[\phi_1^*F(G+\alpha^*F)Fe_1^*=\sum_{i=1}^n\lambda_i^*(\phi_i^*Fe_1^*)(\phi_1^*Fe_i^*)\]
\end{proof}}\fi

\if{\subsection*{Floquet perturbations of the maximal Perron eigenvalue.}
In this subsection, we give a few insights why we 
cannot hope 
for 
$\lambda = \max_{m\in  \co(\M) } \lambda(m)$ in dimension $n\geq 3$. We shall focus on the possible existence of limit cycles on the simplex which have a better reward than the maximal Perron eigenvalue. 

The arguments used to justify of Theorem~\ref{th:dim n=2} 
cannot be transposed to a higher dimension.
\if{
First let us comment on the proof of Theorem \ref{th:dim n=2}. One intuition to rule out limit cycles in dimension $n = 2$ (one-dimensional simplex) is the following argument. From the relation $p(t) = D_y \mathring u(y(t))$ we notice that the optimal control at some given position $y\in \S_+$ is uniquely characterized by the Pontryagin maximum principle under the two following conditions: (i) $y$ belongs to an optimal trajectory for which it is not the starting point, and (ii) $y$ is not a "switching point" where the optimal control discontinuously jumps from one extremal of $\M$ to another. In fact condition (i) justifies the differentiability of $\mathring u$ at $y$~\cite{Fathi-book}.
We believe that conditions (i) and (ii) are generically true for almost all $y\in \S_+$. Therefore optimal trajectories "pick up" a unique control almost everywhere. This clearly rules out limit cycles on the one-dimensional simplex because such a cycle should go back and forth using different controls. The same argument cannot be transposed to higher dimension. }\fi
Another way to attack the problem is to test the optimal Perron eigenvalue against periodic perturbations. The question goes as follows: is it possible to find a larger Floquet eigenvalue in the neighbourhood of the maximal Perron eigenvalue? To address this issue we consider a simplified framework: we assume that $n=3$ and that $\M$ is a segment. We denote $\M = \{ G + \alpha F\, , \, \alpha \in [a,A]  \}$, and $\lambda(\alpha) = \lambda(G + \alpha F)$. We assume that there exists $\alpha^*\in (a,A)$ such that $\lambda(\alpha^*)$ is a local maximum of $\lambda(\alpha)$.

We assume for the sake of simplicity that the matrix $G + \alpha^* F$ is real diagonalizable. 
We  denote by $(e_1^*,e_2^*,e_3^*)$ and $(\phi_1^*,\phi_2^*,\phi_3^*)$ the bases of right- and left- eigenvectors associated to the eigenvalues $\lambda_1^*>\lb_2^*\geq \lb_3^*$ for the the best constant control $\alpha^*$, where $\lambda_1^* = \lambda(\alpha^*)$ is the Perron eigenvalue.
Recall the first order condition for $\lambda(\alpha^*)$ being a local maximum, 
\[ \phi_1^* F e_1^* = 0\, . \]

We consider small periodic perturbations of the best constant control: $\alpha(t) = \alpha^* + \epsilon \gamma(t)$, where $\gamma$ is a given $T$-periodic function. There exists a periodic eigenfunction $e_{\alpha^* + \epsilon \gamma}(t)$ associated to the Floquet eigenvalue $\lambda_F(\alpha^* + \epsilon \gamma)$ such that 
\begin{dmath*}\frac{\partial}{\partial t} e_{\alpha^* + \epsilon \gamma}(t) + \lb_F(\alpha^* + \epsilon\gamma ) e_{\alpha^* + \epsilon \gamma}(t) = (G+(\alpha^*+ \epsilon \gamma(t))F) e_{\alpha^* + \epsilon \gamma}(t)\, .\end{dmath*}
The following Proposition gives the second order condition for $\lambda(\alpha^*)$ being a local maximum relatively to periodic perturbations of the control. 
We denote by $\langle f\rangle_T$ the time average over one period,
\[\langle f\rangle_T=\f1T\int_0^T f(t)\,dt\, .\]

\begin{proposition}\label{prop:per:firstderiv}
The directional derivative of the Floquet eigenvalue vanishes at $\epsilon = 0$:
\begin{equation} \left.\dfrac{d \lb_F(\alpha^* + \epsilon \gamma)}{d\epsilon}\right|_{\epsilon = 0}  = 0\, . \label{eq:1st order}\end{equation}
Hence, $\alpha^*$ is also a critical point in the class of periodic controls. The second directional derivative of the Floquet eigenvalue writes at $\epsilon = 0$: 
\begin{equation}\label{eq:secondFloquet}
\left.\dfrac{d^2 \lb_F(\alpha^* + \epsilon \gamma)}{d\epsilon^2}\right|_{\epsilon = 0} = 2\sum_{i=2}^{3}  \langle\gamma_i^2\rangle_T\frac{(\phi_1^* F e_i^*)(\phi_i^* F e_1^*)}{ \lambda_1^*-\lambda_i^* }\, ,\end{equation}
where $\gamma_i(t)$ is the unique $T$-periodic solution of the relaxation ODE
\[
\dfrac{\dot{\gamma_i}(t)}{ \lambda_1^* - \lambda_i^* }  +  \gamma_i(t) =  \gamma(t) \, .
\]
\end{proposition}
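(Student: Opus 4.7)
The plan is to differentiate the Floquet eigenvalue equation
\[\partial_t e_\alpha(t) + \lb_F(\alpha)\, e_\alpha(t) = (G+\alpha(t) F)\, e_\alpha(t)\,,\quad \alpha(t) = \alpha^* + \epsilon\gamma(t)\,,\]
twice in $\epsilon$ at $\epsilon=0$, and project the successive perturbation equations onto the dual eigenbasis $\{(\phi_i^*,e_i^*,\lambda_i^*)\}$ of $G+\alpha^* F$ normalized by $\phi_i^* e_j^* = \delta_{ij}$. A preliminary point is regularity: since $\lambda_1^*$ is a simple eigenvalue of $G+\alpha^* F$, the associated Floquet multiplier of the unperturbed periodic system is a simple eigenvalue of the monodromy operator, so that, up to a gauge fixing such as $\phi_1^* e_\alpha(0) = 1$, analytic perturbation theory provides smooth $\epsilon$-dependence of the pair $(\lb_F(\alpha), e_\alpha)$ near $\epsilon = 0$. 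Writing $\dot e := \partial_\epsilon e_\alpha|_{\epsilon = 0}$ and $\lambda' := \partial_\epsilon \lb_F|_{\epsilon = 0}$, the first-order perturbation equation reads
\[\partial_t \dot e + \lambda'\, e_1^* + \lambda_1^* \dot e = \gamma(t)\, F e_1^* + (G+\alpha^* F)\, \dot e\,.\]

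For the first-order formula~\eqref{eq:1st order}, I would test this equation against $\phi_1^*$: using $\phi_1^*(G+\alpha^* F) = \lambda_1^*\phi_1^*$, most terms cancel and we are left with $\frac{d}{dt}(\phi_1^* \dot e(t)) + \lambda' = \gamma(t)(\phi_1^* F e_1^*)$. Averaging over one period of the $T$-periodic function $\phi_1^* \dot e$, the total derivative vanishes and we get $\lambda' = \langle \gamma\rangle_T(\phi_1^* F e_1^*)$. The Perron first-order optimality condition $\phi_1^* F e_1^* = 0$ then yields $\lambda' = 0$. Next, to prepare the second-order formula, I would test the same first-order perturbation equation against $\phi_i^*$ for $i\geq 2$. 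Since $\phi_i^* e_1^* = 0$, this produces the scalar ODE
\[\frac{d}{dt}(\phi_i^* \dot e(t)) + (\lambda_1^*-\lambda_i^*)(\phi_i^* \dot e(t)) = \gamma(t)(\phi_i^* F e_1^*)\,.\]
Comparing with the relaxation ODE defining $\gamma_i$, which is equivalent to $\dot\gamma_i + (\lambda_1^*-\lambda_i^*)\gamma_i = (\lambda_1^*-\lambda_i^*)\gamma$, the unique $T$-periodic solution is $\phi_i^*\dot e(t) = \frac{\phi_i^* F e_1^*}{\lambda_1^*-\lambda_i^*}\,\gamma_i(t)$.

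For~\eqref{eq:secondFloquet}, I would differentiate the Floquet equation once more and set $\ddot e := \partial_\epsilon^2 e_\alpha|_{\epsilon=0}$, $\lambda'' := \partial_\epsilon^2 \lb_F|_{\epsilon=0}$. Using $\lambda' = 0$, the second-order perturbation equation is
\[\partial_t \ddot e + \lambda''\, e_1^* + \lambda_1^* \ddot e = 2\gamma(t) F\dot e + (G+\alpha^* F)\,\ddot e\,.\]
Testing against $\phi_1^*$ and averaging over one period kills the $\partial_t$ term by periodicity, giving $\lambda'' = 2\langle \gamma(t)\,\phi_1^* F\dot e(t)\rangle_T$. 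Expanding $\dot e(t) = \sum_{j\geq 1}(\phi_j^*\dot e(t))\,e_j^*$ and invoking once again $\phi_1^* F e_1^* = 0$ to discard the $j=1$ contribution, one obtains
\[\lambda'' = 2\sum_{i=2}^n \frac{(\phi_1^* F e_i^*)(\phi_i^* F e_1^*)}{\lambda_1^*-\lambda_i^*}\,\langle \gamma\,\gamma_i\rangle_T\,.\]
The last step is the identity $\langle \gamma\,\gamma_i\rangle_T = \langle \gamma_i^2\rangle_T$: multiplying the defining ODE of $\gamma_i$ by $\gamma_i$, the cross term $\gamma_i\dot\gamma_i/(\lambda_1^*-\lambda_i^*)$ is a total derivative of $\gamma_i^2/[2(\lambda_1^*-\lambda_i^*)]$, whose period average vanishes, yielding $\langle\gamma\gamma_i\rangle_T = \langle\gamma_i^2\rangle_T$ and hence formula~\eqref{eq:secondFloquet}.

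The only genuinely delicate ingredient is the smooth $\epsilon$-dependence justifying the two formal differentiations: once this is granted, the rest of the proof is a direct computation driven by the spectral decomposition of the unperturbed matrix $G+\alpha^* F$ and the first-order optimality condition $\phi_1^* F e_1^* = 0$, combined with two uses of periodicity (to kill $t$-derivatives after averaging) and one integration by parts (to identify $\langle\gamma\gamma_i\rangle_T$ with $\langle\gamma_i^2\rangle_T$).
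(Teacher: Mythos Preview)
Your proof is correct and follows essentially the same route as the paper's: differentiate the Floquet eigenproblem in $\epsilon$, project the first- and second-order perturbation equations onto the dual eigenbasis $\{\phi_i^*\}$, average over a period to kill total time-derivatives, and conclude via the identity $\langle\gamma\gamma_i\rangle_T=\langle\gamma_i^2\rangle_T$ obtained by multiplying the relaxation ODE by $\gamma_i$. Your explicit remark on smooth $\epsilon$-dependence (simplicity of $\lambda_1^*$ and analytic perturbation of the monodromy) is a welcome addition that the paper leaves implicit.
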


The idea of computing directional derivatives has been used in a similar context in~\cite{M2} for optimizing the Perron eigenvalue in a continuous model for cell division. See also~\cite{Clairambault}
for a more general discussion on the comparison between Perron and Floquet eigenvalues. 
\if{
\vc{
\begin{proof}[Proof of Proposition~\ref{prop:per:firstderiv}]
First we derive a formula for the first derivative of the Perron eigenvalue:
By definition we have
\[(G+\al F)e_\alpha=\lambda_P(\alpha)e_\alpha\,.\]
Deriving with respect to $\al$ we get
\[
\frac {d\lambda_P}{d\alpha}(\alpha) e_\alpha+\lambda_P(\alpha)\frac {d e_\alpha}{d\alpha} =Fe_\alpha+(G+\al F)\frac {d e_\alpha}{d\alpha}\, .
\]
Testing against the left- eigenvector $\phi_\alpha$ we obtain
\[\frac {d\lambda_P}{d\alpha}(\alpha)    =  \phi_\alpha Fe_\alpha \, .\]
Second, we write the Floquet eigenvalue problem corresponding to the periodic control $\alpha = \alpha^* + \epsilon\gamma$:
\[\frac{\partial}{\partial t} e_\alpha(t) + \lb_F(\alpha^* + \epsilon\gamma ) e_\alpha(t) = (G+(\al^*+ \epsilon \gamma(t))F) e_\alpha(t)\, .\]
Deriving this ODE with respect to $\epsilon$, we get
\begin{dmath}\label{eq:firstderiv_periodic} 
\frac{\partial}{\partial t} \dfrac{\partial e_\alpha}{\partial\epsilon}(t) + \dfrac{d \lb_F(\alpha^* + \epsilon\gamma )}{d\epsilon} e_\alpha(t) + \lb_F(\alpha^* + \epsilon\gamma ) \dfrac{\partial e_\alpha}{\partial\epsilon}(t)  =  \gamma(t)F e_\alpha(t) + (G+(\al^*+ \epsilon \gamma(t))F)\dfrac{\partial e_\alpha}{\partial\epsilon}(t)  \, .
\end{dmath}
Testing this equation against $\phi_1^*$ and evaluating at $\epsilon = 0$, we obtain
\[ \frac{\partial}{\partial t} \left( \phi_1^* \left.\dfrac{\partial e_\alpha}{\partial\epsilon}\right|_{\epsilon = 0}(t) \right) + \left.\dfrac{d \lb_F(\alpha^* + \epsilon \gamma)}{d\epsilon}\right|_{\epsilon = 0} = \gamma(t) \phi_1^*F e_1^*\, .\]
After integration over one period, we get
\begin{dmath*} \left.\dfrac{d \lb_F(\alpha^* + \epsilon \gamma)}{d\epsilon}\right|_{\epsilon = 0} = \left(\frac1\theta\int_0^\theta \gamma(t)\,dt\right)\phi_1^*F e_1^*= \la \gamma \ra_\theta \dfrac{d\lambda_P}{d\alpha} (\al^*) = 0\, ,\end{dmath*}
which is the first order condition \eqref{eq:1st order}.

Next, we test~\eqref{eq:firstderiv_periodic} against another left-eigenvector $\phi_i^*$ and we evaluate at $\epsilon = 0$.
We obtain the following equation satisfied by $\gamma_i(t) = (\lambda_1^* - \lambda_i^*)\phi_i^* \dfrac{\partial e_\alpha}{\partial\epsilon}(t)(\phi_i^* Fe_1^*)^{-1}$:
\begin{equation} 
\frac1{\lambda_1^* - \lambda_i ^*} \frac \partial {\partial t}{\gamma_i}(t)  +  \gamma_i(t) =  \gamma(t)  \, .
\label{eq:gamma_i}
\end{equation}
We differentiate~\eqref{eq:firstderiv_periodic} with respect to $\epsilon$. This yields
\begin{dmath*}
\frac{\partial}{\partial t} \dfrac{\partial^2 e_\alpha}{\partial\epsilon^2}(t) + \dfrac{d^2 \lb_F(\alpha^* + \epsilon\gamma )}{d\epsilon^2} e_\alpha(t) + 2 \dfrac{d \lb_F(\alpha^* + \epsilon\gamma )}{d\epsilon} \dfrac{\partial  e_\alpha}{\partial\epsilon }(t)   +  \lb_F(\alpha^* + \epsilon\gamma ) \dfrac{\partial^2 e_\alpha}{\partial\epsilon^2}(t) 
\\ = 2 \gamma(t)F \dfrac{\partial  e_\alpha}{\partial\epsilon }(t) + (G+(\al^*+ \epsilon \gamma(t))F)\dfrac{\partial^2 e_\alpha}{\partial\epsilon^2}(t)  \, .
\end{dmath*}
Testing this equation against $\phi_1^*$ and evaluating at $\epsilon = 0$, we find
\begin{dmath}
\frac{\partial}{\partial t} \left( \phi_1^* \left.\dfrac{\partial^2 e_\alpha}{\partial\epsilon^2}\right|_{\epsilon = 0}(t) \right) +  \left.\dfrac{d^2 \lb_F(\alpha^* + \epsilon \gamma)}{d\epsilon^2}\right|_{\epsilon = 0}    = 2\gamma(t) \phi_1^* F   \left.\dfrac{\partial e_\alpha}{\partial\epsilon}\right|_{\epsilon = 0}(t)  \, . 
\label{eq:extremal Floquet}
\end{dmath}
We decompose the unknown $\frac{\partial e_\alpha}{\partial\epsilon}(t)$ along the basis $(e_1^*,e_2^*,e_3^*)$
\[  \frac{\partial e_\alpha}{\partial\epsilon} (t) = \sum_{i = 1}^3 \gamma_i(t)\frac{(\phi_i^* Fe_1^*)}{\lambda_1^* - \lambda_i^*}  e_i^*  \, .\]
In particular, we have
\[ \phi_1^* F    \dfrac{\partial e_\alpha}{\partial\epsilon} (t) = \sum_{i=2}^{3}\gamma_i(t)\frac{(\phi_i^* F e_1^*)}{\lambda_1^* - \lambda_i^*}(\phi_1^* F e_i^*)\, ,   \]
since $\phi_1^* Fe_1^* = 0$ by optimality.
To conclude, we integrate \eqref{eq:extremal Floquet} over one period,
\begin{equation*}
\left.\dfrac{d^2 \lb_F(\alpha^* + \epsilon \gamma)}{d\epsilon^2}\right|_{\epsilon = 0}  =
 2 \sum_{i=2}^{3}\langle\gamma\gamma_i\rangle_\theta \dfrac{(\phi_i^* F e_1^*)( \phi_1^* F e_i^*)}{\lambda_1^* - \lambda_i^*} \,.
\end{equation*}
We conclude thanks to the following identity derived from \eqref{eq:gamma_i}: $\la \gamma_i ^2\ra_\theta = \langle\gamma\gamma_i\rangle_\theta$.
\end{proof}
}
}\fi

Taking $\gamma\equiv1$ in Equation~\eqref{eq:secondFloquet}, we get the second derivative of the Perron eigenvalue at $\alpha^*$,
\begin{equation}\label{eq:secondPerron}\dfrac{d^2\lambda}{d\alpha^2}(\alpha^*)=2\sum_{i=2}^{3}\dfrac{(\phi_1^* F e_i^*)(\phi_i^* F e_1^*)}{\lambda_1^*-\lambda_i^*} \, ,\end{equation}
which is nonpositive since $\alpha^*$ is a maximum point. 
Therefore we are led to the following question: is it possible to construct counter-examples such that the sum 
\eqref{eq:secondFloquet} is positive for some periodic control $\gamma$, whereas the sum \eqref{eq:secondPerron} is nonpositive? This is clearly not possible in dimension $n = 2$ because the sum in \eqref{eq:secondFloquet} is reduced to a single nonpositive term by \eqref{eq:secondFloquet}. 




\subsection*{Legendre type condition for local optimality on short times}

Consider the control system
\begin{align}\label{eq:control_syst_alpha}
&\dot x(t)=(G+\alpha(t)F) x(t),\\
&x(0)=e_1^*,\nonumber\\
&\alpha(\cdot)\in L^\infty([0,T],[a,A]),\nonumber
\end{align}
where $e_1^*$ is the Perron eigenvalue corresponding to the parameter $\alpha^*\in(a,A)$ which maximizes the Perron eigenvalue on the set $\{G+\alpha F,\ \alpha\in[a,A]\}.$
A particular admissible control is given by $\alpha(\cdot)\equiv\alpha^*,$ and the associated trajectory is given by $x(t)=e^{\lambda^*t}\,e_1^*.$
We study the endpoint mapping
\[F_T:\left\{\begin{array}{rcl}
L^\infty([0,T],[a,A])&\to& K,\\
\alpha(\cdot)&\mapsto& x(T),
\end{array}\right.\]
in the neighborhood of $\alpha^*.$
Consider a variation $\alpha(\cdot)=\alpha^*+\epsilon\,\beta(\cdot)\in L^\infty([0,T],[a,A]).$
The corresponding trajectory $y$ is given by
\[\dot y=(G+\alpha^*F)y+\epsilon\,\beta Fy,\qquad y(0)=e_1^*,\]
and we deduce the asymptotic expansion of the endpoint mapping
\begin{align}
F_T(\alpha)&=x(T)+\epsilon\int_0^T\beta(t)e^{(G+\alpha^*F)(T-t)}Fx(t)\,dt
\nonumber%
\\
&\!\!\!\!\!\!\!\!\!\!\!\!\!\!\!\!\!\!\!+\epsilon^2\!\!\!\int_0^T\!\!\!\!\int_0^t\!\!\!\!\beta(t)\beta(s)e^{(G+\alpha^*F)(T-t)}Fe^{(G+\alpha^*F)(t-s)}Fx(s)\,ds\,dt\nonumber\\ 
& +O(\epsilon^3).
\label{eq:expansion}
\end{align}
Then the first variation of $F_T$ is given by
\[(D_{\alpha^*}F_T)\beta=\int_0^T\beta(t)e^{(G+\alpha^*F)(T-t)}Fx(t)\,dt.\]
The control $\alpha^*$ is said to be a critical point of $F_T$ if there exists a Lagrange multiplier $\psi\in\R^n$ such that
\[\psi(D_{\alpha^*}F_T)\beta=0,\quad\forall\beta,\]
i.e.,
\begin{equation}\label{first_order_condition}\psi e^{(G+\alpha^*F)(T-t)}Fx(t)=0,\quad\forall t\in[0,T].\end{equation}
Equivalently we can say that $\alpha^*$ is a critical point if and only if there exists a solution $p(t)$ to the adjoint equation
\[\dot p=-p(G+\alpha^* F)\]
such that
\begin{equation}\label{first_order_condition_PMP}\forall t\in[0,T],\quad p(t)Fx(t)=0.\end{equation}
The dual Perron eigenvector $\phi^*$ associated to $\alpha^*$ satisfies condition~\eqref{first_order_condition}
\[\phi^* e^{(G+\alpha^*F)(T-t)}Fx(t)=e^{\lambda^* T}\phi^*Fe_1^*=0.\]
In terms of our optimal control problem, that means that  the control $\alpha^*$ is critical for the control system~\eqref{eq:control_syst_alpha} when the objective we want to maximize is $\phi^*x(T).$
Condition~\eqref{first_order_condition_PMP}, which is equivalent to the Pontryagin maximum principle, is satisfied with $p(t)=e^{\lambda^*(T-t)}\phi^*.$

We go further to second order condition.
The asymptotic expansion~\eqref{eq:expansion} yields the expression for the second differential
\begin{dmath*}(D^2_{\alpha^*}F_T)(\beta,\beta)=
2\!\!\int_0^T\!\!\!\!\int_0^t\!\!\!\!\!\beta(t)\beta(s)e^{(G+\alpha^*F)(T-t)}Fe^{(G+\alpha^*F)(t-s)}Fx(s)\,ds\,dt.\end{dmath*}
Take the Lagrange multiplier $\phi^*$ and define the corresponding quadratic form
\begin{align}
Q(\beta)&:=\phi^*(D^2_{\alpha^*}F_T)(\beta,\beta)\nonumber\\
&\!\!\!\!\!\!\!\!\!\!\!\!\!\!\!=2\!\!\int_0^T\!\!\!\!\!\int_0^t\!\!\!\!\beta(t)\beta(s)\,\phi^*e^{(G+\alpha^*F)(T-t)}Fe^{(G+\alpha^*F)(t-s)}Fx(s)dsdt\nonumber\\
&\!\!\!\!\!\!\!\!\!\!\!\!\!\!\!=2e^{\lambda^*T}\!\!\!\int_0^T\!\!\!\!\!\int_0^t\!\!\!\!\beta(t)\beta(s)\,\phi^*Fe^{(G+\alpha^*F-\lambda^*I)(t-s)}Fe_1^*dsdt.\label{eq:quadratic_form}
\end{align}
Computing the leading terms in $Q$ when $T$ is small, we get a sufficient condition for the control $\alpha^*$ to be locally optimal for small times on the hyperplane
\[L^\infty_0:=\Bigl\{u\in L^\infty([0,T],[a,A]),\ \int_0^Tu(t)\,dt=0\Bigr\}.\]

\begin{proposition}\label{prop:Legendre}
If the condition
\begin{equation}\label{legendre_condition}\phi^*F(G+\alpha^*F-\lambda^*I)Fe_1^*>0.\end{equation}
is satisfied, then the quadratic form $Q{|_{L^\infty_0}}$ is $H^{-1}$ negative definite for short terms, i.e.
\[\exists\gamma>0,\ \exists\epsilon>0,\ \forall T\in(0,\epsilon),\quad Q|_{L^\infty_0}(\beta,\beta)\leq-\gamma\|\beta\|_{H^{-1}}^2.\]
\end{proposition}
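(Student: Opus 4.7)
The plan is to expand the matrix exponential appearing in the expression for $Q$ in powers of $(t-s)$ and isolate the leading contribution on the subspace $L^\infty_0$ as $T\to 0$. Setting $a_k := \phi_1^* F(G+\alpha^* F - \lambda_1^* I)^k F e_1^*$ and $I_k(\gamma) := \int_0^T\!\int_0^t \gamma(t)\gamma(s)(t-s)^k\, ds\, dt$, the formula for $Q$ stated in the excerpt becomes $Q(\gamma) = 2 e^{\lambda_1^* T} \sum_{k\geq 0} \frac{a_k}{k!} I_k(\gamma)$. The goal is to show that the two lowest-order terms already account for $Q$ up to an $O(T)$ correction relative to $\|\gamma\|_{H^{-1}}^2$.

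First I would handle the two leading terms by direct computation. The $k=0$ contribution is
$$I_0(\gamma) = \tfrac{1}{2}\left(\int_0^T \gamma(t)\, dt\right)^2,$$
which vanishes identically on $L^\infty_0$. For the $k=1$ contribution, introduce the primitive $B(t) := \int_0^t \gamma(s)\, ds$, which satisfies $B(0) = B(T) = 0$ precisely because $\gamma$ has mean zero. Two successive integrations by parts, one to eliminate the factor $(t-s)$ and one to eliminate $\gamma(t)$, both using the vanishing of $B$ at the endpoints, yield
$$I_1(\gamma) = -\int_0^T B(t)^2 \, dt = -\|B\|_{L^2(0,T)}^2.$$
Since the primitive map $\gamma \mapsto B$ defines an isomorphism between the mean-zero subspace of $H^{-1}(0,T)$ and the mean-zero subspace of $L^2(0,T)$, the norm $\|B\|_{L^2}$ is equivalent to $\|\gamma\|_{H^{-1}}$.

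Next I would control the tail $\sum_{k\geq 2} \frac{a_k}{k!} I_k(\gamma)$. Writing the matrix exponential in Duhamel form,
$$e^{(G+\alpha^* F - \lambda_1^* I)\tau} = I + \tau(G+\alpha^* F - \lambda_1^* I) + \tau^2 R(\tau),$$
with $\|R(\tau)\|$ bounded uniformly for $\tau \in [0,T]$ with $T$ bounded, the tail reduces to an integral of the form $2 e^{\lambda_1^* T}\int_0^T\!\int_0^t \gamma(t)\gamma(s)(t-s)^2 \phi_1^* F R(t-s) F e_1^*\, ds\, dt$. Integrating by parts twice in $s$ and once in $t$, exploiting again $B(0)=B(T)=0$, this rewrites as a bilinear form in $B$ with a kernel controlled by $T$ times a bounded function, so Cauchy--Schwarz yields the bound $O(T)\|B\|_{L^2}^2$.

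Combining these pieces, on $L^\infty_0$ we obtain $Q(\gamma) = -2 e^{\lambda_1^* T}(a_1 + O(T))\|B\|_{L^2}^2$. Under the Legendre hypothesis \eqref{legendre_condition}, $a_1 > 0$, so choosing $\epsilon > 0$ small enough that $a_1 + O(T) \geq a_1/2$ for all $T \in (0,\epsilon)$ gives the announced estimate, with $\delta$ proportional to $a_1$ and to the constant in the equivalence $\|B\|_{L^2}^2 \gtrsim \|\gamma\|_{H^{-1}}^2$. I expect the main technical obstacle to lie in making the tail bound genuinely $O(T)$ rather than merely $O(1)$: one must arrange the integrations by parts so that each differentiation of $(t-s)^2$ produces either a factor of $T$ or a vanishing boundary term, and one must ensure these manipulations survive when the constant kernel $a_k$ is replaced by the matrix-valued kernel $\phi_1^* F R(t-s) F e_1^*$.
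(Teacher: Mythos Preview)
Your approach is essentially the same as the paper's: expand the matrix exponential, observe that the zeroth-order term vanishes on $L^\infty_0$, identify the first-order term as $-\|B\|_{L^2}^2$ via two integrations by parts using $B(0)=B(T)=0$, and recognize this as the $H^{-1}$ norm squared of $\gamma$. The paper's proof is in fact a terser sketch that does not explicitly bound the tail, so your treatment of the remainder via the Duhamel-type expansion is a welcome addition; one small simplification is that the paper takes $\|\gamma\|_{H^{-1}}^2 := \int_0^T B^2$ as the definition rather than invoking a norm equivalence.
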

We skip the proof of this result.
\if{
\begin{proof}
The leading term (when $T\to0$) in~\eqref{eq:quadratic_form} vanishes on $L^\infty_0$
\[2\int_0^T\int_0^t\beta(t)\beta(s)\,\phi^*F^2e_1^*\,ds\,dt
=\biggl(\int_0^T\beta(t)\,dt\biggr)^2\phi^*F^2e_1^*.\]
The following dominating term is
\[2e^{\lambda^*T}\int_0^T\int_0^t\beta(t)\beta(s)(t-s)\,\phi^*F(G+\alpha^*F-\lambda^*I)Fe_1^*\,ds\,dt\]
and two integrations by parts give
\begin{dmath*}\int_0^T\beta(t)\int_0^t\beta(s)(t-s)\,ds\,dt=\int_0^T\beta(t)\,dt\int_0^T B(t)\,dt-\int_0^TB^2(t)\,dt\end{dmath*}
where $B(t)=\int_0^t\beta(s)\,ds.$
We get the result since the first term on the r.h.s. vanishes on $L^\infty_0$ and the second term is nothing but the squared $H^{-1}$ norm of $\beta.$ 
\end{proof}}\fi


Condition~\eqref{legendre_condition} is the so-called {\it generalized Legendre condition} of our problem.
The generalized Legendre condition appears in the study of optimality for totally singular extremals, i.e. when the second derivative of the Hamiltonian is identically zero along the trajectory.
A typical example is provided by the single-input affine control systems, namely, $\dot x(t)=f_0(x(t))+\alpha(t)f_1(x(t)),$ where $\alpha(t)\in\R,$ and $f_0,f_1$ are smooth vector fields.
In this case the generalized Legendre condition writes
\[\langle p(\cdot)[f_1,[f_0,f_1]]x(\cdot)\rangle>0\]
where $[\cdot,\cdot]$ is the Lie bracket of vector fields.
Our control system~\eqref{eq:control_syst_alpha} belongs to this class of problems, and straightforward computations show that for $x(t)=e^{\lambda^*t}e_1^*$ and $p(t)=e^{-\lambda^*t}\phi_1^*,$ we have $p(\cdot)[F,[G,F]]x(\cdot)=\phi^*F(G+\alpha^*F-\lambda^*I)Fe_1^*.$
The generalized Legendre condition ensures that the quadratic form $Q|_{\text{Ker}D_{\alpha^*}F_T}$ is definite negative.
Then it allows to deduce that the trajectory $x(\cdot)$ is locally optimal for short final times $T$ in the $C^0$ topology (we refer to~\cite{AgrachevSachkov}
 and~\cite{BonnardCaillauTrelat}).

Here we proved the negativity of $Q$ on $L^\infty_0$ instead of $\text{Ker}D_{\alpha^*}F_T$ in order to make clearer the link with the computation of the second derivative of Floquet eigenvalue.
It follows from the previous section that the second derivative of $\lambda_F$ is positive for periodic controls $\cos(\omega t)$ when $\omega$ is large if the condition
\begin{equation}\label{floquet_condition}\sum_{i=2}^n(\lambda_1^*-\lambda_i^*)(\phi_1^*Fe_i^*)(\phi_i^*Fe_1^*)>0.\end{equation}
Considering $T$ small and $\omega=k\frac{2\pi}{T}$ with $k\geq1,$ we have that $\cos(\omega t)\in L^\infty_0$ and $\omega\to+\infty$ when $k\to+\infty.$
The following proposition makes concordant conditions~\eqref{legendre_condition} and~\eqref{floquet_condition}.
\begin{proposition}\label{prop:equivalence_Legendre_Floquet}
We have
\[\phi^*F(G+\alpha^*F-\lambda^*I)Fe_1^*=-\sum_{i=2}^n(\lambda_1^*-\lambda_i^*)(\phi_1^*Fe_i^*)(\phi_i^*Fe_1^*).\]
\end{proposition}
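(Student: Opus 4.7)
\textbf{Proof proposal for Proposition~\ref{prop:equivalence_Legendre_Floquet}.} The plan is to carry out a spectral decomposition of the middle operator $G + \alpha^* F - \lambda_1^* I$ in the biorthogonal basis $(e_i^*), (\phi_j^*)$. Since we have assumed that $G + \alpha^* F$ is diagonalizable, we may normalize the left and right eigenvectors so that $\phi_i^* e_j^* = \delta_{ij}$, and then every vector $v\in\R^n$ admits the expansion $v = \sum_{i=1}^n (\phi_i^* v)\, e_i^*$. The operator $G + \alpha^* F - \lambda_1^* I$ acts diagonally in this basis, sending $e_i^*$ to $(\lambda_i^* - \lambda_1^*)\, e_i^*$.

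Applying this to $v = F e_1^*$, I first write $F e_1^* = \sum_{i=1}^n (\phi_i^* F e_1^*)\, e_i^*$, and then
\[
(G + \alpha^* F - \lambda_1^* I) F e_1^* \;=\; \sum_{i=1}^n (\lambda_i^* - \lambda_1^*)(\phi_i^* F e_1^*)\, e_i^* \;=\; -\sum_{i=2}^n (\lambda_1^* - \lambda_i^*)(\phi_i^* F e_1^*)\, e_i^*,
\]
where the $i=1$ term drops out trivially since $\lambda_1^* - \lambda_1^* = 0$. (Alternatively, one could invoke the first-order optimality condition $\phi_1^* F e_1^* = 0$; either is enough to kill the first term.) Testing the remaining sum against $\phi_1^* F$ and using $\phi_1^*(\cdot)\, e_i^*$ term-by-term yields
\[
\phi_1^* F (G + \alpha^* F - \lambda_1^* I) F e_1^* \;=\; -\sum_{i=2}^n (\lambda_1^* - \lambda_i^*)(\phi_1^* F e_i^*)(\phi_i^* F e_1^*),
\]
which is the claimed identity.

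There is no real obstacle here; the proof is essentially a bookkeeping exercise in the eigenbasis of $G + \alpha^* F$. The only mild subtlety is ensuring that the biorthogonality $\phi_i^* e_j^* = \delta_{ij}$ holds under our normalization conventions, and that the diagonalizability hypothesis made earlier in the section is indeed being used; apart from these remarks, the identity follows from one spectral expansion and one linear test.
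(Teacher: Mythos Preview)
Your proof is correct and follows essentially the same route as the paper: both decompose $Fe_1^*$ in the eigenbasis $(e_i^*)$, use that $G+\alpha^*F$ acts as multiplication by $\lambda_i^*$ on $e_i^*$, and then test against $\phi_1^*F$. The only cosmetic difference is that the paper computes $\phi_1^*F(G+\alpha^*F)Fe_1^*$ and $\lambda_1^*\phi_1^*F^2e_1^*$ separately before subtracting, whereas you apply $G+\alpha^*F-\lambda_1^*I$ in one stroke.
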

This relation is instructive since it emphazises the relation between a condition for optimality for small times and a condition for optimality with high frequences.
\if{
\begin{proof}
We decompose $Fe_1^*$ on the basis $(e_i^*)_{1\leq i\leq n}$
\[Fe_1^*=\sum_{i=1}^n(\phi_i^*Fe_1^*)\,e_i^*.\]
Then we get
\[\lambda^*\phi_1^*F^2e_1^*=\lambda^*\sum_{i=1}^n(\phi_i^*Fe_1^*)(\phi_1^*Fe_i^*)\]
and
\[\phi_1^*F(G+\alpha^*F)Fe_1^*=\sum_{i=1}^n\lambda_i^*(\phi_i^*Fe_1^*)(\phi_1^*Fe_i^*)\]
\end{proof}}\fi
}\fi
\subsection{Lack of controllability/coercivity and the ergodic set}
Classical arguments for proving ergodicity results such as Theorem \ref{thm:optimal growth} or Corollary \ref{cor:ergodicity} rely on short time dynamics of the system. This is the case for instance of the ergodicity result in Capuzzo-Dolcetta and Lions \cite{CDL},
and of the Weak KAM Theorem of Fathi~\cite{Fathi}. In the former the authors assume a uniform controllability condition,  
\begin{equation*} (\exists r>0)\quad  (\forall y \in  \S )\quad  B(0,r) \subset \overline{\mathrm{co}}\{ b(y,m)\; |\; m \in \M \}\,,  
\end{equation*}
The latter relies on a regularizing property of the Lax-Oleinik semi-group which holds true for Tonelli Lagrangians. Both cases imply that the Hamiltonian is coercive, {\em i.e.} $\lim_{|p|\to +\infty} H(p,y) = +\infty$, a property which is not satisfied in our case. 
One noticeable exception can be found in \cite[Section VII.1.2]{Bardi-Capuzzo}, where the controllability condition is replaced by a dissipativity condition which is somehow similar to our uniform contraction estimate in Lemma \ref{lem:contraction}. 

The lack of controllability is clear from Lemma \ref{lem:stable cone}, where some strict subsets of the simplex are \vc{positively} invariant by all the flows. In a couple of papers, Arisawa made clear the equivalence between ergodicity (in the sense of Corollary \ref{cor:ergodicity}) and the existence of a so-called ergodic set when controllability  is lacking. The ergodic set satisfies the following properties: it is non empty, closed and \vc{positively} invariant by the flows; it is attractant; it is controllable. We refer to \cite{Arisawa1,Arisawa2} for the precise meaning of this statement, and to Section \ref{sec:example} for illustrations of ergodic sets in low-dimensional examples. 

\section{Illustration in dimension $2$}
We first illustrate our results on a simple two-dimensional
example.
\begin{figure}[t!]
\begin{center}
\includegraphics[width = 0.8\linewidth]{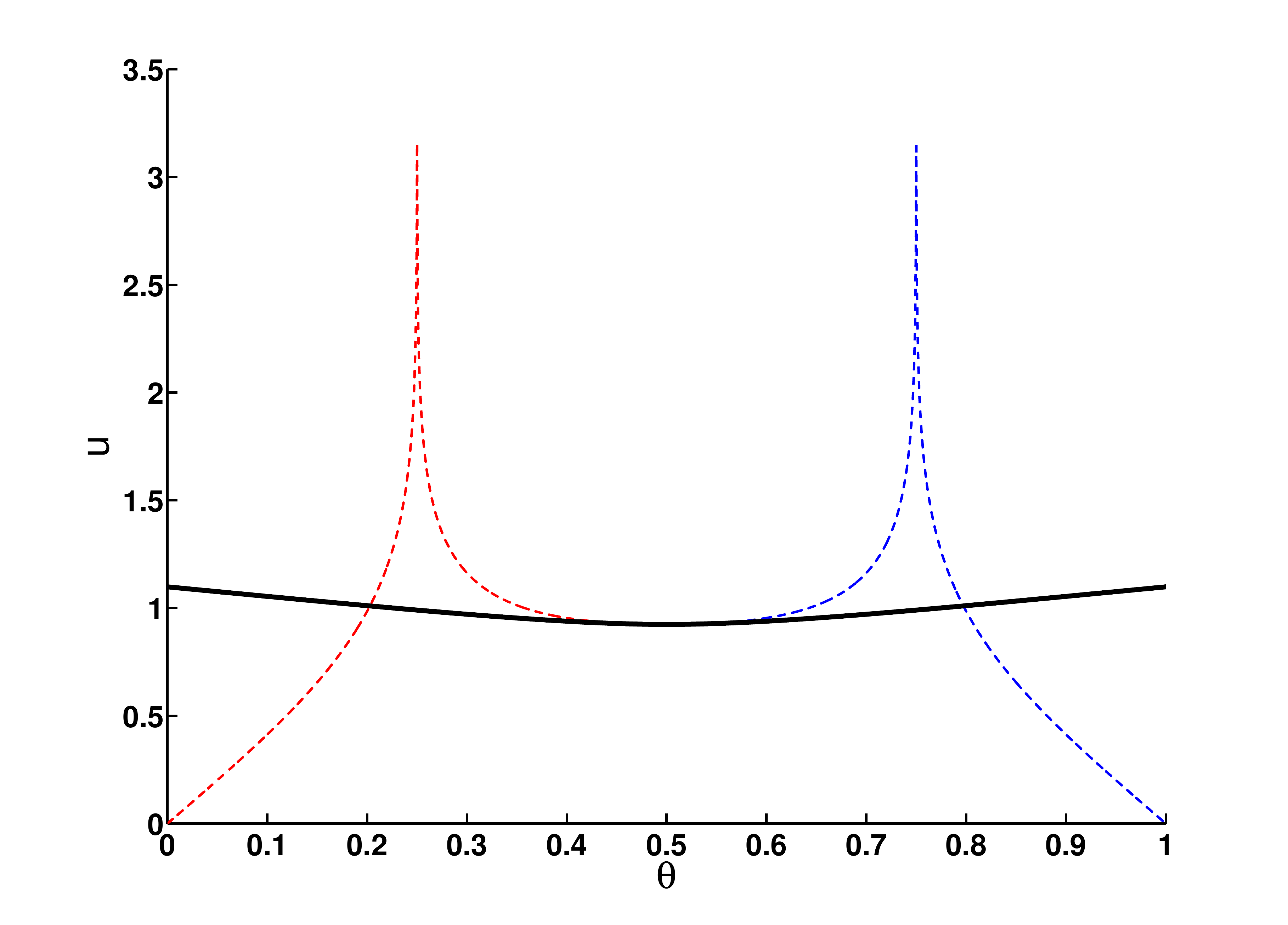}
\caption{Solution of the Hamilton-Jacobi equation \eqref{eq:ergodic HJ} for the critical value $\lambda = \frac12$. The two branches (dotted lines, red and blue) correspond to the two possible choices $\alpha = a$, resp. $\alpha = 1-a$. The bold line corresponds to the only possible combination of the two branches which gives a 
bounded viscosity solution.}
\end{center}
\end{figure}
Let $a\in (0,\frac12)$. We consider the one-parameter family  of matrices 
\[ \mathcal M = \left\{ \begin{pmatrix} 0 & 1 - \alpha \\ \alpha& 0 \end{pmatrix}\, , \quad \alpha\in [a,1 - a] \right\}\,.\]
The dominant eigenvalue is $\sqrt{\alpha(1 - \alpha)}$, with a maximum attained at $\alpha = \frac12$. 
We identify  $[0,1]$ and $\S$ under the parameterization
\[ \S = \left\{ \begin{pmatrix} 1 - \theta \\ \theta \end{pmatrix}\, , \quad \theta\in [0,1] \right\}\, . \]
We have 
$\l(\theta,\alpha) = \alpha (1 - \theta) + (1 - \alpha) \theta$, and 
$b(\theta,\alpha) = \alpha (1-\theta)^2 -  (1-\alpha) \theta^2$.
We look for a solution $(\lambda,\mathring u)$ to the Hamilton-Jacobi equation \eqref{eq:ergodic HJ}. 
For computing $\mathring u$ we have to combine two branches, corresponding to the choice  $\alpha = a$ or $\alpha = 1-a$. We realize that the first branch has a vertical asymptote at $\theta = z_a$ since $b(\cdot, a)$ vanishes at this point, whereas the second branch has a vertical asymptote at $\theta = z_{1 -a}$. Therefore there is only one possible combination which gives a bounded viscosity solution on $\S$. 

In this example the ergodic set is the segment $[z_a, z_{1-a}]$.

\section{Application to the optimization of growth-fragmentation processes in dimension 3}
\label{sec:example}

We consider a toy model for a stage-structured linear polymerization-fragmentation process. It is inspired from a nonlinear discrete polymerization-fragmentation introduced in \cite{Masel} for the dynamics of prion proliferation. We do not take into account nonlinear saturation effects, and we further reduce the size of the system to $n=3$. Polymers can have three states relative to their lengths: small (monomers), medium (oligomers), large (polymers). We denote by $x_i$, $i = 1,2,3$ the density of polymers in each compartment.  Transition rates due to growth in size of polymers from smaller to larger compartments (polymerization) are denoted by $\tau_i$, $i = 1,2$. Transition rates due to fragmentation from larger to smaller compartments are denoted by $\beta_i$, $i = 2,3$. The corresponding matrices are 
\beq\label{eq:example}\!\!G = \begin{pmatrix}
-\tau_{1}& 0 & 0 \\
\tau_{1} &-\tau_{2}& 0 \\
0 & \tau_{2} & 0 
\end{pmatrix} \,\text{and}\,
F = \begin{pmatrix}
0 & 2\beta_2 & \beta_3 \\
0 & -\beta_2 & \beta_3 \\
0 & 0 & -\beta_3
\end{pmatrix}\,.\eeq
Denoting by $q = (1\; 2 \; 3)^T$ the vector of relative sizes of polymers, we have the following properties: $\un^T G = 0$ (conservation of the number of polymers by growth) and $q^T F = 0$ (conservation of the total size of polymers by fragmentation). 

Optimal control issues come up in the development of efficient diagnosis tools for early detection of prion diseases from blood samples.  
The protocol PMCA (Protein Misfolding Cyclic Amplification) has been introduced by Soto  and co-authors \cite{Soto,Castilla} as very powerful method to achieve this goal. It aims at quickly generating {\em in vitro} detectable quantities of PrPsc being given minute quantities of it.  
PMCA consists in successive switching between incubation phases (where aggregates are expected to grow following a seeding-nucleation scenario alimented by purified PrPc) and sonication phases (where breaking of polymers is expected to increase the number of nucleation sites). This clear distinction between two phases with a control parameter which is the intensity of sonication makes the framework of \eqref{eq:ODE} very weel adapted to model PMCA.

The minimal model for PMCA goes as follows: introduce $\alpha:[0,t]\to [a,A]$ the intensity of sonication ({\em i.e.} fragmentation). The  goal is to maximize the total size of polymers $\la q, x_\alpha(t)\ra$ following the linear growth-fragmentation process:
\begin{equation} \begin{cases}
\dot x_\alpha(s) = (G + \alpha(s)F) x_\alpha(s),\\
x_\alpha(0) = x\,.
\end{cases} 
\label{eq:ODE PMCA}
\end{equation}
A generalization of this model, which includes an incidence of the sonication on the growth process, is investigated in~\cite{CoronGabrielShang}.
For problem~\eqref{eq:ODE PMCA}, corollary \ref{cor:ergodicity} implies that $\la q, x_\alpha(t)\ra$ has an exponential growth with exponent $\lambda(\M)$. When $\tau_1 = \tau_2$ it is clearly better to sonicate as much as possible ($\alpha(s)\equiv A$) because smaller monomers are equally efficient at growing in size than intermediate oligomers, but they are more numerous for a given size. However there are some biological evidence that polymerization rate is size-dependent: polymerization of intermediate aggregates have been postulated to be the most efficient \cite{Silveira} (see also \cite{CL1} for a continuous PDE model and a discussion of this phenomenon). Mathematically speaking we have a precise description of the variations of  $\lambda(\alpha)$ as stated in  the following Proposition.
\begin{proposition}\label{prop:n=3}
The Perron eigenvalue $\lambda(\alpha)$ of $G + \alpha F$ reaches a maximum value for some $\alpha^* \in (0,+\infty)$ if and only if $\tau_2>2\tau_1$. 
Furthermore we face  the following alternative:
\begin{itemize}
\item either $\tau_2\leq 2\tau_1$ and $\lb(\al)$ increases from $0$ to $\tau_1,$
\item or $\tau_2>2\tau_1$ and $\lb(\al)$ increases from $0$ to $\lb(\al^*)$ and then decreases from $\lb(\al^*)$ to $\tau_1$.
\end{itemize}
\end{proposition}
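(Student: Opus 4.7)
The plan is to read off the behavior of $\lambda(\alpha)$ from the characteristic polynomial of $G + \alpha F$, viewed jointly in $\lambda$ and $\alpha$. I would first compute
\[ Q(\lambda, \alpha) := -\det(G + \alpha F - \lambda I) = \lambda(\lambda+\tau_1)(\lambda+\tau_2) + \alpha\, A(\lambda) + \alpha^2 \beta_2 \beta_3 (\lambda - \tau_1), \]
with $A(\lambda) = (\beta_2+\beta_3)\lambda^2 + \tau_1(\beta_3-\beta_2)\lambda - \tau_1\tau_2\beta_3$. Using that $\det(G+\alpha F) = \tau_1\alpha\beta_3(\alpha\beta_2 + \tau_2) > 0$ and $\mathrm{tr}(G+\alpha F) = -\tau_1 - \tau_2 - \alpha(\beta_2+\beta_3) < 0$ for $\alpha > 0$, so that exactly one eigenvalue is real positive, I would identify $\lambda(\alpha)$ as the unique positive root of $Q(\cdot, \alpha)$. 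The boundary values $\lambda(0) = 0$ and $\lim_{\alpha\to+\infty}\lambda(\alpha) = \tau_1$ follow respectively from $Q(\lambda, 0) = \lambda(\lambda+\tau_1)(\lambda+\tau_2)$ and from dividing $Q$ by $\alpha^2$ and letting $\alpha \to +\infty$, which leaves $\beta_2\beta_3(\lambda - \tau_1) = 0$.

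The pivotal computation is the evaluation at $\lambda = \tau_1$:
\[ Q(\tau_1, \alpha) = 2\tau_1^2(\tau_1+\tau_2) + \alpha\tau_1\beta_3(2\tau_1-\tau_2). \]
Since $\lambda(\alpha)$ is the largest real root of $Q(\cdot,\alpha)$ and the remaining two eigenvalues have negative real parts, the sign of $Q(\tau_1,\alpha)$ coincides with the sign of $\tau_1 - \lambda(\alpha)$. In the regime $\tau_2 \leq 2\tau_1$, $Q(\tau_1,\alpha) > 0$ for every $\alpha \geq 0$, hence $\lambda(\alpha) < \tau_1$ throughout. In the regime $\tau_2 > 2\tau_1$, the affine function $\alpha \mapsto Q(\tau_1, \alpha)$ changes sign at a unique $\alpha_0 := 2\tau_1(\tau_1+\tau_2)/[\beta_3(\tau_2-2\tau_1)]$, so that $\lambda(\alpha_0) = \tau_1$ and $\lambda(\alpha)$ takes values strictly greater than $\tau_1$ on an interval beyond $\alpha_0$ before returning to $\tau_1$ as $\alpha \to +\infty$; this already forces the existence of at least one $\alpha^* \in (\alpha_0, +\infty)$ at which $\lambda$ attains a maximum $\lambda^* := \lambda(\alpha^*) > \tau_1$.

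For the monotonicity structure I would exploit the fact that, for each fixed $\lambda > 0$, $Q(\lambda, \cdot)$ is a quadratic in $\alpha$ whose positive roots enumerate the $\alpha$'s realizing $\lambda$ as the Perron eigenvalue. For $\lambda \in (0,\tau_1)$ the product of its roots $\lambda(\lambda+\tau_1)(\lambda+\tau_2)/[\beta_2\beta_3(\lambda-\tau_1)]$ is negative, so there is exactly one positive $\alpha(\lambda)$; simplicity of this root and the implicit function theorem give a smooth bijection from $(0,\tau_1)$ onto its image in $\alpha$, yielding strict monotonic increase on $(0,+\infty)$ in the case $\tau_2 \leq 2\tau_1$ and on $(0,\alpha_0)$ in the case $\tau_2 > 2\tau_1$. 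For $\lambda > \tau_1$ in the latter case, the leading coefficient and constant term of the quadratic are both positive while $A(\lambda) < 0$ near $\tau_1$, so positive roots exist only for $\lambda \in (\tau_1, \lambda^*]$, with exactly two in the interior that coalesce into a double root at $\alpha^*$ when the discriminant vanishes at $\lambda^*$. Continuity of the two branches, one emanating from $\alpha_0$ and the other from $+\infty$ as $\lambda \to \tau_1^+$, forces $\lambda(\alpha)$ to be strictly increasing on $(\alpha_0, \alpha^*)$ and strictly decreasing on $(\alpha^*, +\infty)$. The main subtlety, which rules out any additional oscillation and in particular the existence of a second local extremum, is precisely the observation that $Q(\lambda, \cdot)$ has at most two positive roots, so no horizontal line can meet the Perron curve more than twice.
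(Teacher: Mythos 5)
The paper itself does not contain a proof of Proposition~\ref{prop:n=3}: it defers the details to~\cite{calvezgabriel}, so there is no internal argument to compare yours against; judged on its own merits, your proof is essentially correct. The characteristic polynomial $Q(\lambda,\alpha)=\lambda(\lambda+\tau_1)(\lambda+\tau_2)+\alpha A(\lambda)+\alpha^2\beta_2\beta_3(\lambda-\tau_1)$ with $A(\lambda)=(\beta_2+\beta_3)\lambda^2+\tau_1(\beta_3-\beta_2)\lambda-\tau_1\tau_2\beta_3$ checks out, as do the determinant, the trace, the identification of $\lambda(\alpha)$ with the unique positive real root (the other two eigenvalues indeed have negative real parts, so the sign of $Q(\tau_1,\alpha)$ is that of $\tau_1-\lambda(\alpha)$), and the pivotal evaluation $Q(\tau_1,\alpha)=2\tau_1^2(\tau_1+\tau_2)+\alpha\tau_1\beta_3(2\tau_1-\tau_2)$, whose sign change at $\alpha_0$ occurs precisely when $\tau_2>2\tau_1$ and drives the dichotomy.

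Two steps deserve tightening. First, the conclusion $\lim_{\alpha\to+\infty}\lambda(\alpha)=\tau_1$ obtained by dividing by $\alpha^2$ presupposes that $\lambda(\alpha)$ remains bounded; this is easy within your own framework (for $\lambda\ge\max(\tau_1,\tau_2)$ one has $A(\lambda)\ge 0$ and $\lambda-\tau_1\ge 0$, hence $Q(\lambda,\alpha)\ge\lambda(\lambda+\tau_1)(\lambda+\tau_2)>0$ for all $\alpha\ge 0$, so $\lambda(\alpha)<\max(\tau_1,\tau_2)$), but it must be said. Second, the description of the regime $\lambda>\tau_1$ through ``two branches emanating from $\alpha_0$ and from $+\infty$'' is loose as stated; what actually closes the argument is exactly the counting fact you give in your last sentence: for fixed $\lambda>\tau_1$ the quadratic $Q(\lambda,\cdot)$ has at most two roots, so each level above $\tau_1$ is attained at most twice, and combined with continuity of $\lambda(\cdot)$, the strict inequality $\lambda>\tau_1$ on $(\alpha_0,+\infty)$, and the convergence of $\lambda$ to $\tau_1$ at both ends of that interval, any failure of ``increase then decrease'' (two maximizers, or a non-monotone stretch on either side of the maximizer) would force some level to be attained at least four times, or infinitely often if $\lambda$ were locally constant. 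Phrased as this level-set count, the unimodality on $(\alpha_0,+\infty)$ is rigorous, and the same injectivity-plus-continuity argument you already use for $\lambda\in(0,\tau_1)$ gives the strict increase on $(0,\alpha_0)$ (and on all of $(0,+\infty)$ when $\tau_2\le 2\tau_1$). With these two patches your argument is complete.
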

We  refer to~\cite{calvezgabriel}
for the details of the proof of Proposition~\ref{prop:n=3}.

\begin{figure}
\begin{center}
\includegraphics[width = 0.5\linewidth]{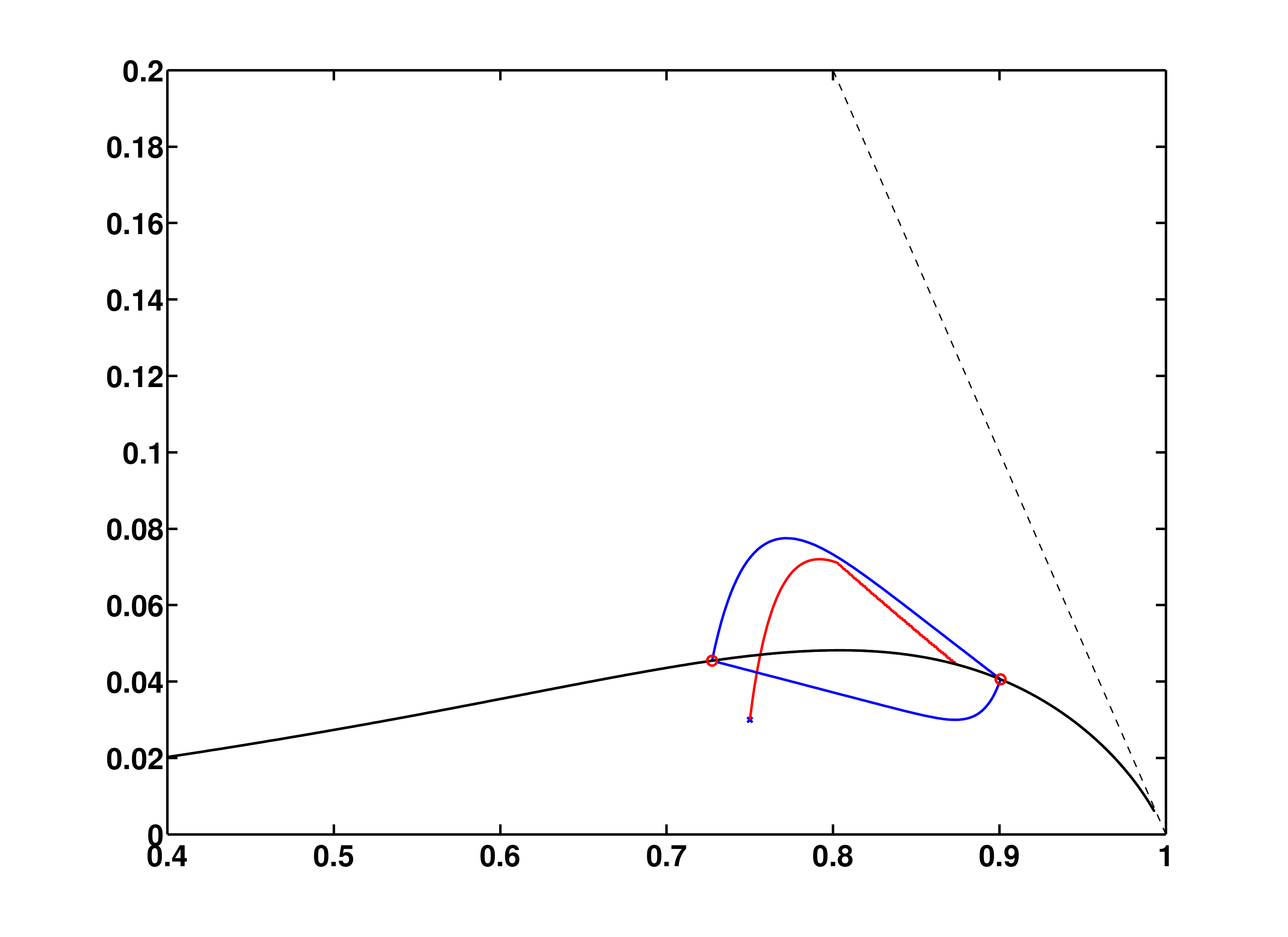}\includegraphics[width = 0.5\linewidth]{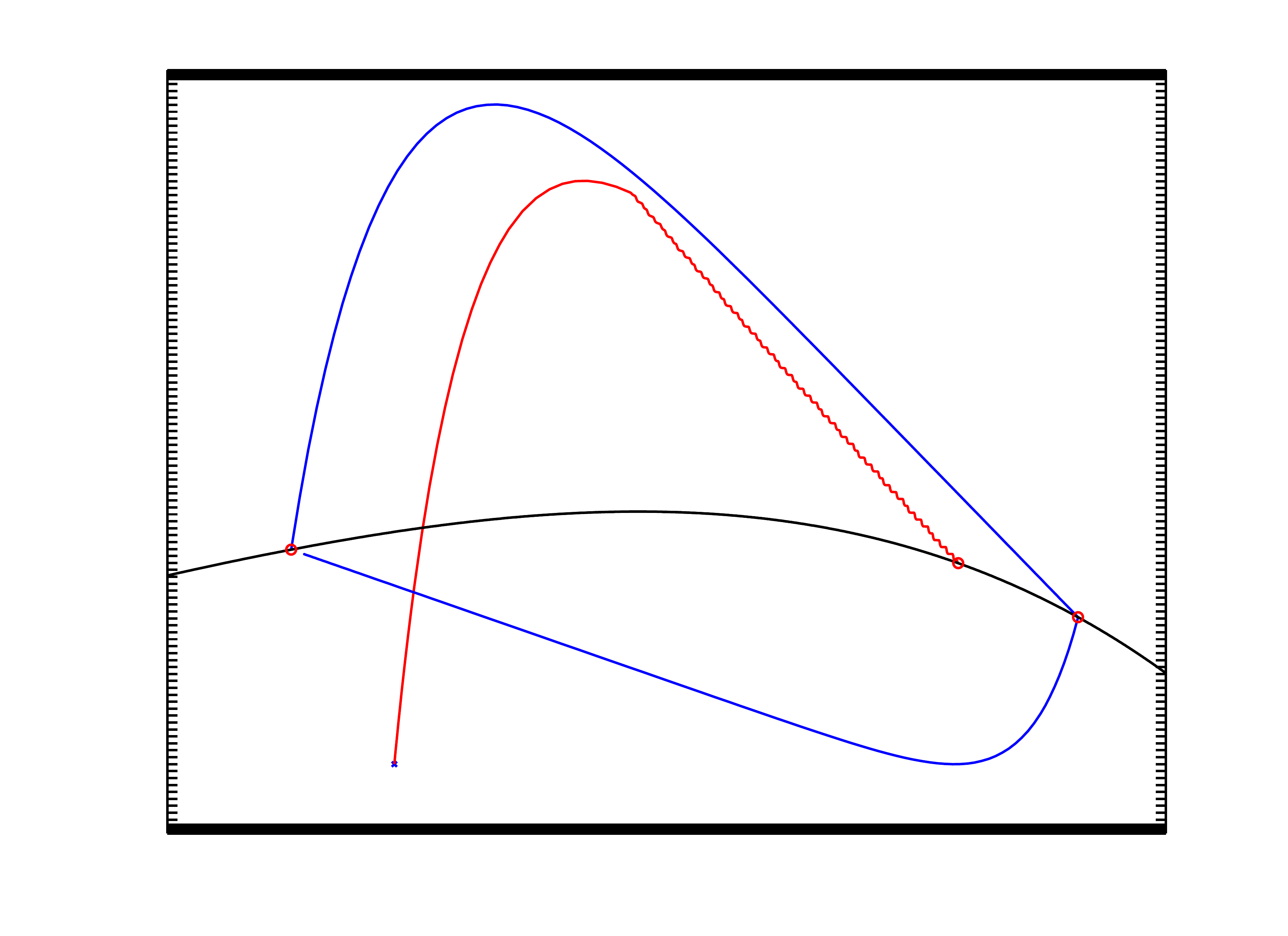}
\caption{Illustration of convergence of optimal trajectories towards the optimal Perron eigenvector for the three-dimensional growth-fragmentation process \eqref{eq:example} with typical parameters. We have plotted the curve of Perron eigenvectors (black  line), the boundary of the ergodic set (blue line), and an arbitrary optimal trajectory (red line). The optimal trajectory enters the ergodic set and then converges towards a limit cycle. Right picture is a zoom of the left one. The numerical grid is plotted on the axes: the space step is $\Delta x = 5\text{\sc \footnotesize E}{-4}$ to fall much below the width of the ergodic set. Parameters are $\tau_1 = 2\text{\sc \footnotesize E}-2, \tau_2 = 1, \beta = 4\text{\sc \footnotesize E}-2$, and $a = 2, A = 8$.} \label{fig:PMCA}
\end{center}
\end{figure}

Qualitative analysis of the optimal control and associated optimal trajectories rely on the description of relevant sets in the simplex. The ergodic set introduced by Arisawa~\cite{Arisawa1,Arisawa2}  can be characterized as the set enclosed by two remarkable trajectories: each starting from one  of the two extremal Perron eigenvectors (resp. $e_1(a)$ and $e_1(A)$) and evolving with constant control (resp. $A$ and $a$).  This set is of particular interest since it attracts all trajectories, not necessarily optimal ones. However it does not give any insight about the fate of optimal trajectories inside the ergodic set. 

So far we have only access to local second-order conditions \eqref{eq:secondPerron} to test the optimality of the best constant control. Numerical tests suggest that we always have  $\lambda(\M) = \max_m  \lambda(m)$ in the case of \eqref{eq:example}, that is to say the optimal trajectory converges towards the optimal Perron eigenvector in the simplex. This is confirmed by finite-volume numerical simulations of the ergodic Hamilton-Jacobi equation \eqref{eq:ergodic HJ:intro}, see Figure \ref{fig:PMCA}. We computed the optimal control $\alpha^*$ as a function of the position in the simplex. We observed a clear separation between two connected regions of the simplex (result not shown), corresponding to the extremal choices $\alpha = a$ or $\alpha = A$. This gives an optimal vector field which drives optimal trajectories. Outcomes of the numerical simulations  are consistent with the presumable stability of the best constant control against periodic perturbations.

\begin{figure}
\begin{center}
\includegraphics[width = 0.5\linewidth]{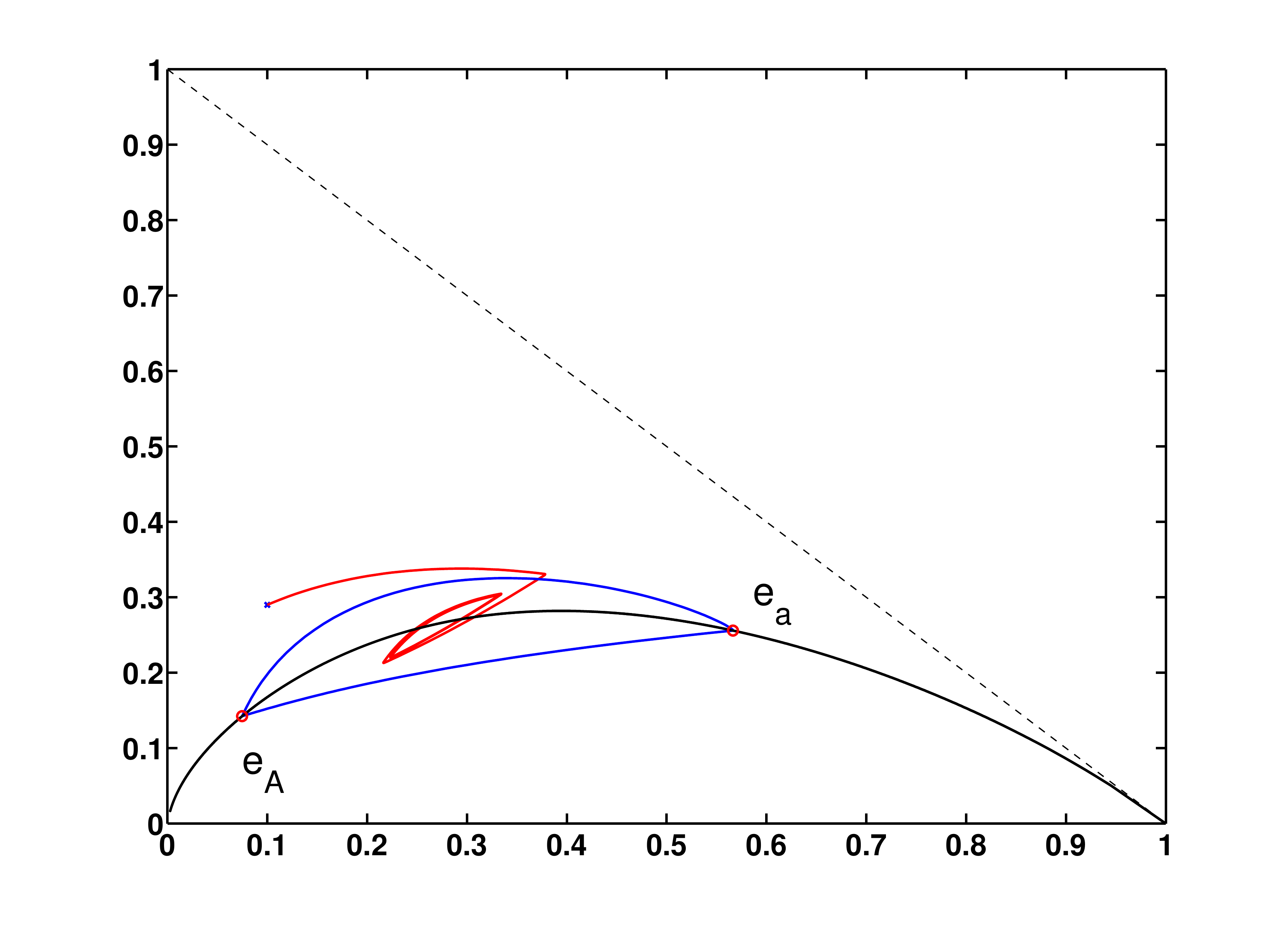}\includegraphics[width = 0.5\linewidth]{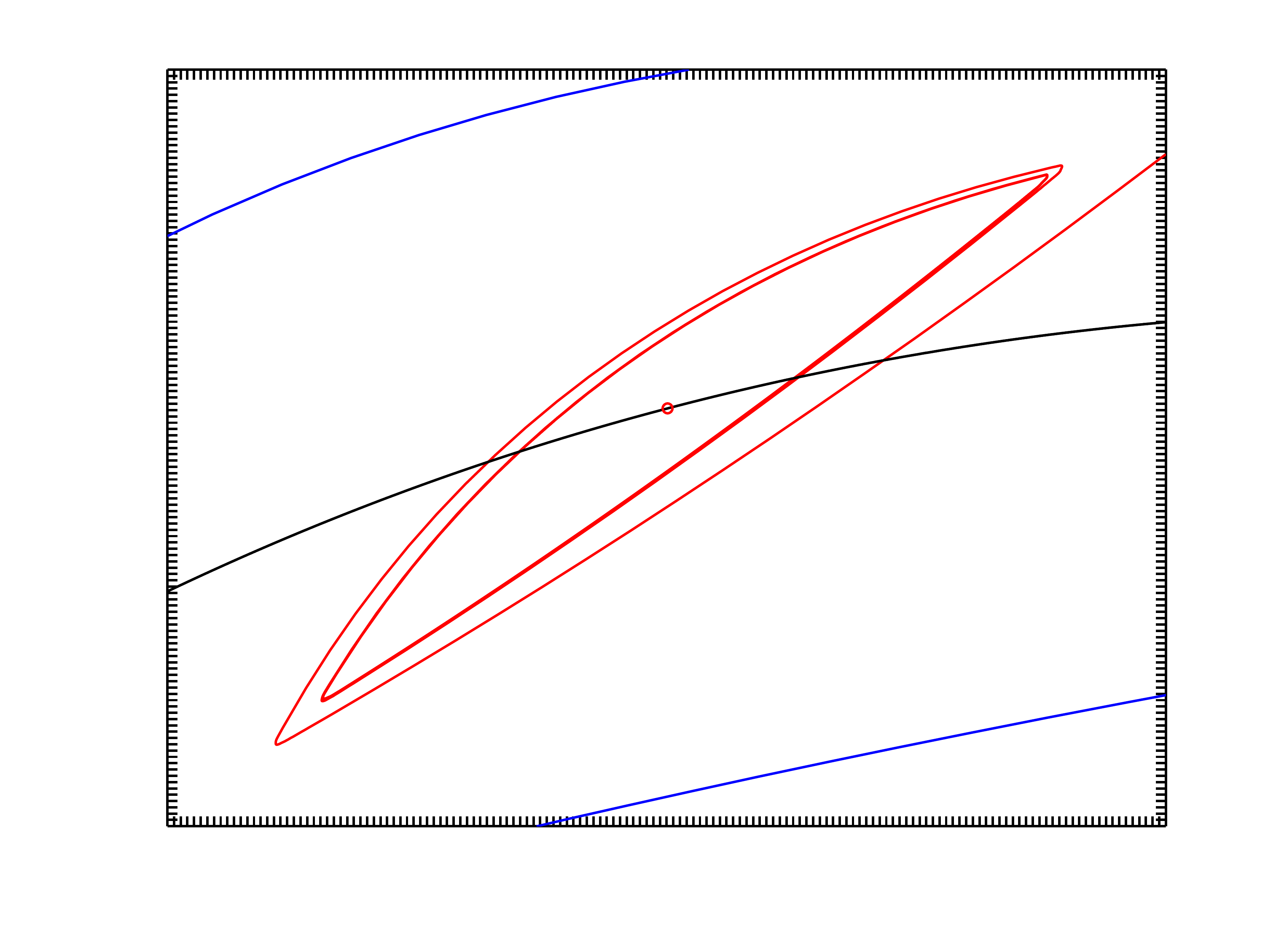}
\caption{Illustration of the optimal limit cycle for the numerical example \eqref{eq:example:new}. We have plotted the curve of Perron eigenvectors (black  line), the boundary of the ergodic set (blue line), and an arbitrary optimal trajectory (red line). The optimal trajectory enters the ergodic set and then converges towards a limit cycle. Right picture is a zoom of the left one. The numerical grid is plotted on the axes: the space step is $\Delta x =1\text{\sc \footnotesize E}{-3}$ to fall much below the width of the limit cycle.} \label{fig:limit-cycle}
\end{center}
\end{figure}

\subsection*{A three-dimensional example with an optimal limit cycle}
Proposition \ref{th:dim n=2} rules out the existence of optimal limit cycles in dimension $n=2$. Although the previous example of the three-dimensional growth-fragmention process did not exhibit limit cycles apparently, we were able to find another three-dimensional example by testing random choices of matrices with respect to the stability criterion \eqref{eq:secondPerron}:
\begin{align}\nonumber
G &=  \begin{pmatrix}
0 & 0.245 & 0.007 \\
0 & 0 & 0.141 \\
0 &0 &0 
\end{pmatrix} \\ 
F &= \begin{pmatrix}
-0.245 & 0 & 0 \\
0.272 & -0.499 & 0 \\
0.645 & 0.026 & -0.035
\end{pmatrix}\, .
\label{eq:example:new}
\end{align}
We assume as in the previous example that the control $\alpha$ takes values in $[a,A]$. The maximal Perron eigenvalue is obtained for $\alpha^*\approx 0.415$. The stability criterion \eqref{eq:secondPerron} has been checked numerically: the optimal constant control is not stable with respect to periodic perturbations at high frequency. Therefore we expect limit cycles to attract optimal trajectories in the simplex in the spirit of the Poincar\'e-Bendixson theory. This has been checked using finite-volume numerical simulations of the ergodic Hamilton-Jacobi equation \eqref{eq:ergodic HJ:intro}, see Figure \ref{fig:limit-cycle}. It is worth mentioning that a similar counter-example has been proposed in \cite{fainshil_stability_2009} to answer a question raised in \cite{gurvits_stability_2007}. Our quantitative approach based on second-order conditions provides another example. Furthermore it illustrates the rich possible dynamics of optimal trajectories. The connexion with the Poincar\'e-Bendixson theory seems appealing. However, proving that limit cycles are the only alternative to pointwise convergence seems out of reach at the moment, due to the complexity of the Poincar\'e-Bendixson theory in the case of discontinuous vector fields \cite{de_carvalho_poincare-bendixson_2013}.

\bibliographystyle{abbrv}
\bibliography{HJB}

\end{document}